\newcommand{\abs}[1]{\left| #1 \right|}
\newcommand{\okra}[1]{\left( #1 \right)}
\newcommand{\kwad}[1]{\left[ #1 \right]}
\newcommand{\klam}[1]{\left\{ #1 \right\}}
\newcommand{\floor}[1]{\left\lfloor #1 \right\rfloor}
\newcommand{\dzi}[1]{\left\langle #1 \right\rangle}
\DeclareMathOperator{\cov}{Cov}
\DeclareMathOperator{\sred}{\mathbf{E}}
\newcommand{\boole}[1]{{\bf 1}_{\klam{#1}}}
\newcommand{\stringminus}{-}
\newcommand{\subseq}{\sqsubseteq}
\newtheorem{definition}{Definition}[section]
\newtheorem{proposition}[definition]{Proposition}
\newtheorem{corollary}[definition]{Corollary}
\newtheorem{lemma}[definition]{Lemma}
\newtheorem{example}[definition]{Example}
\newenvironment*{proof}{\begin{trivlist}\item[]
\noindent\textbf{Proof:}}{$\Box$\par\end{trivlist}}
\newenvironment*{proof*}[1]{\begin{trivlist}\item[]
\noindent\textbf{Proof of #1:}}{$\Box$\par\end{trivlist}}
\author{{\L}ukasz D\k{e}bowski
  \\ 
  {\normalsize
    \url{debowski@cwi.nl} 
  }
  \\
  {\normalsize\textit{ 
      Centrum Wiskunde \& Informatica
  }}
  \\
  {\normalsize\textit{ 
      Science Park 123, NL-1098 XG Amsterdam,
  }}
  \\
  {\normalsize\textit{ 
      The Netherlands
  }}
}
\title{Variable-Length Coding of Two-Sided Asymptotically Mean
  Stationary Measures} \date{}
\begin{document}

\pagestyle{empty}   
\begin{titlepage}
\maketitle

\begin{abstract}
  We collect several observations that concern variable-length coding
  of two-sided infinite sequences in a~probabilistic setting.
  Attention is paid to images and preimages of asymptotically mean
  stationary measures defined on subsets of these sequences. We point
  out sufficient conditions under which the variable length coding and
  its inverse preserve asymptotic mean stationarity. Moreover,
  conditions for preservation of shift-invariant $\sigma$-fields and
  the finite-energy property are discussed, and the block entropies for
  stationary means of coded processes are related in some cases.
  Subsequently, we apply certain of these results to construct
  a~stationary nonergodic process with a~desired linguistic
  interpretation.
  \\[1em]
  \textbf{Key words}: asymptotically mean stationary processes,
  variable-length coding, synchronization, shift-invariant algebras,
  complete fix-free sets, finite-energy processes, block entropy
  \\[1em]
  \textbf{MSC 2000:} 60G10, 28D99, 94A45, 94A17
  \\[1em]
  \textbf{Running head}: Variable-Length Coding of Two-Sided AMS Measures
\end{abstract}

\end{titlepage}
\pagestyle{plain}

\section{Introduction}
\label{secIntro}

Let $\mathbb{X}$ and $\mathbb{Y}$ be a~pair of countable sets, called
here
alphabets. 
Fixing $\lambda$ as the empty string, denote the set of nonempty
strings over an alphabet $\mathbb{X}$ as
$\mathbb{X}^+:=\bigcup_{n\in\mathbb{N}} \mathbb{X}^n$ and the set of
all strings as $\mathbb{X}^*:=\mathbb{X}^+\cup\klam{\lambda}$. The set
of one-sided infinite sequences
${x}^{\mathbb{N}}=(x_i)_{i\in\mathbb{N}}=x_1x_2x_3...$ is written
$\mathbb{X}^{\mathbb{N}}$ and the set of two-sided
${x}^{\mathbb{Z}}=(x_i)_{i\in\mathbb{Z}}=...x_{-1}x_{0}\bm{.}x_1x_2...$
is denoted by $\mathbb{X}^{\mathbb{Z}}$. (Mind the
bold-face dot between the $0$-th and the first symbol.) Shorthands
$x^n:=(x_i)_{1\le i\le n}$ and $x_{k}^{l}:=(x_i)_{k\le i\le l}$ denote
substrings, whereas $\abs{x}$ is the length of a~string $x$.

Subsequently, consider a~function
$f:\mathbb{X}\rightarrow\mathbb{Y}^*$ that maps single symbols into
strings. We will extend it to
$f^*:\mathbb{X}^*\rightarrow\mathbb{Y}^*$,
$f^{\mathbb{N}}:\mathbb{X}^{\mathbb{N}}\rightarrow
\mathbb{Y}^{\mathbb{N}}\cup\mathbb{Y}^*$, and
$f^{\mathbb{Z}}:\mathbb{X}^{\mathbb{Z}}\rightarrow
\mathbb{Y}^{\mathbb{Z}}\cup(\mathbb{Y}^*\times\mathbb{Y}^*)$ defined
as
\begin{align}
  \label{Extension}
  f^*(x^n)&:= f(x_1)f(x_2)...f(x_n) ,
  \\
  \label{OneInfExtension}
  f^{\mathbb{N}}({x}^{\mathbb{Z}})&:=
  f(x_1)f(x_2)f(x_3)...
  ,
  \\
  \label{InfExtension}
  f^{\mathbb{Z}}({x}^{\mathbb{Z}})&:=
  ... f(x_{-1})f(x_{0})\bm{.}f(x_1)f(x_2)...
  ,
\end{align}
where $x_i\in\mathbb{X}$. These extensions are known in literature
under several names, such as ``variable-length coding'' \cite{Salomon07}
or ``sequence morphisms'' \cite{AlloucheShallit03}.
The finite extension (\ref{Extension}) plays a~fundamental role in the
definition of instantaneous codes in information theory
\cite{CoverThomas91}. On the other hand, probabilistic analyses that
involve strong laws and ergodic theorems necessarily operate on
infinite sequences, cf., e.g.,
\cite{Shields96,Kallenberg97,Krengel85}. For these analyses,
extensions (\ref{OneInfExtension}) and (\ref{InfExtension}) seem more
natural, and the variable-length coding (\ref{InfExtension}) has been
discussed by communication engineers for a~few decades
\cite{CariolaroPierobon77,GrayKieffer80,TimoBlackmoreHanlen2007}.
%

Fix a~sufficiently rich probability space $(\Omega,\mathcal{J},P)$,
and let $(\mathbb{X}^{\mathbb{Z}},\mathcal{X}^{\mathbb{Z}})$ and
$(\mathbb{Y}^{\mathbb{Z}},\mathcal{Y}^{\mathbb{Z}})$ denote the
standard measurable spaces of two-sided infinite sequences.  Let us
consider a~``shrunk'' stochastic process
$(X_i)_{i\in\mathbb{Z}}:(\Omega,\mathcal{J})\rightarrow
(\mathbb{X}^{\mathbb{Z}},\mathcal{X}^{\mathbb{Z}})$ and
an~``expanded'' process
$(Y_i)_{i\in\mathbb{Z}}:(\Omega,\mathcal{J})\rightarrow
(\mathbb{Y}^{\mathbb{Z}},\mathcal{Y}^{\mathbb{Z}})$ related through an
almost sure equality
\begin{align}
  \label{DefY}
  (Y_i)_{i\in\mathbb{Z}}=f^{\mathbb{Z}}((X_i)_{i\in\mathbb{Z}})
  ,
\end{align}
assuming that $\lim_n \abs{f^*(X_{-m}^n)}=\lim_m
\abs{f^*(X_{-m}^n)}=\infty$ almost surely.  Throughout the article,
the distributions of these processes will be written as
\begin{align}
  \mu=P((X_i)_{i\in\mathbb{Z}}\in\cdot)
  \quad
  \text{ and }
  \quad
  \nu=P((Y_i)_{i\in\mathbb{Z}}\in\cdot)=\mu\circ \okra{f^{\mathbb{Z}}}^{-1}
  .
\end{align}

Having denoted the shift operation as
$T({x}^{\mathbb{Z}}):=...x_{0}x_{1}\bm{.}x_2x_3...=(x_{i+1})_{i\in\mathbb{Z}}$,
a~measure $\mu$ on
$(\mathbb{X}^{\mathbb{Z}},\mathcal{X}^{\mathbb{Z}})$ is called
asymptotically mean stationary (AMS) if the limits
\begin{align}  
  \label{BarMu}
  \bar\mu(A)=\lim_{n\rightarrow\infty} \frac{1}{n}  \sum_{i=0}^{n-1}
  \mu\circ T^{-i}(A)
\end{align}
exist for all $A\in\mathcal{X}^{\mathbb{Z}}$, see
\cite{GrayKieffer80,Krengel85}. The limit $\bar\mu$, if it exists as
as a~total function $\mathcal{X}^{\mathbb{Z}}\rightarrow \mathbb{R}$,
forms a~stationary measure on
$(\mathbb{X}^{\mathbb{Z}},\mathcal{X}^{\mathbb{Z}})$, i.e.,
$\bar\mu\circ T^{-1}=\bar\mu$, and is called the stationary mean of
$\mu$.  It is possible that limits (\ref{BarMu}) exist for
a~computable measure $\mu$ and all cylinder sets but they do not exist
for some other sets, see Example \ref{exChampernowne} later.

As we shall show, under mild conditions, the transported measure
$\nu=\mu\circ \okra{f^{\mathbb{Z}}}^{-1}$ is AMS if $\mu$ is AMS.
A~weaker proposition, assuming a~stationary $\mu$, was obtained in
\cite[Example 6]{GrayKieffer80}.  Besides coding theory, stationary
means $\overline{\mu\circ \okra{f^{\mathbb{Z}}}^{-1}}$ of
variable-length coded measures appear in disguise in statistical
applications such as length-biased sampling \cite{Cox62} or
philosophical probabilistic puzzles such as the Sleeping Beauty
problem \cite{Elga00}.

An application at the interface between information theory and
linguistics has drawn our attention to the question whether a~few
specific properties of a~stationary measure $\mu$ can be
simultaneously preserved by the stationary mean $\overline{\mu\circ
  \okra{f^{\mathbb{Z}}}^{-1}}$ for a~certain injection
$f^{\mathbb{Z}}:\mathbb{X}\rightarrow\mathbb{Y}^*$, where $\mathbb{X}$
is infinite, and $\mathbb{Y}$ is finite. In this article, we gather
several results of independent interest that concern partly relaxed
and partly more general cases of our original problem.  The question
that stimulated our research will be presented at the end of this
section
and answered in positive later.

We shall not discuss measures on one-sided sequences, see
\cite{Rechard56,GrayKieffer80}, since they do not arise naturally in
the methods and applications considered here. However, there appear
a~few more specific conditions on the coding function
$f^{\mathbb{Z}}$, which appeal to two-sidedness of coded sequences.
The first condition has to do with various concepts of
synchronization, see
\cite{Stiffler71,CapocelliOther92,AhlswedeOther05}.
\begin{definition}
  A~function
  $\pi:\mathbb{X}^{\mathbb{Z}}\rightarrow\mathbb{Y}^{\mathbb{Z}}$ is
  called a~\emph{synchronizable injection} if $\pi$ is an injection
  and $T^i\pi({x}^{\mathbb{Z}})=\pi({b}^{\mathbb{Z}})$ for an
  $i\in\mathbb{Z}$ implies $T^j{x}^{\mathbb{Z}}={b}^{\mathbb{Z}}$ for
  some $j\in\mathbb{Z}$.
\end{definition} 
For example, $f^{\mathbb{Z}}$ is a~synchronizable injection for
a~comma-separated code $f(x)=g(x)c$, where $c\in\mathbb{Y}^m$, and
$g:\mathbb{X}\rightarrow (\mathbb{Y}^m\setminus\klam{c})^*$ is an
injection.

Other conditions considered are more local.  Let us recall that
a~set of strings $\mathcal{L}\subset \mathbb{Y}^*$ is called
(i) \emph{prefix-free} if $w\neq zs$ for $w,z\in \mathcal{L}$ and
$s\in \mathbb{Y}^+$,
(ii) \emph{suffix-free} if $w\neq sz$ for $w,z\in \mathcal{L}$ and
$s\in \mathbb{Y}^+$,
(iii) \emph{fix-free} if it is both prefix-free and suffix-free, and
(iv) \emph{complete} if it satisfies the Kraft equality $\sum_{w\in
  \mathcal{L}} \abs{\mathbb{Y}}^{-\abs{w}}=1$, where
$\abs{\mathbb{Y}}$ is the cardinality of $\mathbb{Y}$.
\begin{definition}
  A~function $f:\mathbb{X}\rightarrow\mathbb{Y}^*$ is called
  \emph{(complete) prefix/suffix/fix-free} if $f$ is an injection and
  the image $f(\mathbb{X})$ is respectively (complete)
  prefix/suffix/fix-free. For finite $f(\mathbb{X})$, $f$ is called
  finite.
\end{definition}
For instance, the set $\klam{ 01, 000, 100, 110, 111, 0010, 0011,
  1010, 1011 }$ is complete fix-free with respect to
$\mathbb{Y}=\klam{0,1}$
\cite{GillmanRivest95,AhlswedeBalkenholKhachatrian96}.  The
aforementioned comma-separated code $f(x)=g(x)c$ is prefix-free but it
is not complete.

The main results of this paper are as follows:
\begin{enumerate}
\item The measure $\mu\circ \okra{f^{\mathbb{Z}}}^{-1}$ is AMS for an AMS
  measure $\mu$, provided that the expansion rate $\bar
  l({x}^{\mathbb{Z}}):=\lim_{n} n^{-1}\sum_{i=1}^{n}\abs{f(x_i)}$ is
  in the range $(0,\infty)$ $\mu$-almost everywhere (Section
  \ref{secExpanded}). This result generalizes \cite[Example
  6]{GrayKieffer80}, where $\mu\circ \okra{f^{\mathbb{Z}}}^{-1}$ was
  shown AMS provided that $\mu$ is stationary, $f^{\mathbb{Z}}$ is an
  injection, and the $\mu$-expectation of $\bar l$ is finite.
\item The shift-invariant algebras for processes
  $(X_i)_{i\in\mathbb{Z}}$ and
  ${f^{\mathbb{Z}}}((X_i)_{i\in\mathbb{Z}})$ remain in one-to-one
  correspondence, and their distributions coincide on these algebras
  if ${f^{\mathbb{Z}}}$ is a~synchronizable injection (Section
  \ref{secInvariant}).
\item The measure $\nu\circ {f^{\mathbb{Z}}}$ is stationary or AMS 
  respectively for a~stationary or an AMS measure $\nu$ 
  if $f$ is complete fix-free (Section \ref{secShrunk}).
\item Write the cylinder sets as
  $[u]:=\klam{{x}^{\mathbb{Z}}: x^{\abs{u}}=u}$.  As defined in
  \cite{Shields97}, a~measure $\mu$ \emph{has finite energy} if
  conditional probabilities of cylinder sets are uniformly
  exponentially damped, i.e., if
\begin{align}
  \label{FE}
  \mu([uv])\le Kc^{\abs{v}} \mu([u])
\end{align}
for certain $c<1$ and $K<\infty$.  (Condition (\ref{FE}) may be only
satisfied for $c\ge \abs{\mathbb{X}}^{-1}$ and, for a~finite alphabet
$\mathbb{X}$, (\ref{FE}) implies that the length of the longest
nonoverlapping repeat in the $\mu$-distributed block of length $n$ is
almost surely bounded by $O(\log n)$ \cite{Shields97}.) We will show
that the stationary mean $\bar\mu$ has also finite energy if
(\ref{FE}) holds. Moreover, $\mu\circ \okra{f^{\mathbb{Z}}}^{-1}$
has finite energy if $\mu$ has finite energy and $f$ is finite
prefix-free (Section \ref{secEnergy}).
\item Block entropy for a~measure $\mu$ on
  $(\mathbb{X}^{\mathbb{Z}},\mathcal{X}^{\mathbb{Z}})$ is the function
  \begin{align}
    \label{BlockEntropy}
    H_\mu(i;n):= 
    - \sum_{u\in\mathbb{X}^n} 
    \mu(T^{-i}[u]) \log \mu(T^{-i}[u])
    ,
  \end{align}
  where we also use the shorthand $H_\mu(n):=H_\mu(0;n)$.  We will
  demonstrate that for a~fixed length injection
  $f:\mathbb{X}\rightarrow \mathbb{Y}^K$, a~finite $\mathbb{X}$, and
  $\nu=\mu\circ \okra{f^{\mathbb{Z}}}^{-1}$, block entropies
  $H_{\bar\mu}(n)$ and $H_{\bar\nu}(nK)$ of the stationary means do
  not differ more than a~constant (Section \ref{secEntropy}).
\end{enumerate} 

We have researched these topics while seeking for a~class of
nonergodic processes $(\bar Y_i)_{i\in\mathbb{Z}}$ that satisfy four
conditions:
\begin{itemize}
\item[(a)] $(\bar Y_i)_{i\in\mathbb{Z}}$ is a~process over a~finite
  alphabet $\mathbb{Y}=\klam{0,1,...,D-1}$,
\item[(b)] $(\bar Y_i)_{i\in\mathbb{Z}}$ is stationary,
\item[(c)] $(\bar Y_i)_{i\in\mathbb{Z}}$ has finite energy, and
\item[(d)] there exist independent equidistributed binary random
  variables $(\bar Z_k)_{k\in\mathbb{N}}$, $\bar P(\bar Z_k=z)=1/2$,
  $z\in\klam{0,1}$, measurable against the shift-invariant
  $\sigma$-field of $(\bar Y_i)_{i\in\mathbb{Z}}$ such that
  \begin{align}
    \label{UDPPowerLawY}
    \liminf_{n\rightarrow\infty} n^{-\beta}\abs{\bar U_{\bar\delta}(n)}>0
  \end{align}
  holds for a~certain $\beta\in (0,1)$, all ${\bar\delta}\in(1/2,1)$,
  and the sets $\bar U_{\bar\delta}(n):= \klam{k\in\mathbb{N}: \bar
    P\okra{\bar s_{k}\okra{\bar Y^n}=\bar Z_k}\ge {\bar\delta}}$ of
  well-predictable $\bar Z_k$'s, where functions $\bar s_{k}$ satisfy
  \begin{align}
  \label{UDPcondY}
  \lim_{n\rightarrow\infty} \bar P\okra{\bar s_{k}\okra{\bar
      Y_{i+1}^{i+n}}=\bar Z_k}=1
  ,
  \quad
  \forall i\in\mathbb{Z}
  .
\end{align}
\end{itemize}

As demonstrated in 
\cite{Debowski08c}, 
properties (a)--(d) imply a~power-law growth of the number of distinct
nonterminal symbols in the shortest grammar-based compression of the
block $\bar Y_{i+1}^{i+n}$, see \cite{KiefferYang00,CharikarOthers05}.
In a~linguistic interpretation posited in manuscript
\cite{Debowski08c}, variables $\bar Y_i$ stand for consecutive letters
of an infinitely long text, whereas the values of variables $\bar Z_i$
stand for random facts repetitively described in the text. Since
nonterminal symbols of grammar-based compressed texts in natural
language often correspond to words of a~particular language (we mean
words in the common sense of strings of letters separated by spaces),
the demonstrated implication forms a~new explanation of a power-law
growth of text vocabulary (known as Zipf's law in linguistics
\cite{Zipf65}). Precisely, the explanation takes the form of the
statement: If an $n$-letter long text describes $n^\beta$ independent
facts in a~repetitive way, then the text contains at least
$n^\beta/\log n$ different words.

Properties (b)--(d), but not (a), are satisfied by the following
process.
\begin{example}[\cite{Debowski09}]
  \label{exUDP}
  Let $(X_i)_{i\in\mathbb{Z}}$ be a~process on
  $(\Omega,\mathcal{J},P)$, where variables
\begin{align}
  \label{UDPX}
  X_i=(K_i,Z_{K_i})
\end{align}
assume values from an infinite alphabet
$\mathbb{X}=\mathbb{N}\times\klam{0,1}$, variables $K_i$ and $Z_k$ are
probabilistically independent, $K_i$ are distributed according to
a~power law, $P(K_i=k)=k^{-\alpha}/\zeta(\alpha)$,
$\alpha>1$, $\zeta(\alpha):=\sum_{k=1}^\infty k^{-\alpha}$,
and $Z_k$ are equidistributed, $P(Z_k=z)=1/2$, $z\in\klam{0,1}$. 
\end{example}

Let us write $u\subseq v$ when a~sequence or a~string $v$ contains
a~string $u$ as a~substring.  For
$\mathbb{X}=\mathbb{N}\times\klam{0,1}$ and
$v\in\mathbb{X}^{\mathbb{Z}}\cup\mathbb{X}^*$, define the predictors
\begin{align*}
  s_{k}(v):=
  \begin{cases}
    0 & \text{if $(k,0)\subseq v$ and $(k,1)\not\subseq v$}, \\
    1 & \text{if $(k,1)\subseq v$ and $(k,0)\not\subseq v$}, \\
    2 & \text{else}.
  \end{cases}
\end{align*}
Variables $Z_k$ are measurable against
the shift-invariant $\sigma$-field of $(X_i)_{i\in\mathbb{Z}}$ since
they satisfy $Z_k=s_k((X_i)_{i\in\mathbb{Z}})$ almost
surely. Moreover, 
\begin{align}
  \label{UDPcondX}
  \lim_{n\rightarrow\infty} P\okra{s_{k}\okra{X_{i+1}^{i+n}}=Z_k}=1
  ,
  \quad
  i\in\mathbb{Z}
  ,
\end{align}
and
\begin{align}
  \label{UDPPowerLawX}
  \abs{U_\delta(n)}\ge \kwad{\frac{n}{-\zeta(\alpha)\log(1-\delta)}}^{1/\alpha}
\end{align}
for $\delta\in(1/2,1)$ and $U_\delta(n):= \klam{k\in\mathbb{N}:
  P\okra{s_{k}\okra{X^{n}}=Z_k}\ge \delta}$, as shown in
\cite{Debowski08c}.

We have supposed that a~suitable distribution over a~finite alphabet
can be constructed as the stationary mean of a~certain encoding of the
process (\ref{UDPX}). The results of Sections \ref{secExpanded}
through \ref{secEntropy} suggest the following statement:
\begin{proposition}
  \label{theoConj}
  Let $\mu=P((X_i)_{i\in\mathbb{Z}}\in\cdot)$ be the distribution of
  the process from Example \ref{exUDP} and put
  $\mathbb{Y}=\klam{0,1,2}$. Consider the coding function
  $f:\mathbb{X}\mapsto \mathbb{Y}^+$ given as
  \begin{align}
    \label{ConjCode}
    f(k,z)=b(k)z2
    ,
  \end{align}
  where $1b(k)\in\klam{0,1}^+$ is the binary representation of
  a~natural number $k$. The process $(\bar Y_i)_{i\in\mathbb{Z}}$
  distributed according to the stationary mean $\bar P((\bar
  Y_i)_{i\in\mathbb{Z}}\in\cdot)=\overline{\mu\circ
    \okra{f^{\mathbb{Z}}}^{-1}}$ satisfies conditions (a)--(d) for
  $\beta=\alpha^{-1}$ and $\zeta(\alpha)>4$. Variables
  $\bar Z_k$ may be constructed as $\bar Z_k=\bar s_k((\bar
  Y_i)_{i\in\mathbb{Z}})$, where
  \begin{align}
    \bar s_k(w)
    :=
    \begin{cases}
      0 & \text{if $2b(k)02\subseq w$ 
        and $2b(k)12\not\subseq w$}, \\
      1 & \text{if $2b(k)12\subseq w$ 
        and $2b(k)02\not\subseq w$}, \\
      2 & \text{else}
    \end{cases} 
  \end{align}
  for $w\in\mathbb{Y}^{\mathbb{Z}}\cup\mathbb{Y}^*$.
\end{proposition}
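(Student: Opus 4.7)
The plan is to verify each of conditions~(a)--(d) by specialising results~(i)--(iv) of the introduction to the code $f(k,z)=b(k)z2$. First, $(X_i)$ is exchangeable, so $\mu$ is stationary, and the expansion rate equals $\bar l=\sred\abs{f(X_1)}=2+\sred\floor{\log_2 K_1}$, which is finite because $\sum_k k^{-\alpha}\log k<\infty$ for $\alpha>1$. Hence by result~(i), $\nu=\mu\circ\okra{f^{\mathbb{Z}}}^{-1}$ is AMS and the stationary mean $\bar\nu$ exists on $\klam{0,1,2}^{\mathbb{Z}}$, giving~(a) and~(b).

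For~(d), I would first observe that $f$ is a~comma-separated code with separator~$2$: the letter~$2$ appears in $f(k,z)$ only at the final position and never inside $b(k)z\in\klam{0,1}^+$. Thus $f^{\mathbb{Z}}$ is a~synchronizable injection and, by result~(ii), the shift-invariant $\sigma$-fields of $X$ and $Y$ are in one-to-one correspondence with matching distributions. Under this bijection the shift-invariant variable $Z_k=s_k(X)$ is mapped to $\bar s_k(\bar Y)$, because the pattern $2b(k)z2$ can appear in $\bar Y$ only as a~genuine codeword $f(k,z)$ preceded by the terminating~$2$ of the previous codeword (the letter~$2$ being absent from $b(k)z$). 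Consequently $\bar Z_k=\bar s_k(\bar Y)$ is measurable with respect to the shift-invariant $\sigma$-field of $\bar Y$ and, under $\bar P$, has the same i.i.d.\ Bernoulli$(1/2)$ distribution as $(Z_k)_k$ under $P$. Condition~\eqref{UDPcondY} is then inherited from~\eqref{UDPcondX} through the same correspondence, while \eqref{UDPPowerLawY} with $\beta=1/\alpha$ follows from~\eqref{UDPPowerLawX}: a~stationary-mean window $\bar Y^n$ corresponds on average to $\Theta(n/\bar l)$ consecutive letters of $X$, yielding $\abs{\bar U_{\bar\delta}(n)}\ge\kwad{n/(C\bar l)}^{1/\alpha}$ for a~suitable constant $C=C(\bar\delta)$.

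The principal obstacle is condition~(c): since $f$ is not finite, the prefix-free part of result~(iv) cannot be invoked directly. My plan is to prove finite energy for~$\nu$ by a~direct estimate and then use the stationary-mean part of result~(iv) to pass finite energy to~$\bar\nu$. Parsing uniquely $u=f(x_1)\cdots f(x_m)p$ and $uv=f(x_1)\cdots f(x_{m'})p'$ (with $p,p'$ proper codeword prefixes), the ratio $\nu([uv])/\nu([u])$ equals the conditional probability of completing $f(x_{m+1})$ from~$p$, reading the further complete codewords $f(x_{m+2}),\ldots,f(x_{m'})$, and starting $f(x_{m'+1})$ with~$p'$. Conditioning on the tail sequence $Z=(Z_k)_k$ renders the $X_i$ independent with $P(X_i=(k,z)\mid Z)=k^{-\alpha}/\zeta(\alpha)\cdot\boole{z=Z_k}$, so a~complete codeword of length~$\ell$ contributes at most $k^{-\alpha}/\zeta(\alpha)\le 2^{-\alpha(\ell-2)}/\zeta(\alpha)$; amortised per bit of~$v$ this gives a~damping factor bounded by $\max\okra{\zeta(\alpha)^{-1/2},2^{-\alpha}}<1$.

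The step I expect to be most delicate is controlling the two boundary contributions: the conditional probability that $X_{m+1}$ equals a~specific long completion of the prefix~$p$ fixed by~$u$, and the marginal probability that $X_{m'+1}$ starts with~$p'$. Both must be estimated so that a~single very long codeword straddling an~endpoint of~$v$ cannot wipe out the per-bit exponential decay; the tail estimate $P(\abs{f(X_1)}\ge\ell)\le O(2^{-\alpha\ell})$ should absorb those partial codewords into the constant~$K$ of the finite-energy inequality. It is precisely at this point that the hypothesis $\zeta(\alpha)>4$ enters, ensuring that the resulting damping ratio~$c$ remains strictly below~$1$ once both boundary terms have been folded in.
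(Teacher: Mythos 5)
Your overall strategy coincides with the paper's: (a)--(b) via the a.s.\ constant, finite expansion rate and Proposition \ref{theoBarNu}; (c) by conditioning on $(Z_k)_{k\in\mathbb{N}}$ so that the $X_i$ become i.i.d.\ with the power-law marginal, proving finite energy for that conditional law directly (the paper packages exactly this estimate as Corollary \ref{theogFEMuFENuInf} with $A=1$, which is where $\zeta(\alpha)>2^{A+1}=4$ comes from), and then passing through the mixture (Proposition \ref{theoFENuInt}) and the stationary mean (Proposition \ref{theoFENuBar}). Your reading of where $\zeta(\alpha)>4$ enters is essentially right: the per-codeword damping forces $c\ge\zeta(\alpha)^{-1/2}$, while the completion set $\mathcal{L}_w$ of a partial codeword contains about $2^{l-2}$ strings of each length $l$, so the geometric sums over completions converge only for $c<1/2$; hence one needs $\zeta(\alpha)^{-1/2}<1/2$. (One slip: $P(\abs{f(X_1)}\ge\ell)$ is $O(2^{(1-\alpha)\ell})$, not $O(2^{-\alpha\ell})$; the quantity you actually need --- the probability that a codeword extends a \emph{given} prefix of length $\ell$ --- is $O(2^{-\alpha\ell})$ after summing the roughly $2^{l-\ell}$ extensions of each length $l\ge\ell$, and this uses $\alpha>1$.)

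The genuine gap is in (d). The qualitative part (measurability of $\bar Z_k$ against the shift-invariant $\sigma$-field and the i.i.d.\ Bernoulli$(1/2)$ law, via synchronizability and the agreement of $\nu$ and $\bar\nu$ on invariant sets) is fine and matches the paper. But the quantitative step --- deducing $\bar U_{\bar\delta}(m)\supset U_{\delta}(n)$ for $n\asymp m/L$ from ``a window $\bar Y^m$ corresponds on average to $\Theta(m/L)$ letters of $X$'' --- is not a proof. Under the stationary mean, $\bar Y^m$ begins at a uniformly random offset inside a \emph{length-biased} first codeword (see (\ref{DefBarX})--(\ref{DefN})), so $\bar P\okra{\bar s_k\okra{\bar Y^m}=\bar Z_k}$ has to be lower-bounded by $\bar P\okra{s_k\okra{\bar X_2^n}=\bar Z_k,\ \bar M_n\le m}$, and one must control the event $\klam{\bar M_n\le m}$ under the size-biased law. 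The paper does this by truncating the first codeword at a level $C_a$ chosen so that $\sred_P \kwad{M_1\boole{M_1\le C_a}}\ge aL$, factoring out the first block by independence, and applying Markov's inequality to $M_n-M_1$; this is precisely what produces the degradation from $\delta$ to $\bar\delta$ (the requirement $\delta>\bar\delta/a$) and the choice $n=\floor{(\delta-\bar\delta/a)L^{-1}(m-C_a)}$. Without an argument of this kind the inclusion $U_\delta(n)\subset\bar U_{\bar\delta}(m)$, and hence (\ref{UDPPowerLawY}), remains unjustified.
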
 
This proposition is proved in the final Section \ref{secLinguistic}.
The inequality $\zeta(\beta^{-1})>4$ holds for $\beta>0.7728...$.
Mind that processes $(\bar Y_i)_{i\in\mathbb{Z}}$ and
$(X_i)_{i\in\mathbb{Z}}$ live on different probability spaces, say
$(\Omega,\mathcal{J},\bar P)$ and $(\Omega,\mathcal{J},P)$,
respectively.  Obviously, the coding function (\ref{ConjCode}) is
prefix-free, and its extension $f^{\mathbb{Z}}$ is a~synchronizable
injection.

\section{AMS measures and finite expansion rate}
\label{secExpanded}

Previous accounts of AMS measures on two-sided sequences can be found
in \cite{GrayKieffer80,FontanaGrayKieffer81}. Let us recall a~few
useful facts. First of all, for the shift
$T({x}^{\mathbb{Z}}):=...x_{0}x_{1}\bm{.}x_2x_3...=(x_{i+1})_{i\in\mathbb{Z}}$,
an AMS measure $\mu$ on
$(\mathbb{X}^{\mathbb{Z}},\mathcal{X}^{\mathbb{Z}})$ can be
equivalently characterized as such that the almost sure ergodic
theorem is satisfied, i.e., the limit $\lim_{n} n^{-1}
\textstyle\sum_{i=0}^{n-1} g\circ T^i$ exists $\mu$-almost everywhere
for every nonnegative measurable function $g:
(\mathbb{X}^{\mathbb{Z}},\mathcal{X}^{\mathbb{Z}})\rightarrow
(\mathbb{R},\mathcal{R})$ \cite[Theorem 1]{GrayKieffer80}.  Trivially,
$\bar\mu=\mu$ for a~stationary $\mu$. However, the equality
\begin{align}
  \label{EqInvar}
  \bar\mu(A)=\mu(A)
  ,
  \quad
  A\in \mathcal{I}_{\mathbb{X}}
  ,
\end{align}
is also satisfied for the $T$-invariant algebra
$\mathcal{I}_{\mathbb{X}}:=\klam{A\in \mathcal{X}^{\mathbb{Z}}:
  T^{-1}A=A}$ in the general AMS case.  This follows directly from
(\ref{BarMu}), see \cite{GrayKieffer80}. Extending the concept of
ergodicity, usually discussed for stationary measures, an AMS measure
$\mu$ is called \emph{ergodic} if $\mu(A)\in\klam{0,1}$ for all
$A\in\mathcal{I}_{\mathbb{X}}$.

The lemma below is mostly a~well known fact:
\begin{lemma}[\mbox{cf.\ \cite[Theorem 0]{FontanaGrayKieffer81}}]
  \label{theoDomin}
  A~measure $\mu$ on
  $(\mathbb{X}^{\mathbb{Z}},\mathcal{X}^{\mathbb{Z}})$ is AMS if and
  only if there exists a~stationary measure $\tau$ on
  $(\mathbb{X}^{\mathbb{Z}},\mathcal{X}^{\mathbb{Z}})$ such that
  $\tau\gg \mu$. In the latter case, we have $\tau\gg \bar\mu\gg \mu$.
  \\
  Remark: \emph{The notation $\tau\gg \mu$ stands for measure
    dominance, i.e., $\tau(A)=0$ implies that $\mu(A)=0$ for all sets
    $A$ in the domain of $\mu$.}
\end{lemma}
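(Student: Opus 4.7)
The plan is to prove the two implications separately, and to deduce the chain $\tau \gg \bar\mu \gg \mu$ from the construction performed in the $(\Leftarrow)$ direction and from a clean invariant-hull argument in the $(\Rightarrow)$ direction.

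For $(\Leftarrow)$, suppose $\tau$ is stationary with $\tau \gg \mu$, and write $h := d\mu/d\tau \in L^1(\tau)$. Since $T$ is invertible on two-sided sequences and $\tau$ is $T$-invariant, a change of variables yields $\mu(T^{-i}A) = \int 1_A \,(h \circ T^{-i})\,d\tau$. Averaging over $i=0,\ldots,n-1$ and applying the pointwise ergodic theorem for $T^{-1}$ acting on $\tau$, the sums $n^{-1}\sum_{i=0}^{n-1} h \circ T^{-i}$ converge $\tau$-a.e.\ and in $L^1(\tau)$ to $\tilde h := \sred_\tau[h \mid \mathcal{I}_{\mathbb{X}}]$. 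The $L^1$ mode passes through $\int 1_A \cdot\, d\tau$, so $\bar\mu(A) = \int 1_A\, \tilde h\,d\tau$ exists for every $A$; hence $\mu$ is AMS and $\bar\mu \ll \tau$, giving $\tau \gg \bar\mu$. For $\bar\mu \gg \mu$: if $\bar\mu(A) = 0$, then $\tilde h = 0$ $\tau$-a.e.\ on $A$; because $\tilde h$ is $\mathcal{I}_{\mathbb{X}}$-measurable, $Z := \klam{\tilde h = 0}$ is $T$-invariant and contains $A$ modulo $\tau$-nulls; then $\int_Z h\,d\tau = \int_Z \tilde h\,d\tau = 0$ together with $h \ge 0$ forces $h = 0$ $\tau$-a.e.\ on $Z$, hence on $A$, so $\mu(A) = 0$.

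For $(\Rightarrow)$, suppose $\mu$ is AMS. A telescoping from definition~(\ref{BarMu}) (the differences between $\bar\mu(T^{-1}A)$ and $\bar\mu(A)$ are boundary terms of size $O(1/n)$) shows $\bar\mu \circ T^{-1} = \bar\mu$, so $\tau := \bar\mu$ is an eligible witness provided we also prove $\bar\mu \gg \mu$. Given $A$ with $\bar\mu(A) = 0$, form the invariant hull $A^* := \bigcup_{j\in\mathbb{Z}} T^{-j} A$. Invertibility of $T$ makes $A^* \in \mathcal{I}_{\mathbb{X}}$, and countable subadditivity combined with the already-established $T$-stationarity of $\bar\mu$ yields $\bar\mu(A^*) \le \sum_{j\in\mathbb{Z}} \bar\mu(T^{-j}A) = 0$. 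The invariant-set identity~(\ref{EqInvar}) then gives $\mu(A^*) = \bar\mu(A^*) = 0$, and $A \subseteq A^*$ finishes the argument.

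The only genuinely delicate point is the upgrade from $\tau \gg \mu$ to $\bar\mu \gg \mu$. In $(\Leftarrow)$ it is settled by using $\mathcal{I}_{\mathbb{X}}$-measurability of $\tilde h$ to convert a $\tau$-null-on-$A$ statement into a statement about the invariant set $Z$; in $(\Rightarrow)$ it is settled by passing directly to the invariant hull $A^*$ and invoking~(\ref{EqInvar}). Two-sidedness is used critically: without the invertibility of $T$ the hull $\bigcup_{j\in\mathbb{Z}} T^{-j} A$ would collapse to a merely forward-closed set and the clean invariance argument would break.
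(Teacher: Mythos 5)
Your proof is correct and complete, but it is worth seeing how it sits relative to the paper, which itself proves only one piece of this lemma --- the dominance $\tau\gg\bar\mu$ --- and defers the rest to the cited Theorem 0 of Fontana, Gray and Kieffer. Your arguments for the deferred parts (the Radon--Nikodym density $h=d\mu/d\tau$ fed into Birkhoff's theorem for sufficiency, and the invariant hull $\bigcup_{j\in\mathbb{Z}}T^{-j}A$ combined with (\ref{EqInvar}) for $\bar\mu\gg\mu$) are exactly the standard route the paper alludes to; indeed the paper explicitly remarks that the hull construction is what fails in the one-sided case. Where you genuinely diverge is on $\tau\gg\bar\mu$: you extract it from the representation $\bar\mu(A)=\int_A \sred_\tau[h\mid\mathcal{I}_{\mathbb{X}}]\,d\tau$, which works but is heavier than necessary. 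The paper's argument is a one-liner requiring no density: if $\tau(A)=0$ then $\tau(T^{-i}A)=0$ for every $i$ by stationarity, hence $\mu(T^{-i}A)=0$ by $\tau\gg\mu$, hence $\bar\mu(A)=\lim_n n^{-1}\sum_{i=0}^{n-1}\mu(T^{-i}A)=0$ directly from (\ref{BarMu}). Your heavier route does buy something, namely the explicit formula for $\bar\mu$ as an $\mathcal{I}_{\mathbb{X}}$-measurable density against $\tau$, from which $\tau\gg\bar\mu$ and $\bar\mu\gg\mu$ both drop out at once, so the machinery is not wasted. One small point you pass over: the countable additivity of $\bar\mu$ (used in your subadditivity step over the hull, and needed for $\bar\mu$ to serve as the stationary witness in the necessity direction) is not immediate from setwise convergence alone; it rests on the Vitali--Hahn--Saks theorem, which the paper invokes elsewhere and builds into its standing assertion that $\bar\mu$, when total, is a stationary measure --- so this is a citation to make explicit rather than a gap.
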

The proof in \cite{FontanaGrayKieffer81} does not cover the inequality
$\tau\gg \bar\mu$. To justify it, let us observe that $\tau\gg \mu$
and $\tau(A)=0$ imply $\mu(T^{-i}A)=0$. Hence $\bar\mu(A)=0$ as well.
Moreover, the proof in \cite{FontanaGrayKieffer81} cannot be carried
to the one-sided case since it applies invariant sets of form
$\bigcup_{i\in\mathbb{Z}} T^iA$. The same trick resurfaces in
Proposition \ref{theoBarNu} below and in Section \ref{secInvariant},
where synchronizable injections are considered.

By the definition, $\bar\mu(A) =\lim_n \int \kwad{n^{-1}
  \sum_{i=0}^{n-1} \boole{T^{i}{x}^{\mathbb{Z}}\in A} }
d\mu({x}^{\mathbb{Z}})$. Hence a~useful frequency interpretation
follows by the dominated convergence.
\begin{lemma}
  \label{theoBarMu}
  For an AMS measure $\mu$ on
  $(\mathbb{X}^{\mathbb{Z}},\mathcal{X}^{\mathbb{Z}})$,
  \begin{align}
    \label{FreqBarMu}
    \bar\mu(A)
    =\int \kwad{\lim_{n\rightarrow\infty} \frac{1}{n}  \sum_{i=0}^{n-1}
      \boole{T^{i}{x}^{\mathbb{Z}}\in A} } d\mu({x}^{\mathbb{Z}})
    ,
    \quad
    A\in \mathcal{X}^{\mathbb{Z}}
    .
  \end{align}
  Remark: \emph{In the ergodic case, the integrated expression is
    almost everywhere constant. Symbol $\boole{\varphi}$ therein
    denotes the indicator function, i.e., $\boole{\varphi}:=1$ if
    $\varphi$ is true and $\boole{\varphi}:=0$ otherwise.}
\end{lemma}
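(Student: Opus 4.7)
The plan is to follow the hint already sketched in the paragraph preceding the lemma: unfold the definition of $\bar\mu(A)$ into an integral, swap the limit with the integral via dominated convergence, and invoke the AMS ergodic theorem to guarantee that the pointwise limit inside the integral exists $\mu$-almost everywhere.

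First, I would start from the definition (\ref{BarMu}) and, for each fixed $i$, rewrite $\mu\circ T^{-i}(A) = \int \boole{T^i {x}^{\mathbb{Z}}\in A}\, d\mu({x}^{\mathbb{Z}})$. By linearity of the integral, this gives
\begin{align*}
  \bar\mu(A) = \lim_{n\to\infty} \int \kwad{\frac{1}{n}\sum_{i=0}^{n-1} \boole{T^i {x}^{\mathbb{Z}}\in A}}\, d\mu({x}^{\mathbb{Z}}),
\end{align*}
which is precisely the identity noted in the text right before the lemma. The next step is to pull the limit inside the integral.

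To do this, I would apply dominated convergence: the integrand $\frac{1}{n}\sum_{i=0}^{n-1}\boole{T^i{x}^{\mathbb{Z}}\in A}$ is uniformly bounded by the constant $1$, which is $\mu$-integrable because $\mu$ is a~probability measure. The pointwise limit
\begin{align*}
  \lim_{n\to\infty} \frac{1}{n}\sum_{i=0}^{n-1}\boole{T^i{x}^{\mathbb{Z}}\in A}
\end{align*}
exists $\mu$-almost everywhere because $\mu$ is AMS: this is exactly the almost sure ergodic theorem characterization of AMS measures recalled earlier (Theorem~1 of \cite{GrayKieffer80}), applied to the nonnegative measurable function $g=\boole{\cdot\in A}$. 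Hence dominated convergence applies and yields (\ref{FreqBarMu}).

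There is essentially no obstacle here; the only nontrivial point is that the pointwise limit must exist $\mu$-a.e., and this is not automatic from (\ref{BarMu}) alone but follows from the equivalent characterization of AMS via the almost sure ergodic theorem, which has already been quoted at the beginning of Section~\ref{secExpanded}. The remark about ergodicity then follows from the observation that the limiting function is invariant under $T$ and therefore $\mu$-a.e.\ constant whenever $\mu$ is ergodic, since the $T$-invariant algebra $\mathcal{I}_{\mathbb{X}}$ is then $\mu$-trivial.
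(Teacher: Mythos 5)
Your proof is correct and follows essentially the same route as the paper, which likewise rewrites $\bar\mu(A)$ as a limit of integrals of the Cesàro averages and then applies dominated convergence. The only difference is that you make explicit the (necessary) step the paper leaves implicit, namely that the pointwise limit exists $\mu$-a.e.\ by the almost sure ergodic theorem characterization of AMS measures; this is a welcome clarification, not a deviation.
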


Now we move on to variable-length coding of an AMS measure $\mu$ on
$(\mathbb{X}^{\mathbb{Z}},\mathcal{X}^{\mathbb{Z}})$. Let
$f:\mathbb{X}\rightarrow\mathbb{Y}^*$,
$l_i({x}^{\mathbb{Z}}):=\abs{f(x_i)}$, and
$S({x}^{\mathbb{Z}},n):=\sum_{i=1}^{n} l_i({x}^{\mathbb{Z}})$. By the
ergodic theorem \cite[Theorem 9.6]{Kallenberg97} and Lemma
\ref{theoDomin}, the limit
\begin{align}
  \label{ExpRate}
  \bar l({x}^{\mathbb{Z}}):=\lim_{n\rightarrow\infty} \frac{S({x}^{\mathbb{Z}},n)}{n}
  =\sred_{\bar\mu}\okra{l_1\mid\mid
    \mathcal{I}_{\mathbb{X}}}({x}^{\mathbb{Z}})
\end{align}
exists both $\bar\mu$- and $\mu$-almost everywhere. We will call the
function $\bar l(\cdot)$ the expansion rate, whereas its expectation
will be denoted by
\begin{align}
  \label{ExpExpRate}
  L:=\int \bar l d\mu=\int \bar l d\bar\mu=\int l_1
  d\bar\mu .
\end{align}

Let $T:(y_i)_{i\in\mathbb{Z}}\mapsto (y_{i+1})_{i\in\mathbb{Z}}$ also
denote the shift $\mathbb{Y}^{\mathbb{Z}} \rightarrow
\mathbb{Y}^{\mathbb{Z}}$. Put
\begin{align*}
  F(A,k,{x}^{\mathbb{Z}})&:=\boole{T^{k} f^{\mathbb{Z}}({x}^{\mathbb{Z}})\in A}
  ,
  \\
  G(A,{x}^{\mathbb{Z}})&:=\sum_{k=0}^{\abs{f(x_1)}-1} F(A,k,{x}^{\mathbb{Z}})
  ,
\end{align*}
where $\sum_{k=0}^{-1}:=0$.
By the quasiperiodic identity
\begin{align*}
  T^{\abs{f(x_1)}}f^{\mathbb{Z}}({x}^{\mathbb{Z}})=f^{\mathbb{Z}}(T{x}^{\mathbb{Z}})
  ,
\end{align*}
we have
\begin{align}
  \label{FAshifted}
  F(T^{-1}A,k,{x}^{\mathbb{Z}})&=F(A,k+1,{x}^{\mathbb{Z}})
  ,
  \\
  \label{FXshifted}
   F(A,\abs{f(x_1)},{x}^{\mathbb{Z}})&=F(A,0,T{x}^{\mathbb{Z}})
  ,
  \\
  \label{GAshifted}
  G(T^{-1}A,{x}^{\mathbb{Z}})&=
  G(A,{x}^{\mathbb{Z}})-F(A,0,{x}^{\mathbb{Z}})+F(A,0,T{x}^{\mathbb{Z}})
  .
\end{align}

The following proposition is a~direct consequence of the above
identities and the two previous lemmas.
\begin{proposition}
  \label{theoBarNu}
  Let $\mu$ be an AMS measure on
  $(\mathbb{X}^{\mathbb{Z}},\mathcal{X}^{\mathbb{Z}})$ and suppose
  that the expansion rate (\ref{ExpRate}) is $\mu$-almost surely in
  the range $(0,\infty)$ for an $f:\mathbb{X}\rightarrow\mathbb{Y}^*$.
  Then $\nu=\mu\circ \okra{f^{\mathbb{Z}}}^{-1}$ is an AMS measure on
  $(\mathbb{Y}^{\mathbb{Z}},\mathcal{Y}^{\mathbb{Z}})$ with the
  stationary mean
  \begin{align}
    \label{FreqBarNu}
    \bar\nu(A) 
    &=\int \kwad{\lim_{n\rightarrow\infty}
      \frac{1}{S({x}^{\mathbb{Z}},n)} \sum_{k=0}^{S({x}^{\mathbb{Z}},n)-1}
      F(A,k,{x}^{\mathbb{Z}}) } 
    d\mu({x}^{\mathbb{Z}})
    \\
    &=\int \kwad{\bar l({x}^{\mathbb{Z}})}^{-1} \kwad{\lim_{n\rightarrow\infty}
      \frac{1}{n}\sum_{i=0}^{n-1} G(A,T^i{x}^{\mathbb{Z}}) } 
    d\mu({x}^{\mathbb{Z}})
    \\
    \label{FreqBarNuLast}
    &=\int \kwad{\bar l({x}^{\mathbb{Z}})}^{-1} \kwad{\lim_{n\rightarrow\infty}
      \frac{1}{n}\sum_{i=0}^{n-1} G(A,T^i{x}^{\mathbb{Z}}) } 
    d\bar\mu({x}^{\mathbb{Z}})
    ,
    \quad
    A\in \mathcal{Y}^{\mathbb{Z}}
    .
  \end{align}
\end{proposition}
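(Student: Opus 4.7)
The plan is to reduce everything to $\mu$-almost sure pointwise convergence of the Cesaro averages $N^{-1}\sum_{i=0}^{N-1} F(A,i,x^{\mathbb{Z}})$ as $N\to\infty$, and then apply dominated convergence (the summands are bounded by $1$) to identify
\begin{align*}
  \bar\nu(A) = \lim_{N\to\infty} N^{-1}\sum_{i=0}^{N-1}\nu(T^{-i}A)
  = \int \lim_{N\to\infty} N^{-1}\sum_{i=0}^{N-1} F(A,i,x^{\mathbb{Z}}) \, d\mu(x^{\mathbb{Z}}),
\end{align*}
using the obvious relation $\nu(T^{-i}A)=\int F(A,i,x^{\mathbb{Z}})\,d\mu$. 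This simultaneously establishes that $\nu$ is AMS and delivers the explicit frequency representation of $\bar\nu$.

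For the pointwise convergence, I would first treat the distinguished subsequence $N=S(x^{\mathbb{Z}},n)$. Iterating the quasiperiodic identity (\ref{FXshifted}), a block-by-block reparametrization of the range $[0,S(x^{\mathbb{Z}},n))$ into intervals $[S(x^{\mathbb{Z}},j),S(x^{\mathbb{Z}},j+1))$ yields the key identity
\begin{align*}
  \sum_{k=0}^{S(x^{\mathbb{Z}},n)-1} F(A,k,x^{\mathbb{Z}}) = \sum_{j=0}^{n-1} G(A,T^j x^{\mathbb{Z}}).
\end{align*}
Birkhoff's ergodic theorem for AMS measures (Kallenberg Theorem~9.6, the same tool used to justify (\ref{ExpRate})) applied to the bounded function $G(A,\cdot)$ produces a $\mu$-a.s.\ $T$-invariant limit $g_A(x^{\mathbb{Z}})$ of $n^{-1}\sum_{j=0}^{n-1} G(A,T^j x^{\mathbb{Z}})$, while (\ref{ExpRate}) gives $S(x^{\mathbb{Z}},n)/n\to\bar l(x^{\mathbb{Z}})\in(0,\infty)$ $\mu$-a.s. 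Dividing the identity by $S(x^{\mathbb{Z}},n)$ and combining the two limits recovers convergence to $g_A(x^{\mathbb{Z}})/\bar l(x^{\mathbb{Z}})$, which is (\ref{FreqBarNu}) and the middle line. Extending to arbitrary $N$ requires sandwiching $S(x^{\mathbb{Z}},n)\le N<S(x^{\mathbb{Z}},n+1)$; the discrepancy between the partial sums is at most $\abs{f(x_{n+1})}=S(x^{\mathbb{Z}},n+1)-S(x^{\mathbb{Z}},n)$, and dividing by $N$ this vanishes because the hypothesis $\bar l\in(0,\infty)$ forces $\abs{f(x_{n+1})}/S(x^{\mathbb{Z}},n)=S(x^{\mathbb{Z}},n+1)/S(x^{\mathbb{Z}},n)-1\to 0$.

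Finally, the equality (\ref{FreqBarNuLast}) rewriting $d\mu$ as $d\bar\mu$ holds because the integrand $\bar l^{-1}\lim_n n^{-1}\sum_{i=0}^{n-1} G(A,T^i\cdot)$ is $T$-invariant by construction; by (\ref{EqInvar}) combined with monotone convergence through simple $T$-invariant approximations, every nonnegative $T$-invariant measurable function integrates identically against $\mu$ and $\bar\mu$. I expect the main technical nuisance to be the interpolation step extending convergence from the subsequence $S(x^{\mathbb{Z}},n)$ to all $N$, as it is exactly there that both the positivity and finiteness hypotheses on $\bar l$ are consumed, clarifying why the proposition demands $\bar l\in(0,\infty)$ rather than merely $\bar l<\infty$ as in \cite[Example 6]{GrayKieffer80}.
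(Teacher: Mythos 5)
Your proof is correct, but it takes a genuinely different route from the paper's. You establish AMS-ness of $\nu$ directly, by proving $\mu$-almost sure convergence of $N^{-1}\sum_{k=0}^{N-1}F(A,k,\cdot)$ as $N\to\infty$ through the block identity $\sum_{k=0}^{S({x}^{\mathbb{Z}},n)-1}F(A,k,{x}^{\mathbb{Z}})=\sum_{j=0}^{n-1}G(A,T^j{x}^{\mathbb{Z}})$, the AMS ergodic theorem applied to $G(A,\cdot)$, and a sandwich between $N=S({x}^{\mathbb{Z}},n)$ and $N=S({x}^{\mathbb{Z}},n+1)$; bounded convergence then identifies $\bar\nu$. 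The paper never interpolates: it defines $\tau(A)$ as the right-hand side of (\ref{FreqBarNu}), checks that $\tau$ is a stationary measure (dominated convergence plus Vitali--Hahn--Saks), proves $\tau\gg\nu$ by the two-sided invariant-set trick $B=\bigcup_{i\in\mathbb{Z}}T^iA$ (on which $\tau$ and $\nu$ agree by the frequency formula), concludes that $\nu$ is AMS from Lemma \ref{theoDomin}, and only then identifies $\bar\nu=\tau$ via Lemma \ref{theoBarMu} --- the point being that once $\nu$ is known to be AMS, the full-sequence Cesaro limit automatically agrees with its limit along the subsequence $S({x}^{\mathbb{Z}},n)$, so no sandwich estimate is needed. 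Your route is more self-contained and makes explicit exactly where positivity and finiteness of $\bar l$ are both consumed (in the interpolation step), whereas the paper's route reuses its domination machinery and exploits two-sidedness through the invariant set $\bigcup_{i\in\mathbb{Z}}T^iA$. One cosmetic slip on your side: $G(A,\cdot)$ is not bounded when $f(\mathbb{X})$ contains arbitrarily long strings (it is only dominated by $l_1=\abs{f(x_1)}$); but the AMS ergodic theorem you invoke applies to arbitrary nonnegative measurable functions, and finiteness of the resulting limit follows from $G(A,\cdot)\le l_1$ together with $\bar l<\infty$, so nothing breaks.
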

\begin{proof}
  The transported measure $\nu$ is a~measure on
  $(\mathbb{Y}^{\mathbb{Z}},\mathcal{Y}^{\mathbb{Z}})$ if and only if
  $\lim_n S({x}^{\mathbb{Z}},n)=\infty$ $\mu$-almost surely. This
  condition is satisfied.  Observe that the limits in the brackets in
  (\ref{FreqBarNu})--(\ref{FreqBarNuLast}) exist $\mu$- and
  $\bar\mu$-almost surely. Consequently, the integrals are equal.
  Denote the right-hand side of (\ref{FreqBarNu}) as $\tau(A)$.  The
  function $\tau$ is a~stationary measure on
  $(\mathbb{Y}^{\mathbb{Z}},\mathcal{Y}^{\mathbb{Z}})$ by the
  dominated convergence and the Vitali-Hahn-Saks theorem. Suppose that
  there exists a~set $A\in \mathcal{Y}^{\mathbb{Z}}$ such that
  $\nu(A)>\tau(A)=0$.  Then we would have $\nu(B)>\tau(B)=0$ for
  $B=\bigcup_{i\in\mathbb{Z}} T^iA$. But $B$ is shift-invariant, so
  $\tau(B)=\nu(B)$ by formula (\ref{FreqBarNu}). Thus, our assumption
  was false, and we rather have $\tau\gg \nu$.  Hence $\nu$ is AMS in
  view of Lemma \ref{theoDomin}.  Moreover, $\tau$ coincides with the
  expression for $\bar\nu$ given by Lemma \ref{theoBarMu}.
\end{proof}

\begin{corollary}
  \label{theoBarBarII}
  If the expansion rate (\ref{ExpRate}) is $\mu$-almost surely in the
  range $(0,\infty)$ and $\bar\mu = \bar\tau$ for two AMS measures
  $\mu$ and $\tau$, then
   \begin{align*}
     \overline{\mu\circ \okra{f^{\mathbb{Z}}}^{-1}}=
     \overline{\tau\circ \okra{f^{\mathbb{Z}}}^{-1}} 
     .
   \end{align*}
\end{corollary}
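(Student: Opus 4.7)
The plan is to invoke Proposition~\ref{theoBarNu} twice, once for $\mu$ and once for $\tau$, and exploit the fact that the final formula (\ref{FreqBarNuLast}) for the transported stationary mean is expressed as an integral with respect to $\bar\mu$ (respectively $\bar\tau$). Since $\bar\mu=\bar\tau$ by hypothesis, the two transported stationary means must coincide.

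Before doing this, I have to check that the hypothesis on the expansion rate is also satisfied $\tau$-almost everywhere, so that Proposition~\ref{theoBarNu} is genuinely applicable to $\tau$. The key observation is that by (\ref{ExpRate}) the function $\bar l$ is (a version of) a conditional expectation with respect to $\mathcal{I}_{\mathbb{X}}$, so I may choose $\bar l$ to be $\mathcal{I}_{\mathbb{X}}$-measurable. Hence the set $E:=\klam{{x}^{\mathbb{Z}}\midcolon \bar l({x}^{\mathbb{Z}})\in(0,\infty)}$ belongs to $\mathcal{I}_{\mathbb{X}}$. Identity (\ref{EqInvar}) applied to $\mu$ gives $\bar\mu(E)=\mu(E)=1$; applied to $\tau$ and combined with $\bar\mu=\bar\tau$, it gives $\tau(E)=\bar\tau(E)=\bar\mu(E)=1$. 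Thus the hypothesis of Proposition~\ref{theoBarNu} holds for $\tau$ as well.

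With Proposition~\ref{theoBarNu} applicable to both AMS measures, I apply formula (\ref{FreqBarNuLast}) to each. For any $A\in\mathcal{Y}^{\mathbb{Z}}$ this yields
\begin{align*}
  \overline{\mu\circ \okra{f^{\mathbb{Z}}}^{-1}}(A)
  =\int \kwad{\bar l({x}^{\mathbb{Z}})}^{-1} \kwad{\lim_{n\rightarrow\infty}
      \frac{1}{n}\sum_{i=0}^{n-1} G(A,T^i{x}^{\mathbb{Z}}) }
    d\bar\mu({x}^{\mathbb{Z}})
\end{align*}
and the analogous expression with $\bar\mu$ replaced by $\bar\tau$ for $\overline{\tau\circ \okra{f^{\mathbb{Z}}}^{-1}}(A)$. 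The integrand is a function of the point ${x}^{\mathbb{Z}}$ alone, not of the underlying AMS measure, so the equality $\bar\mu=\bar\tau$ forces the two integrals to agree.

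The only real obstacle is the bookkeeping in the first step: one must pick a version of $\bar l$ that is truly $\mathcal{I}_{\mathbb{X}}$-measurable (so that $E$ is exactly $T$-invariant, not merely up to a null set) and then transport the ``almost everywhere'' statement along the chain $\mu\to\bar\mu=\bar\tau\to\tau$ via (\ref{EqInvar}). Once that invariance argument is in place, the corollary is an immediate rewriting using the integral formula already supplied by Proposition~\ref{theoBarNu}.
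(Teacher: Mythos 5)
Your proof is correct and follows exactly the route the paper intends: the corollary is an immediate consequence of formula (\ref{FreqBarNuLast}) of Proposition \ref{theoBarNu}, whose integrand depends only on the point ${x}^{\mathbb{Z}}$ and which integrates against the common stationary mean $\bar\mu=\bar\tau$. Your additional check that the expansion-rate hypothesis transfers from $\mu$ to $\tau$ via the $T$-invariance of $\klam{\bar l\in(0,\infty)}$ and (\ref{EqInvar}) is a detail the paper leaves implicit, and it is handled correctly.
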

Putting $\tau=\bar\mu$, we obtain $\bar\nu=\overline{\bar\mu\circ
  \okra{f^{\mathbb{Z}}}^{-1}}$ and $\bar\nu\gg \bar\mu\circ
\okra{f^{\mathbb{Z}}}^{-1}$ for $\nu=\mu\circ
\okra{f^{\mathbb{Z}}}^{-1}$.

Under much stronger assumptions, there exists a~``finite-sum''
expression for the stationary mean $\bar\nu$ in terms of $\bar\mu$,
noticed by Kieffer and Gray for $\bar\mu=\mu$ \cite[Example
6]{GrayKieffer80}. Namely, we can construct a~stationary measure
$\rho$ by averaging the stationary mean $\bar\mu\circ
\okra{f^{\mathbb{Z}}}^{-1}$ over a~randomized shift within the
quasiperiod $\abs{f(x_1)}$. This idea is more generic, see
\cite{CariolaroPierobon77,Hurd74}.
\begin{proposition}
  \label{theoQuasiMean} 
  Let $\mu$ be an AMS measure on
  $(\mathbb{X}^{\mathbb{Z}},\mathcal{X}^{\mathbb{Z}})$ and suppose
  that the expected expansion rate (\ref{ExpExpRate}) is in the range
  $(0,\infty)$ for an $f:\mathbb{X}\rightarrow\mathbb{Y}^*$. Then
  there exists a~stationary measure
  \begin{align}
    \label{QuasiMean}
    \rho(A)=\frac{1}{L}\int G(A,{x}^{\mathbb{Z}})
    d\bar\mu({x}^{\mathbb{Z}})
    ,
    \quad
    A\in \mathcal{Y}^{\mathbb{Z}}
    .
  \end{align}
  Remark: \emph{Under the above assumptions, the expansion rate $\bar
    l(\cdot)$ may vanish on a~set of positive measure.}
\end{proposition}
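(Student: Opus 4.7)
The plan is to verify, in order: (a) that $\rho$ is a well-defined finite measure on $\mathcal{Y}^{\mathbb{Z}}$, (b) that it has total mass $1$, and (c) that it is $T$-invariant. All three reduce to mechanical applications of the identities (\ref{FAshifted})--(\ref{GAshifted}) together with the stationarity of $\bar\mu$ (guaranteed by Lemma \ref{theoDomin}), so I do not expect any genuine obstacle.

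For (a), note that for each fixed $x^{\mathbb{Z}}$ the set function $A \mapsto G(A, x^{\mathbb{Z}})$ is the sum of $\abs{f(x_1)}$ Dirac masses at the shifted sequences $T^k f^{\mathbb{Z}}(x^{\mathbb{Z}})$, $0 \le k < \abs{f(x_1)}$, hence a finite nonnegative measure on $\mathcal{Y}^{\mathbb{Z}}$ that depends measurably on $x^{\mathbb{Z}}$. Integrating against $\bar\mu$ and invoking monotone convergence to commute the integral with disjoint countable unions yields $\sigma$-additivity of $\rho$. For (b), setting $A = \mathbb{Y}^{\mathbb{Z}}$ gives $G(\mathbb{Y}^{\mathbb{Z}}, x^{\mathbb{Z}}) = \abs{f(x_1)} = l_1(x^{\mathbb{Z}})$, and hence $\rho(\mathbb{Y}^{\mathbb{Z}}) = L^{-1} \int l_1 \, d\bar\mu = 1$ by the definition (\ref{ExpExpRate}) of $L$.

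For (c), I would substitute the cocycle identity (\ref{GAshifted}) into $\rho(T^{-1}A) = L^{-1} \int G(T^{-1}A, x^{\mathbb{Z}}) \, d\bar\mu(x^{\mathbb{Z}})$, split the resulting integral by linearity, and then observe that stationarity $\bar\mu \circ T^{-1} = \bar\mu$ implies $\int F(A, 0, T x^{\mathbb{Z}}) \, d\bar\mu(x^{\mathbb{Z}}) = \int F(A, 0, x^{\mathbb{Z}}) \, d\bar\mu(x^{\mathbb{Z}})$, so the two boundary terms cancel and $\rho(T^{-1}A) = \rho(A)$. This is effectively a one-line telescoping argument.

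The only technicality worth flagging is that under the weaker hypothesis $L \in (0, \infty)$ the expansion rate $\bar l$ may vanish on a set of positive $\bar\mu$-measure (as the remark warns), so $f^{\mathbb{Z}}(x^{\mathbb{Z}})$ need not always produce a two-sided infinite sequence. I would handle this by adopting the convention $F(A, k, x^{\mathbb{Z}}) := 0$ whenever $T^k f^{\mathbb{Z}}(x^{\mathbb{Z}}) \notin \mathbb{Y}^{\mathbb{Z}}$; the identities (\ref{FAshifted})--(\ref{GAshifted}) persist under this convention, and the integrands $F(A, 0, \cdot)$, $F(A, 0, T\cdot)$, and $G(A, \cdot)$ remain bounded by $l_1$, which is $\bar\mu$-integrable by (\ref{ExpExpRate}). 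This is bookkeeping rather than a substantive difficulty.
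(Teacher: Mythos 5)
Your proposal is correct and follows essentially the same route as the paper's proof: total mass $1$ via $G(\mathbb{Y}^{\mathbb{Z}},\cdot)=l_1$ and (\ref{ExpExpRate}), countable additivity by a convergence theorem (the paper uses dominated convergence where you use monotone convergence, an immaterial difference for nonnegative summands), and stationarity by substituting (\ref{GAshifted}) and cancelling the two boundary terms using $\bar\mu\circ T^{-1}=\bar\mu$. Your closing remark about degenerate images is a sensible extra precaution the paper leaves implicit; note only that to keep $G(\mathbb{Y}^{\mathbb{Z}},\cdot)=l_1$ $\bar\mu$-a.e.\ under your convention one should observe that, by the ergodic theorem for the stationary $\bar\mu$, almost every $x^{\mathbb{Z}}$ with $l_1(x^{\mathbb{Z}})>0$ lies in an invariant set where $\bar l>0$ and hence has a genuinely two-sided image.
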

\begin{proof} 
  Stationarity of $\rho$ was discussed in \cite{GrayKieffer80} for an
  injective $f^{\mathbb{Z}}$. The following proof is more general.
  First of all, we have $\rho(\mathbb{Y}^{\mathbb{Z}})=1$, whereas the
  countable additivity follows by the dominated convergence
  theorem. As for stationarity, we obtain
  \begin{align*}
    \rho(T^{-1}A)-\rho(A)
    &=
    \textstyle
    L^{-1}\int 
    \okra{F(A,0,T{x}^{\mathbb{Z}})-F(A,0,{x}^{\mathbb{Z}})}d\bar\mu ({x}^{\mathbb{Z}})=0
  \end{align*}
  from (\ref{GAshifted}) and $\bar\mu\circ T^{-1}=\bar\mu$.
\end{proof}

Although $\rho$ does not necessarily equal $\bar\nu$, it dominates the
measure $\nu$.
\begin{corollary}
  \label{theoQuasiMeanCol}
  Suppose that the hypothesis of Proposition \ref{theoQuasiMean} holds
  true and $\nu=\mu\circ \okra{f^{\mathbb{Z}}}^{-1}$ is a~measure on
  $(\mathbb{Y}^{\mathbb{Z}},\mathcal{Y}^{\mathbb{Z}})$. Then $\nu$ is
  AMS and $\rho \gg \bar\nu,\bar\mu\circ \okra{f^{\mathbb{Z}}}^{-1}
  \gg \nu$.
\end{corollary}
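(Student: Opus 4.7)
My strategy is to reduce the whole corollary to proving the single domination $\rho \gg \nu$. Granting this, Lemma \ref{theoDomin} immediately implies that $\nu$ is AMS and that $\rho \gg \bar\nu \gg \nu$. Rerunning the same argument with $\bar\mu \circ \okra{f^{\mathbb{Z}}}^{-1}$ in place of $\nu$ will give $\rho \gg \bar\mu \circ \okra{f^{\mathbb{Z}}}^{-1}$, and the remaining link $\bar\mu \circ \okra{f^{\mathbb{Z}}}^{-1} \gg \mu \circ \okra{f^{\mathbb{Z}}}^{-1} = \nu$ follows at once from $\bar\mu \gg \mu$ (Lemma \ref{theoDomin} applied to $\mu$).

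The main obstacle, as highlighted by the remark, is that $\bar l$ may vanish on a set of positive $\mu$-measure; concretely, the naive bound $G(A, x^{\mathbb{Z}}) \geq F(A, 0, x^{\mathbb{Z}}) = \boole{f^{\mathbb{Z}}(x^{\mathbb{Z}}) \in A}$ breaks down on $\klam{\abs{f(x_1)} = 0}$, so one cannot directly infer $\nu(A) = 0$ from $\int G(A, \cdot)\, d\bar\mu = 0$. My workaround is to pass to the shift-saturation $B := \bigcup_{j \in \mathbb{Z}} T^{-j} A$, which is $T$-invariant. Invariance forces $F(B, k, x^{\mathbb{Z}}) = F(B, 0, x^{\mathbb{Z}})$ for every $k$, so the definitions of $F$ and $G$ yield the clean identity
\begin{align*}
  G(B, x^{\mathbb{Z}}) = \abs{f(x_1)} \cdot \boole{f^{\mathbb{Z}}(x^{\mathbb{Z}}) \in B}.
\end{align*}

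Assuming now $\rho(A) = 0$, countable subadditivity and stationarity of $\rho$ give $\rho(B) \leq \sum_{j \in \mathbb{Z}} \rho(T^{-j} A) = 0$, so by (\ref{QuasiMean}) the above product vanishes $\bar\mu$-a.e. Shift-invariance of $\bar\mu$, applied to this relation for each $T^{j} x^{\mathbb{Z}}$, promotes the conclusion to: for $\bar\mu$-a.e.\ $x^{\mathbb{Z}}$ and every $j \in \mathbb{Z}$, either $\abs{f(x_{j+1})} = 0$ or $f^{\mathbb{Z}}(x^{\mathbb{Z}}) \notin B$ (the second case being independent of $j$ because $B$ is shift-invariant). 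The standing hypothesis $\lim_n \abs{f^*(X_{-m}^n)} = \lim_m \abs{f^*(X_{-m}^n)} = \infty$ $\mu$-a.s.\ rules out the first alternative holding for every $j$: the exceptional event is shift-invariant, so its $\bar\mu$- and $\mu$-measures coincide by (\ref{EqInvar}). Hence $\bar\mu \circ \okra{f^{\mathbb{Z}}}^{-1}(B) = 0$, and $\bar\mu \gg \mu$ then gives $\nu(A) \leq \nu(B) = 0$, establishing $\rho \gg \nu$. Observe that the intermediate equality $\bar\mu \circ \okra{f^{\mathbb{Z}}}^{-1}(B) = 0$ is exactly what proves $\rho \gg \bar\mu \circ \okra{f^{\mathbb{Z}}}^{-1}$ too, closing the chain as announced.
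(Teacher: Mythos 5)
Your proof is correct, and its skeleton matches the paper's: produce the domination chain $\rho \gg \bar\mu\circ\okra{f^{\mathbb{Z}}}^{-1}\gg\nu$ and then invoke Lemma \ref{theoDomin} with the stationary measure $\rho$ to conclude that $\nu$ is AMS and $\rho\gg\bar\nu\gg\nu$. Where you genuinely diverge is in how the first link is established. The paper simply asserts the pointwise bound $\rho(A)\ge L^{-1}\bar\mu(\okra{f^{\mathbb{Z}}}^{-1}A)$, which rests on $G(A,{x}^{\mathbb{Z}})\ge F(A,0,{x}^{\mathbb{Z}})$ --- an inequality that is only valid when $\abs{f(x_1)}\ge 1$, even though the remark to Proposition \ref{theoQuasiMean} explicitly allows empty codewords (hence the convention $\sum_{k=0}^{-1}:=0$ in the definition of $G$). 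You instead pass to the shift-saturation $B=\bigcup_{j}T^{-j}A$, use $T$-invariance to collapse $G(B,\cdot)$ to $\abs{f(x_1)}\cdot\boole{f^{\mathbb{Z}}(\cdot)\in B}$, and then combine (\ref{EqInvar}) with the standing assumption that $f^{\mathbb{Z}}$ produces genuine two-sided sequences to exclude the degenerate alternative $\abs{f(x_{j+1})}=0$ for all $j$. This is the same invariant-set device the paper deploys in Proposition \ref{theoBarNu} and in the discussion after Lemma \ref{theoDomin}, and it buys you an argument that remains airtight when $\abs{f(x_1)}=0$ occurs with positive probability --- a case the paper's one-line inequality silently glosses over. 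The remaining links (deducing $\bar\mu\circ\okra{f^{\mathbb{Z}}}^{-1}\gg\nu$ from $\bar\mu\gg\mu$, and the final appeal to Lemma \ref{theoDomin}) coincide with the paper's.
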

\begin{proof}
  Observe that $\rho(A)\ge L^{-1}\bar\mu
  (\okra{f^{\mathbb{Z}}}^{-1}A)$. Hence $\rho \gg \bar\mu\circ
  \okra{f^{\mathbb{Z}}}^{-1}$.  Since $\bar\mu\circ
  \okra{f^{\mathbb{Z}}}^{-1}\gg\mu\circ
  \okra{f^{\mathbb{Z}}}^{-1}=\nu$ follows from $\bar\mu\gg \mu$, we
  obtain $\rho\gg \nu$. Measure $\rho$ is stationary by Proposition
  \ref{theoQuasiMean}, so $\nu$ is AMS, and $\rho \gg \bar\nu$ 
  by Lemma \ref{theoDomin}.  
\end{proof}

The next proposition states that $\rho$ is the stationary mean of the
transported measure if the expansion rate is almost surely constant.
\begin{proposition}
  \label{theoBarNuRho}
  Suppose that the hypothesis of Proposition \ref{theoQuasiMean} holds
  true and $\bar l(\cdot)=L$ $\mu$-almost everywhere. Then
  $\rho=\bar\nu$ for $\nu=\mu\circ \okra{f^{\mathbb{Z}}}^{-1}$.
\end{proposition}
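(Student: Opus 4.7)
The plan is to reduce the identity $\rho=\bar\nu$ to the formula for $\bar\nu$ already established in Proposition \ref{theoBarNu}. The key observation is that the assumption $\bar l(\cdot)=L$ $\mu$-almost everywhere, combined with $L\in(0,\infty)$ from the hypothesis of Proposition \ref{theoQuasiMean}, places us precisely in the setting of Proposition \ref{theoBarNu}: the expansion rate is $\mu$-almost surely in the range $(0,\infty)$. I would therefore invoke expression (\ref{FreqBarNuLast}) for $\bar\nu$ and simplify by pulling the constant factor $\bar l(\cdot)^{-1}\equiv L^{-1}$ outside the integral, obtaining
\begin{align*}
  \bar\nu(A)=\frac{1}{L}\int \kwad{\lim_{n\rightarrow\infty}\frac{1}{n}\sum_{i=0}^{n-1} G(A,T^i{x}^{\mathbb{Z}})}\, d\bar\mu({x}^{\mathbb{Z}}).
\end{align*}

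It then remains to show that this coincides with $\rho(A)=L^{-1}\int G(A,{x}^{\mathbb{Z}})\,d\bar\mu({x}^{\mathbb{Z}})$ from (\ref{QuasiMean}); equivalently, that the $\bar\mu$-integral of the ergodic Cesàro average of $G(A,\cdot)$ equals the $\bar\mu$-integral of $G(A,\cdot)$ itself. I would justify this by Birkhoff's ergodic theorem applied to the stationary system $(\mathbb{Y}^{\mathbb{Z}},\bar\mu,T)$: first noting $0\le G(A,{x}^{\mathbb{Z}})\le l_1({x}^{\mathbb{Z}})$, so that $\int l_1\,d\bar\mu=L<\infty$ forces $G(A,\cdot)\in L^1(\bar\mu)$; and then identifying the $\bar\mu$-almost sure limit with $\sred_{\bar\mu}\okra{G(A,\cdot)\mid \mathcal{I}_{\mathbb{Y}}}$, whose $\bar\mu$-integral equals $\int G(A,\cdot)\, d\bar\mu$ by the defining property of conditional expectation.

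I do not foresee a substantive obstacle. The only subtlety is justifying the interchange of limit and integral. Since $G(A,\cdot)$ is sandwiched between $0$ and the $\bar\mu$-integrable function $l_1$, this follows either directly from the $L^1$ mean ergodic theorem, or by combining Birkhoff with uniform integrability of the Cesàro averages: each $G(A,T^i\cdot)$ has the same $\bar\mu$-distribution as $G(A,\cdot)$ by stationarity of $\bar\mu$ and is dominated by $l_1\circ T^i$, itself of constant $\bar\mu$-integral $L$. Concatenating the two displayed equalities yields $\bar\nu(A)=\rho(A)$ for every $A\in\mathcal{Y}^{\mathbb{Z}}$, which is the proposition.
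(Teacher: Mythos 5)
Your proof is correct and takes essentially the same route as the paper's: both identify $\rho(A)=L^{-1}\int G(A,\cdot)\,d\bar\mu$ with $L^{-1}\int\bigl[\lim_n n^{-1}\sum_{i=0}^{n-1}G(A,T^i\cdot)\bigr]\,d\bar\mu$ using stationarity of $\bar\mu$ together with an $L^1$-ergodic/dominated-convergence step, and then match this against formula (\ref{FreqBarNuLast}) with $\bar l\equiv L$. Only cosmetic points: the ergodic system is $(\mathbb{X}^{\mathbb{Z}},\bar\mu,T)$ and the relevant invariant algebra is $\mathcal{I}_{\mathbb{X}}$, not $\mathcal{I}_{\mathbb{Y}}$, and it is worth remarking that $\bar l=L$ holds $\bar\mu$-a.e.\ (not just $\mu$-a.e.) because $\{\bar l\neq L\}$ is shift-invariant and $\bar\mu=\mu$ on $\mathcal{I}_{\mathbb{X}}$ by (\ref{EqInvar}).
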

\begin{proof}
  By stationarity of $\bar\mu$, identity (\ref{GAshifted}), and the
  dominated convergence,
  \begin{align*}
    \rho(A)
    &=
    \textstyle
    L^{-1}\int 
    \kwad{
      \lim_{n}
      n^{-1}\sum_{i=0}^{n-1} G(A,T^i{x}^{\mathbb{Z}})
    }
    d\bar\mu ({x}^{\mathbb{Z}})
   .
  \end{align*}
  This expression equals $\bar\nu(A)$ by Proposition \ref{theoBarNu}
  if $\bar l(\cdot)=L$ almost surely.
\end{proof}

\begin{example}
  The equality $l(\cdot)=L$ holds almost everywhere for the nonergodic
  process (\ref{UDPX}) and
  $f:\mathbb{N}\times\klam{0,1}\rightarrow\klam{0,1}^*$ if
  $f(k,z)=g(k)w(z)$ for $k\in\mathbb{N}$, $z\in\klam{0,1}$,
  $\abs{g(k)}=O(\log k)$, and $\abs{w(z)}=A$. Code (\ref{ConjCode})
  falls under that case.
\end{example}

\section{Synchronization and shift-invariant $\sigma$-fields}
\label{secInvariant}

For an injection
$\pi:\mathbb{X}^{\mathbb{Z}}\rightarrow\mathbb{Y}^{\mathbb{Z}}$, the
transported shift
\begin{align}
  \label{StarT}
  T_\pi:=\pi\circ T \circ \pi^{-1}
  ,
\end{align}
considered in \cite[Example 6]{GrayKieffer80}, constitutes an
injection $\pi(\mathbb{X}^{\mathbb{Z}})\rightarrow
\pi(\mathbb{X}^{\mathbb{Z}})$. In that case, $\nu:=\mu\circ \pi^{-1}$
is stationary with respect to $T_\pi$ for a~stationary measure $\mu$
on $(\mathbb{X}^{\mathbb{Z}},\mathcal{X}^{\mathbb{Z}})$, i.e.,
$\nu\circ T_\pi^{-1}=\nu$.

We will demonstrate that ergodic properties of measures $\mu$ and
$\nu=\mu\circ \pi^{-1}$ may be further related in the more specific
case of a~synchronizable injection.  Some apparent technical
difficulty is that the set $\pi(\mathbb{X}^{\mathbb{Z}})$ usually does
not belong to the $T$-invariant algebra
$\mathcal{I}_{\mathbb{Y}}:=\klam{A\in \mathcal{Y}^{\mathbb{Z}}:
  T^{-1}A=A}$.  However, this can be overcome easily given certain
care.

\begin{lemma}
  \label{theoInvar}
  For an injection
  $\pi:\mathbb{X}^{\mathbb{Z}}\rightarrow\mathbb{Y}^{\mathbb{Z}}$,
  consider pseudo-invariant algebras
  \begin{align*}
    \mathcal{Q}&:=\klam{A\in \mathcal{Y}^{\mathbb{Z}}: A=B\cap
      \pi(\mathbb{X}^{\mathbb{Z}}),\, T^{-1}B=B}
    ,
    \\
    \mathcal{Q}_\pi&:=\klam{A\in \mathcal{Y}^{\mathbb{Z}}: A=B\cap
      \pi(\mathbb{X}^{\mathbb{Z}}),\, {T_\pi}^{-1}B=B}
    ,
  \end{align*}
  where $T_\pi$ is defined by (\ref{StarT}). We have
  \begin{align*}
   \mathcal{Q}
   \subset \mathcal{Q}_\pi
   =\pi(\mathcal{I}_{\mathbb{X}})
    .
  \end{align*}
\end{lemma}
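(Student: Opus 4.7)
The plan is to decompose the lemma into the equality $\mathcal{Q}_\pi = \pi(\mathcal{I}_{\mathbb{X}})$ and the inclusion $\mathcal{Q} \subseteq \mathcal{Q}_\pi$, proving the equality first since it then affords a clean description of $\mathcal{Q}_\pi$ to aim at.

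For the equality: since $T_\pi$ is only defined on $\pi(\mathbb{X}^{\mathbb{Z}})$, any preimage $T_\pi^{-1}B$ sits inside $\pi(\mathbb{X}^{\mathbb{Z}})$, so the condition $T_\pi^{-1}B=B$ forces $B\subseteq\pi(\mathbb{X}^{\mathbb{Z}})$ and hence $A=B\cap \pi(\mathbb{X}^{\mathbb{Z}})=B$. Setting $C:=\pi^{-1}(A)$ and using $T_\pi=\pi\circ T\circ\pi^{-1}$, the identity $T_\pi^{-1}A=A$ rewrites as $\pi(T^{-1}C)=\pi(C)$; by injectivity of $\pi$ this gives $T^{-1}C=C$, so $A=\pi(C)\in\pi(\mathcal{I}_{\mathbb{X}})$. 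The reverse containment is symmetric: for $C\in\mathcal{I}_{\mathbb{X}}$, the set $\pi(C)\subseteq \pi(\mathbb{X}^{\mathbb{Z}})$ satisfies $T_\pi^{-1}\pi(C)=\pi(T^{-1}\pi^{-1}\pi(C))=\pi(T^{-1}C)=\pi(C)$, and this $T_\pi^{-1}$-invariant set witnesses $\pi(C)\in\mathcal{Q}_\pi$.

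For the inclusion $\mathcal{Q} \subseteq \mathcal{Q}_\pi$: given $A=B\cap \pi(\mathbb{X}^{\mathbb{Z}})$ with $T^{-1}B=B$, I would set $C:=\pi^{-1}(A)=\pi^{-1}(B)$, note that $\pi(C)=A$, and aim to show $C\in\mathcal{I}_{\mathbb{X}}$. Since $T^{-1}C=\klam{x\in\mathbb{X}^{\mathbb{Z}}\midcolon \pi(Tx)\in B}$ and $C=\klam{x\in\mathbb{X}^{\mathbb{Z}}\midcolon \pi(x)\in B}$, the equality $T^{-1}C=C$ reduces to the pointwise equivalence $\pi(x)\in B$ if and only if $\pi(Tx)\in B$, for every $x\in\mathbb{X}^{\mathbb{Z}}$. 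Granted this, $A=\pi(C)\in\pi(\mathcal{I}_{\mathbb{X}})=\mathcal{Q}_\pi$ follows from the equality already proved.

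The main obstacle is precisely this last equivalence, which for an abstract injection is not automatic because $\pi(x)$ and $\pi(Tx)$ need not share a single $T$-orbit in $\mathbb{Y}^{\mathbb{Z}}$. For the variable-length code extensions $\pi=f^{\mathbb{Z}}$ central to this paper, the quasi-periodic identity $\pi(Tx)=T^{\abs{f(x_1)}}\pi(x)$ places both $\pi(x)$ and $\pi(Tx)$ on the same $T$-orbit, and $T^{-1}B=B$ then forces the two membership conditions to hold simultaneously, closing the argument in the setting of interest.
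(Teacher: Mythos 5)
Your proof of the right equality $\mathcal{Q}_\pi=\pi(\mathcal{I}_{\mathbb{X}})$ is correct and usefully expands what the paper dismisses as ``obvious'': the observations that $T_\pi^{-1}B\subseteq\pi(\mathbb{X}^{\mathbb{Z}})$ forces $A=B$, and that conjugating by the injection $\pi$ transports $T$-invariance to $T_\pi$-invariance and back, are exactly right. Your reduction of $\mathcal{Q}\subseteq\mathcal{Q}_\pi$ to the pointwise equivalence $\pi(x)\in B\iff\pi(Tx)\in B$ is also correct, and the obstacle you flag there is genuine rather than a defect of your method. The paper's own proof of this inclusion rests on the asserted containment $\bigcup_{i\in\mathbb{Z}}T^iB\supseteq\bigcup_{i\in\mathbb{Z}}T_\pi^iB$, which holds precisely when $T_\pi$ sends each point of $\pi(\mathbb{X}^{\mathbb{Z}})$ to a point on its own $T$-orbit --- the very property you isolate --- and this does not follow from injectivity alone. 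For instance, let $\pi$ be the measurable bijection of $\klam{0,1}^{\mathbb{Z}}$ that swaps a point $x_0$ with infinite $T$-orbit and the $T$-fixed point $c=\ldots000\bm{.}000\ldots$ and is the identity elsewhere: then $B=\klam{c}$ is $T$-invariant, so $\klam{c}\in\mathcal{Q}$, yet $\pi^{-1}(\klam{c})=\klam{x_0}$ is not $T$-invariant, whence $\klam{c}\notin\pi(\mathcal{I}_{\mathbb{X}})=\mathcal{Q}_\pi$. So the left inclusion, as literally stated for a bare injection, needs an extra hypothesis, and your proof is no less complete than the paper's on this point.

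Your repair via the quasiperiodic identity $\pi(Tx)=T^{\abs{f(x_1)}}\pi(x)$ is sound: $T$ is a bijection of $\mathbb{Y}^{\mathbb{Z}}$, so $T^{-1}B=B$ gives $T^{k}B=B$ for every $k\in\mathbb{Z}$ and the equivalence follows. This covers every invocation of the lemma in the paper, where $\pi$ is always of the form $f^{\mathbb{Z}}$. If one wants an abstract hypothesis, ``$T_\pi y$ lies on the $T$-orbit of $y$ for all $y\in\pi(\mathbb{X}^{\mathbb{Z}})$'' is what both your argument and the paper's actually consume; note that it is not supplied by synchronizability either, which only controls the opposite direction of the orbit correspondence (the one exploited in Proposition \ref{theoSync}).
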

\begin{proof}
  The right equality is obvious. As for the left relation, observe
  that $\bigcup_{i\in\mathbb{Z}} T^i B\supset \bigcup_{i\in\mathbb{Z}}
  T_\pi^i B \supset B$.  If $T^{-1}B=B$ then $\bigcup_{i\in\mathbb{Z}}
  T^iB=B$. Hence $B\cap \pi(\mathbb{X}^{\mathbb{Z}})\in \mathcal{Q}_\pi$
  since formula $\bigcup_{i\in\mathbb{Z}} T_\pi^i B$ defines
  a~$T_\pi$-invariant set.
\end{proof}


\begin{proposition}
  \label{theoSync}
  For a~synchronizable injection
  $\pi:\mathbb{X}^{\mathbb{Z}}\rightarrow\mathbb{Y}^{\mathbb{Z}}$,
  \begin{align*}
    \mathcal{Q} = \mathcal{Q}_\pi
    .
  \end{align*}
\end{proposition}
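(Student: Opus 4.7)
The plan is to establish the remaining inclusion $\mathcal{Q}_\pi \subset \mathcal{Q}$, since Lemma \ref{theoInvar} already gives the other direction. By the equality $\mathcal{Q}_\pi = \pi(\mathcal{I}_{\mathbb{X}})$ from that lemma, any $A \in \mathcal{Q}_\pi$ may be written $A = \pi(C)$ for some $C \in \mathcal{I}_{\mathbb{X}}$, i.e., $T^{-1}C = C$. The goal is to exhibit a $T$-invariant set $B \in \mathcal{Y}^{\mathbb{Z}}$ with $A = B \cap \pi(\mathbb{X}^{\mathbb{Z}})$.

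The natural candidate is $B := \bigcup_{i \in \mathbb{Z}} T^i A$, measurable as a countable union and manifestly $T$-invariant. Since $A \subset \pi(\mathbb{X}^{\mathbb{Z}})$ and $A \subset B$, one inclusion $A \subset B \cap \pi(\mathbb{X}^{\mathbb{Z}})$ is immediate. Only the reverse inclusion $B \cap \pi(\mathbb{X}^{\mathbb{Z}}) \subset A$ requires work, and this is where synchronizability enters.

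For the reverse inclusion, I would pick $y^{\mathbb{Z}} \in B \cap \pi(\mathbb{X}^{\mathbb{Z}})$, and express it in two ways: $y^{\mathbb{Z}} = \pi(x^{\mathbb{Z}})$ for some $x^{\mathbb{Z}} \in \mathbb{X}^{\mathbb{Z}}$, and $y^{\mathbb{Z}} = T^i \pi(b^{\mathbb{Z}})$ for some $i \in \mathbb{Z}$ and some $b^{\mathbb{Z}}$ with $\pi(b^{\mathbb{Z}}) \in A = \pi(C)$; injectivity of $\pi$ gives $b^{\mathbb{Z}} \in C$. Applying the defining property of a synchronizable injection to $T^i \pi(b^{\mathbb{Z}}) = \pi(x^{\mathbb{Z}})$ yields $j \in \mathbb{Z}$ with $T^j b^{\mathbb{Z}} = x^{\mathbb{Z}}$. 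The invariance $T^{-1}C = C$ iterates to $T^j C = C$, so $x^{\mathbb{Z}} \in C$, and therefore $y^{\mathbb{Z}} = \pi(x^{\mathbb{Z}}) \in \pi(C) = A$.

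The main obstacle is aligning the synchronization hypothesis with the two different shift operations in play: the shift on $B$ is $T$ on $\mathbb{Y}^{\mathbb{Z}}$, whereas the invariance of $C$ lives in $\mathbb{X}^{\mathbb{Z}}$ under a different $T$. Synchronizability is precisely the bridge that converts an external shift equating two $\pi$-images into an internal shift between their preimages, and one has to apply it in the correct direction and then invoke shift-invariance of $C$ for arbitrary integer shifts, not just $T^{-1}$. Once this correspondence is set up, the inclusion follows with no further computation.
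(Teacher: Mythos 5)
Your proposal is correct and follows essentially the same route as the paper: given $A=\pi(C)$ with $C\in\mathcal{I}_{\mathbb{X}}$, both arguments take the invariant set $B=\bigcup_{i\in\mathbb{Z}}T^i\pi(C)$ and use synchronizability plus the full $T^j$-invariance of $C$ to show $B\cap\pi(\mathbb{X}^{\mathbb{Z}})=A$. You merely spell out the pointwise verification that the paper compresses into its final sentence.
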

\begin{proof}
  By Lemma \ref{theoInvar}, $\mathcal{Q} \subset \mathcal{Q}_\pi$.
  Thus it suffices to show that $\mathcal{Q}_\pi\subset \mathcal{Q}$
  or, equivalently, that $\mathcal{I}_{\mathbb{X}}\subset
  \pi^{-1}(\mathcal{Q})$.  We will demonstrate the latter.  Consider
  an $A\in\mathcal{I}_{\mathbb{X}}$ and construct the set
  $E=\pi(\mathbb{X}^{\mathbb{Z}})\cap \bigcup_{i\in\mathbb{Z}} T^i
  \pi(A)\in \mathcal{Q}$. Since $\pi$ is synchronizable and $A$ is
  $T$-invariant, we have that $\pi^{-1}(E)=A$.
\end{proof}

\begin{proposition}
  \label{theoSyncII}
  Consider a~synchronizable injection
  $\pi:\mathbb{X}^{\mathbb{Z}}\rightarrow\mathbb{Y}^{\mathbb{Z}}$,
  a~measure $\mu$ on
  $(\mathbb{X}^{\mathbb{Z}},\mathcal{X}^{\mathbb{Z}})$, and its image
  $\nu=\mu\circ\pi^{-1}$ on
  $(\mathbb{Y}^{\mathbb{Z}},\mathcal{Y}^{\mathbb{Z}})$.  For each
  $E\in\mathcal{I}_{\mathbb{Y}}$, there exists such an $A\in
  \mathcal{I}_{\mathbb{X}}$, and for each $A\in
  \mathcal{I}_{\mathbb{X}}$, there exists an
  $E\in\mathcal{I}_{\mathbb{Y}}$ such that
  \begin{align}
    \nu(E)=\nu(E\cap \pi(\mathbb{X}^{\mathbb{Z}}))
    =\mu(A)
    \label{InvarProb}
    .
  \end{align}
  Remark: \emph{As a~further corollary, either both measures $\mu$ and
    $\nu$ are ergodic or neither of them has this property.  Let us
    recall that $\bar\mu(A)=\mu(A)$ for $A\in
    \mathcal{I}_{\mathbb{X}}$ and an AMS $\mu$. The analogical
    equality $\bar\nu(E)=\nu(E)$ holds for $E \in
    \mathcal{I}_{\mathbb{Y}}$ and an AMS $\nu$. Some oddness of
    (\ref{InvarProb}) is buried in the fact that $\bar\nu(E)$ does not
    necessarily equal $\bar\nu(E\cap \pi(\mathbb{X}^{\mathbb{Z}}))$.
    It is only the support of $\nu$ that is confined to
    $\pi(\mathbb{X}^{\mathbb{Z}})$, and $\pi(\mathbb{X}^{\mathbb{Z}})$
    need not be $T$-invariant, as it has been remarked.}
\end{proposition}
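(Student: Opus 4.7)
My plan is to use Proposition \ref{theoSync} as the workhorse: it tells us that under synchronizability, $\mathcal{Q} = \mathcal{Q}_\pi = \pi(\mathcal{I}_{\mathbb{X}})$. The two halves of the stated equivalence correspond, respectively, to pulling an $\mathcal{I}_{\mathbb{Y}}$-set back through $\pi^{-1}$ and pushing an $\mathcal{I}_{\mathbb{X}}$-set forward through $\pi$, with $\nu$ being concentrated on $\pi(\mathbb{X}^{\mathbb{Z}})$ bridging the two.

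For the first direction, given $E \in \mathcal{I}_{\mathbb{Y}}$, I would observe that $E \cap \pi(\mathbb{X}^{\mathbb{Z}})$ lies in $\mathcal{Q}$ by its very definition, so by Proposition \ref{theoSync} it equals $\pi(A)$ for some $A \in \mathcal{I}_{\mathbb{X}}$. Then injectivity of $\pi$ gives $\pi^{-1}(E) = \pi^{-1}(E \cap \pi(\mathbb{X}^{\mathbb{Z}})) = \pi^{-1}(\pi(A)) = A$, and the definition $\nu = \mu \circ \pi^{-1}$ yields $\nu(E) = \nu(E \cap \pi(\mathbb{X}^{\mathbb{Z}})) = \mu(A)$. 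For the reverse direction, given $A \in \mathcal{I}_{\mathbb{X}}$, Proposition \ref{theoSync} writes $\pi(A) = E \cap \pi(\mathbb{X}^{\mathbb{Z}})$ for some $E \in \mathcal{I}_{\mathbb{Y}}$; then $\pi^{-1}(E) = \pi^{-1}(E \cap \pi(\mathbb{X}^{\mathbb{Z}})) = \pi^{-1}(\pi(A)) = A$, so again $\nu(E) = \nu(E \cap \pi(\mathbb{X}^{\mathbb{Z}})) = \mu(A)$.

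The only nontrivial point to be careful about is the identity $\pi^{-1}(E) = \pi^{-1}(E \cap \pi(\mathbb{X}^{\mathbb{Z}}))$, which is purely set-theoretic (no measurability of $\pi(\mathbb{X}^{\mathbb{Z}})$ is needed on the right-hand side since we only evaluate $\nu$ on the measurable set $E \cap \pi(\mathbb{X}^{\mathbb{Z}})$, which is the intersection of two elements of $\mathcal{Y}^{\mathbb{Z}}$ by construction through $\mathcal{Q}$). The main obstacle, or rather the only conceptual content, is already packaged into Proposition \ref{theoSync}: synchronizability is what guarantees that a $T$-invariant set $B$ on the $\mathbb{Y}$-side can be chosen so that $B \cap \pi(\mathbb{X}^{\mathbb{Z}})$ corresponds precisely to a $T$-invariant set on the $\mathbb{X}$-side, and this in turn depends on the trick of enlarging a $T_\pi$-invariant set to a $T$-invariant one by taking $\bigcup_{i \in \mathbb{Z}} T^i(\cdot)$. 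Everything else is bookkeeping of pre-images under an injection.
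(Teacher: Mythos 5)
Your proposal is correct and follows essentially the same route as the paper: the paper's proof likewise takes $A=\pi^{-1}(E)$ for the first direction and $E=\bigcup_{i\in\mathbb{Z}}T^i\pi(A)$ for the second, deferring all the real work to Proposition \ref{theoSync}. Your version merely makes explicit the bookkeeping ($\pi^{-1}(E)=\pi^{-1}(E\cap\pi(\mathbb{X}^{\mathbb{Z}}))$ and the identification of $E\cap\pi(\mathbb{X}^{\mathbb{Z}})$ with $\pi(A)$) that the paper leaves to the reader.
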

\begin{proof}
  For $E\in\mathcal{I}_{\mathbb{Y}}$, take $A=\pi^{-1}(E)$. For $A\in
  \mathcal{I}_{\mathbb{X}}$, take $E=\bigcup_{i\in\mathbb{Z}} T^i
  \pi(A)$. Then the equality follows immediately from Proposition
  \ref{theoSync}.
\end{proof}

\begin{example}
  Process (\ref{UDPX}) has a~nonatomic shift-invariant
  sub-$\sigma$-field \cite{Debowski09}. Hence the expanded process
  $(Y_i)_{i\in\mathbb{Z}}=f^{\mathbb{Z}}((X_i)_{i\in\mathbb{Z}})$
  distributed $P((Y_i)_{i\in\mathbb{Z}}\in\cdot)=\nu$ and its
  stationary mean $(\bar Y_i)_{i\in\mathbb{Z}}$ distributed $\bar
  P((\bar Y_i)_{i\in\mathbb{Z}}\in\cdot)=\overline{\nu}$ have the same
  property if we use a~comma-separated code
  $f:\mathbb{X}\rightarrow\mathbb{Y}^*$, like (\ref{ConjCode}).
  Moreover, $(\bar Y_i)_{i\in\mathbb{Z}}$ has a~nonatomic
  shift-invariant sub-$\sigma$-field if and only if (\ref{UDPcondY})
  holds true for certain functions $\bar s_{k}$ and independent
  equidistributed binary random variables $(\bar
  Z_k)_{k\in\mathbb{N}}$ \cite[Theorem 9]{Debowski09}.
\end{example}

\section{Complete fix-free codes and stationarity}
\label{secShrunk}

This section contains a~result of independent interest, loosely
related to the setting of our initial problem.  For any injection
$\pi:\mathbb{X}^{\mathbb{Z}}\rightarrow\mathbb{Y}^{\mathbb{Z}}$,
measures may also be transported in the opposite direction. That is,
for any measure $\nu$ on
$(\mathbb{Y}^{\mathbb{Z}},\mathcal{Y}^{\mathbb{Z}})$, there exists
a~measure $\mu:=\nu\circ {\pi}$ on
$(\mathbb{X}^{\mathbb{Z}},\mathcal{X}^{\mathbb{Z}})$. A~condition
opposite to synchronization appears when this mapping is required to
preserve stationarity.  We came across the following proposition,
which seemingly has not been noticed so far, cf.\
\cite{BajiStefanovicVukobratovic05}:
\begin{proposition} 
  \label{theoShrunkF}
  Suppose that $\mathbb{X}$ is finite and $f:\mathbb{X}\rightarrow\mathbb{Y}^*$
  is complete fix-free. Then
  \begin{enumerate}
  \item $f^{\mathbb{Z}}$ is a~bijection
    $\mathbb{X}^{\mathbb{Z}}\rightarrow\mathbb{Y}^{\mathbb{Z}}$, and
  \item $\mu=\nu\circ {f^{\mathbb{Z}}}$ is a~stationary measure on
    $(\mathbb{X}^{\mathbb{Z}},\mathcal{X}^{\mathbb{Z}})$ if $\nu$ is
    a~stationary measure on
    $(\mathbb{Y}^{\mathbb{Z}},\mathcal{Y}^{\mathbb{Z}})$.
  \end{enumerate}
  Remark: \emph{Statement (i) may be false for a~complete infinite
    prefix-free set $f(\mathbb{X})$. For instance, the set
    $\okra{f^{\mathbb{Z}}}^{-1}(\klam{...00\bm{.}00..})$ is empty for
    $f(n)=\klam{0^{n-1}1: n\in\mathbb{N}}$, $\mathbb{X}=\mathbb{N}$,
    and $\mathbb{Y}=\klam{0,1}$.  However, we do not know of any
    complete infinite set of strings that would be both prefix- and
    suffix-free, cf.\
    \cite{GillmanRivest95,AhlswedeBalkenholKhachatrian96}.}
\end{proposition}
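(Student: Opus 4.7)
The plan for part (i) is to treat injectivity and surjectivity separately, exploiting the symmetry of complete fix-freeness under string reversal. For injectivity, note that if $y^{\mathbb{Z}} = f^{\mathbb{Z}}(x^{\mathbb{Z}})$, the bold-face dot pins the codeword $f(x_1)$ to positions $1,\ldots,\abs{f(x_1)}$ of $y^{\mathbb{Z}}$; prefix-freeness of $f(\mathbb{X})$ then determines $f(x_1)$ uniquely from $y_1 y_2 \ldots$, and injectivity of $f$ recovers $x_1$. Iterating forward pins down $x_1, x_2, \ldots$, and the mirror-image argument using suffix-freeness recovers $x_0, x_{-1}, \ldots$. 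For surjectivity, I would realize the finite complete prefix-free code as a finite saturated $\abs{\mathbb{Y}}$-ary tree in which every internal node has a child for every letter of $\mathbb{Y}$; reading $y_1 y_2 \ldots$ reaches a leaf within $M := \max_{w \in f(\mathbb{X})} \abs{w}$ steps, producing a codeword, after which one iterates. The reversed code is also complete prefix-free (Kraft's sum is symmetric under reversal and suffix-freeness becomes prefix-freeness), so the symmetric argument produces a backward parse of $\ldots y_{-1} y_0$, and concatenating the two gives the required preimage.

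For part (ii), set $\pi := f^{\mathbb{Z}}$ and $T_\pi := \pi T \pi^{-1}$ as in (\ref{StarT}). By (i), $\pi$ is a bijection, $T_\pi$ is defined on all of $\mathbb{Y}^{\mathbb{Z}}$, and the desired equality $\mu \circ T^{-1} = \mu$ reduces to $\nu \circ T_\pi^{-1} = \nu$. The forward parse from part (i) yields the partition $\mathbb{Y}^{\mathbb{Z}} = \bigsqcup_{w \in f(\mathbb{X})} [w]$, and on each piece $[w]$ the map $T_\pi$ coincides with the ordinary shift $T$ iterated $\abs{w}$ times, by the quasiperiodic identity stated just above (\ref{FAshifted}). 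Hence
\begin{align*}
\nu(T_\pi^{-1} B) = \sum_{w \in f(\mathbb{X})} \nu\okra{[w] \cap T^{-\abs{w}} B}.
\end{align*}
Translating each summand by $T^{\abs{w}}$ and invoking stationarity of $\nu$ under $T$ rewrites this as $\sum_{w} \nu\okra{T^{\abs{w}}[w] \cap B}$, where $T^{\abs{w}}[w] = \klam{y^{\mathbb{Z}} : y_{1-\abs{w}}^{0} = w}$ is the cylinder of sequences in which $w$ ends at position $0$. The backward parse from part (i) provides the dual partition $\mathbb{Y}^{\mathbb{Z}} = \bigsqcup_{w \in f(\mathbb{X})} T^{\abs{w}}[w]$, so the sum collapses to $\nu(B)$, finishing the argument.

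The main obstacle is the surjectivity half of part (i): one must construct a parse whose break points are simultaneously well-defined on both sides of the dot at position $0$. Finiteness of $\mathbb{X}$ is essential here, as the remark following the statement shows that infinite-alphabet completeness does not suffice; the finite saturated tree picture forces every parse step to terminate in at most $M$ letters, which is what makes both the forward and backward parses everywhere defined. Once part (i) is in hand, part (ii) is a bookkeeping exercise once one notices that complete fix-freeness supplies \emph{both} partitions: the forward one $\bigsqcup [w]$, which rewrites $T_\pi^{-1}B$ as a sum, and its dual $\bigsqcup T^{\abs{w}}[w]$, which reassembles the sum back into $\nu(B)$. Prefix-freeness with completeness gives the first; suffix-freeness with completeness gives the second; hence the full strength of the hypothesis is used.
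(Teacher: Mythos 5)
Your proof is correct, and part (ii) takes a genuinely different route from the paper's. Part (i) is essentially the paper's argument in different clothing: the saturated-tree picture is equivalent to the paper's observation that a prefix of $y_k^{\infty}$ longer than every codeword, having no codeword prefix, could be adjoined to $\mathcal{L}$ while preserving prefix-freeness and thereby violate the Kraft equality; finiteness of $\mathbb{X}$ enters at the same point in both versions, and both then appeal to reversal symmetry for the backward parse. For part (ii), however, the paper reduces stationarity of $\mu$, via the Kolmogorov extension and $\pi$-$\lambda$ theorems, to the cylinder identities $\sum_{z\in\mathcal{L}}\nu([wz])=\nu([w])=\sum_{z\in\mathcal{L}}\nu([zw])$ for $w\in\mathcal{L}^*$, and proves these by induction on the length of the longest codeword, using a decomposition $\mathcal{M}=\mathcal{M}_r\cup(\mathcal{M}_p\times\mathbb{Y})$ of finite complete prefix-free sets combined with one-step stationarity of $\nu$. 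You instead argue globally with the transported shift $T_\pi=\pi\circ T\circ\pi^{-1}$, $\pi=f^{\mathbb{Z}}$, which part (i) makes everywhere defined, and with the two partitions of $\mathbb{Y}^{\mathbb{Z}}$ — by the codeword beginning at position $1$ and by the codeword ending at position $0$ — using $T$-invariance of $\nu$ to carry the first partition onto the second. This is a clean alternative that makes the role of suffix-freeness (it supplies the dual partition into which the shifted sum collapses) especially transparent, whereas the paper's induction isolates a reusable combinatorial lemma about complete prefix-free sets and never manipulates the inverse map. The only points you leave implicit are routine: that $f^{\mathbb{Z}}$ carries Borel sets to Borel sets, so that $\mu\circ T^{-1}=\mu$ really is equivalent to $\nu\circ T_\pi^{-1}=\nu$ on all of $\mathcal{Y}^{\mathbb{Z}}$, and that $T$-invariance of $\nu$ gives $\nu(TA)=\nu(A)$ as well as $\nu(T^{-1}A)=\nu(A)$; both are immediate because $T$ is a bimeasurable bijection of $\mathbb{Y}^{\mathbb{Z}}$.
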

\begin{proof}
  Let $\mathcal{L}=f(\mathbb{X})$.

  (i) Clearly, $|w|\ge 1$ for $w\in\mathcal{L}$ if $f$ is a~bijection.
  Thus $f^{\mathbb{Z}}({x}^{\mathbb{Z}})$ is a~two-sided sequence for
  ${x}^{\mathbb{Z}}\in\mathbb{X}^{\mathbb{Z}}$. Moreover, given
  a~${y}^{\mathbb{Z}}\in\mathbb{Y}^{\mathbb{Z}}$, we can reconstruct
  the unique ${x}^{\mathbb{Z}}\in\mathbb{X}^{\mathbb{Z}}$ with
  $f^{\mathbb{Z}}({x}^{\mathbb{Z}})={y}^{\mathbb{Z}}$ by cutting off
  the consecutive suffixes or prefixes belonging to $\mathcal{L}$ from
  $y_{-\infty}^{0}$ and $y_{1}^{\infty}$.  By the following reasoning,
  this parsing process is guaranteed not to stop after a~finite number
  of steps.

  On the contrary, assume that there is such an infinite sequence
  $y_{k}^{\infty}$ (the mirrorlike argumentation applies to
  $y_{-\infty}^{k}$) such that no $w\in \mathcal{L}$ is a~prefix of
  $y_{k}^{\infty}$. Let $v$ be a~prefix of $y_{k}^{\infty}$ that is
  longer than any $w\in \mathcal{L}$. The set $\klam{v}\cup
  \mathcal{L}$ is prefix-free, and so $\sum_{w\in \mathcal{L}}
  \abs{\mathbb{Y}}^{-\abs{w}}\le 1-\abs{\mathbb{Y}}^{-\abs{v}}<1$ by
  the Kraft inequality.  We have arrived at a~contradiction, so the
  assumption was false.
  
  (ii) By the Kolmogorov process theorem and the
  $\pi$-$\lambda$ theorem, stationarity of $\mu$ is equivalent
  to the set of equalities 
  \begin{align}
    \label{MuSta}
    \sum_{z\in \mathcal{L}} \nu([wz])=\nu([w])=\sum_{z\in \mathcal{L}}
    \nu([zw]), \quad w\in \mathcal{L}^*
    .
  \end{align}
  On the other hand, stationarity of $\nu$ is equivalent to 
  \begin{align}
    \label{NuSta}
    \sum_{s\in\mathbb{Y}} \nu([ws])=\nu([w])=\sum_{s\in\mathbb{Y}}
    \nu([sw]), \quad w\in \mathbb{Y}^*
    .
  \end{align}
  
  The following auxiliary fact is useful to derive (\ref{MuSta}) from
  (\ref{NuSta}): Let $l(\mathcal{M})\ge 1$ be the length of the
  longest string in a~set $\mathcal{M}$. Any finite complete prefix-free
  set $\mathcal{M}\subset\mathbb{Y}^*$ may be decomposed as
  $\mathcal{M}=\mathcal{M}_r\cup (\mathcal{M}_p\times\mathbb{Y})$,
  where
  $l(\mathcal{M})=l(\mathcal{M}_p\times\mathbb{Y})>l(\mathcal{M}_r)$,
  and $\mathcal{M}_m=\mathcal{M}_r\cup \mathcal{M}_p$ is a~complete
  prefix-free set. This decomposition may be proved by contradiction
  with the Kraft inequality applied to $\mathcal{M}_m$.

  Fron this and from (\ref{NuSta}), it follows that for any complete
  prefix-free $\mathcal{M}$ with $l(\mathcal{M})\ge 1$, there exists
  a~complete prefix-free $\mathcal{M}_m$ such that
  $l(\mathcal{M})=l(\mathcal{M}_m)-1$ and
  \begin{align*}
    \sum_{z\in \mathcal{M}} \nu([wz])=\sum_{z\in \mathcal{M}_m} \nu([wz])
    .
  \end{align*}
  Using this, the left equality in (\ref{MuSta}) may be proved by
  induction on $l(\mathcal{M})$ starting with
  $\mathcal{M}_m=\klam{\lambda}$. The proof of the right equality is
  mirrorlike.
\end{proof}

\begin{corollary}
  \label{theoShrunkFAMS}
  Suppose that $\mathbb{X}$ is finite,
  $f:\mathbb{X}\rightarrow\mathbb{Y}^*$ is complete fix-free, and
  $\nu$ is AMS on
  $(\mathbb{Y}^{\mathbb{Z}},\mathcal{Y}^{\mathbb{Z}})$.  Then
  $\mu=\nu\circ {f^{\mathbb{Z}}}$ is AMS on
  $(\mathbb{X}^{\mathbb{Z}},\mathcal{X}^{\mathbb{Z}})$ with the
  stationary mean $\bar\mu\ll \bar\nu\circ {f^{\mathbb{Z}}}$.
\end{corollary}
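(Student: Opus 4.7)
The plan is to deduce the corollary as a direct concatenation of Proposition~\ref{theoShrunkF} with the dominance characterization of AMS measures in Lemma~\ref{theoDomin}. The idea is that Proposition~\ref{theoShrunkF}(ii) lets me pull a stationary dominating measure for $\nu$ back to a stationary dominating measure for $\mu$; Lemma~\ref{theoDomin} then certifies that $\mu$ is AMS and pins down its stationary mean up to dominance.

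Concretely, I would proceed as follows. First, since $\nu$ is AMS on $(\mathbb{Y}^{\mathbb{Z}},\mathcal{Y}^{\mathbb{Z}})$, Lemma~\ref{theoDomin} produces a stationary measure $\tau$ with $\tau\gg \bar\nu\gg \nu$. Applying Proposition~\ref{theoShrunkF}(ii) to each of the stationary measures $\tau$ and $\bar\nu$, I conclude that both $\tau\circ f^{\mathbb{Z}}$ and $\bar\nu\circ f^{\mathbb{Z}}$ are stationary measures on $(\mathbb{X}^{\mathbb{Z}},\mathcal{X}^{\mathbb{Z}})$. Proposition~\ref{theoShrunkF}(i) is used implicitly here: the bijection $f^{\mathbb{Z}}:\mathbb{X}^{\mathbb{Z}}\to\mathbb{Y}^{\mathbb{Z}}$ ensures that these composite measures are honestly defined probability measures on the full space.

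Next, I transfer the dominances through $f^{\mathbb{Z}}$. For $A\in\mathcal{X}^{\mathbb{Z}}$, the identity $(\tau\circ f^{\mathbb{Z}})(A)=\tau(f^{\mathbb{Z}}(A))$ together with $\tau\gg \nu$ yields $(\tau\circ f^{\mathbb{Z}})(A)=0\Rightarrow \nu(f^{\mathbb{Z}}(A))=0\Rightarrow (\nu\circ f^{\mathbb{Z}})(A)=\mu(A)=0$, so $\tau\circ f^{\mathbb{Z}}\gg \mu$; the same argument applied to $\bar\nu\gg \nu$ gives $\bar\nu\circ f^{\mathbb{Z}}\gg \mu$. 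Invoking the converse direction of Lemma~\ref{theoDomin}, the existence of the stationary dominator $\bar\nu\circ f^{\mathbb{Z}}$ forces $\mu$ to be AMS, and the same lemma then delivers $\bar\nu\circ f^{\mathbb{Z}}\gg \bar\mu$, which is the second assertion.

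There is no genuine obstacle here; the whole argument is just chaining Proposition~\ref{theoShrunkF} and Lemma~\ref{theoDomin}. The only point requiring a moment's care is the notational one: reading $\nu\circ f^{\mathbb{Z}}$ as a measure on $(\mathbb{X}^{\mathbb{Z}},\mathcal{X}^{\mathbb{Z}})$ given by $A\mapsto \nu(f^{\mathbb{Z}}(A))$ is legitimate because $f^{\mathbb{Z}}$ is a bijection by Proposition~\ref{theoShrunkF}(i), so $f^{\mathbb{Z}}(A)\in\mathcal{Y}^{\mathbb{Z}}$ for every $A\in\mathcal{X}^{\mathbb{Z}}$ and null sets transfer cleanly.
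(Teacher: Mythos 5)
Your proof is correct and follows essentially the same route as the paper: establish that $\bar\nu\circ f^{\mathbb{Z}}$ is stationary via Proposition~\ref{theoShrunkF}, note $\mu=\nu\circ f^{\mathbb{Z}}\ll\bar\nu\circ f^{\mathbb{Z}}$ from $\nu\ll\bar\nu$, and conclude by Lemma~\ref{theoDomin}. The auxiliary stationary measure $\tau$ you introduce is redundant (since $\bar\nu$ is itself stationary it already does the job), but this does not affect correctness.
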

\begin{proof}
  We have $\mu=\nu\circ {f^{\mathbb{Z}}}\ll \bar\nu\circ
  {f^{\mathbb{Z}}}$, where the last measure is stationary by Proposition
  \ref{theoShrunkF}. Hence the claim follows by Lemma \ref{theoDomin}.
\end{proof}

\section{Preservation of the finite-energy property}
\label{secEnergy}

We supposed that both $f^{\mathbb{Z}}$ and
$\okra{f^{\mathbb{Z}}}^{-1}$ preserve the finite-energy property if
the coding function $f$ is sufficiently nice, prefix-free in
particular. The proofs are a~bit more complicated than we expected,
but convenient sufficient conditions can be formulated.

\begin{definition}
  More specifically, we will say that (i) a~measure $\mu$ on
  $(\mathbb{X}^{\mathbb{Z}},\mathcal{X}^{\mathbb{Z}})$ \emph{has
    $(K,c)$-energy} if $c<1$, $K<\infty$, and condition (\ref{FE})
  holds and (ii) the measure $\mu$ \emph{has $(K,c,f)$-energy} for
  a~coding function $f:\mathbb{X}\rightarrow\mathbb{Y}^*$ if $c<1$,
  $K<\infty$, and
  \begin{align}
    \label{gFE}
    \mu([uv])\le Kc^{\abs{f^*(v)}} \mu([u])
    .
  \end{align}
  Remark: \emph{If a~function $f:\mathbb{X}\rightarrow\mathbb{Y}^*$ is
    prefix-free, then, by the Kraft inequality $\sum_{x\in\mathbb{X}}
    \abs{\mathbb{Y}}^{-\abs{f(x)}}\le 1$, condition (\ref{gFE}) may be
    only satisfied for $c\ge \abs{\mathbb{Y}}^{-1}$. In particular,
    the inequality $c> \abs{\mathbb{Y}}^{-1}$ must be strict for
    a~noncomplete coding function, i.e., when $\sum_{x\in\mathbb{X}}
    \abs{\mathbb{Y}}^{-\abs{f(x)}}< 1$.}
\end{definition}

\begin{proposition}
  \label{theoFENuFEMu}
  If $f:\mathbb{X}\rightarrow\mathbb{Y}^*$ is prefix-free and
  a~measure $\mu$ on
  $(\mathbb{X}^{\mathbb{Z}},\mathcal{X}^{\mathbb{Z}})$ has
  $(K,c,f)$-energy, then $\mu$ has also $(K,c)$-energy.
\end{proposition}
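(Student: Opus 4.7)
The plan is to reduce the $(K,c)$-energy inequality to the $(K,c,f)$-energy inequality by exploiting the fact that prefix-freeness forces every codeword to have length at least one. First I would verify that $\abs{f(x)}\ge 1$ for every $x\in\mathbb{X}$. Indeed, if $f(x_0)=\lambda$ for some $x_0$ and $f(x_1)=w\neq\lambda$ for some other $x_1$, then $w=\lambda\cdot w$ with $w\in\mathbb{Y}^+$, contradicting the prefix-free property of $f(\mathbb{X})$. Hence $\lambda\in f(\mathbb{X})$ is possible only when $\abs{\mathbb{X}}=1$, a degenerate case where both energy notions coincide trivially; in all other cases $\abs{f(x)}\ge 1$ for all $x$.

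Next I would combine this with a one-line arithmetic step. For any $v=v_1v_2\ldots v_n\in\mathbb{X}^*$, summing over coordinates yields
\begin{align*}
  \abs{f^*(v)}=\sum_{i=1}^{n}\abs{f(v_i)}\ge n=\abs{v}.
\end{align*}
Since $0<c<1$, monotonicity of $c^x$ in $x$ gives $c^{\abs{f^*(v)}}\le c^{\abs{v}}$. Substituting this bound into the $(K,c,f)$-energy inequality $\mu([uv])\le Kc^{\abs{f^*(v)}}\mu([u])$ produces the desired $\mu([uv])\le Kc^{\abs{v}}\mu([u])$, i.e., $(K,c)$-energy with exactly the same constants $K$ and $c$.

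There is essentially no substantive obstacle here: the proposition is a naming-convention check confirming that $(K,c,f)$-energy is at least as strong a hypothesis as $(K,c)$-energy whenever the codewords are bounded below in length, which prefix-freeness guarantees. The genuinely more delicate work around finite-energy preservation alluded to at the start of Section \ref{secEnergy} will presumably appear in the converse (and coding-theoretic) direction, where one wants $\mu\circ\okra{f^{\mathbb{Z}}}^{-1}$ to inherit finite energy from $\mu$.
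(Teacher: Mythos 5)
Your proof is correct and matches the paper's own argument exactly: prefix-freeness gives $\abs{f^*(v)}\ge\abs{v}$, hence $c^{\abs{f^*(v)}}\le c^{\abs{v}}$ for $c<1$, and the $(K,c,f)$-energy bound immediately yields the $(K,c)$-energy bound with the same constants. Your explicit verification that codewords are nonempty is a reasonable addition the paper leaves implicit.
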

\begin{proof}
  If $f$ is prefix-free, then $\abs{f^*(u)}\ge \abs{u}$. Hence,
\begin{align*}
  \mu([vu]) \le Kc^{\abs{f^*(u)}} \mu([v])
  \le Kc^{\abs{u}} \mu([v])
  .
\end{align*}
\end{proof}

\begin{proposition}
  \label{theoFENugFEMu}
  If $f:\mathbb{X}\rightarrow\mathbb{Y}^*$ is prefix-free and
  a~measure $\nu=\mu\circ \okra{f^{\mathbb{Z}}}^{-1}$ on
  $(\mathbb{Y}^{\mathbb{Z}},\mathcal{Y}^{\mathbb{Z}})$ has
  $(K,c)$-energy, then the measure $\mu$ on
  $(\mathbb{X}^{\mathbb{Z}},\mathcal{X}^{\mathbb{Z}})$ has
  $(K,c,f)$-energy.
\end{proposition}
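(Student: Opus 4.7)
The plan is to reduce the question to the already-assumed $(K,c)$-energy of $\nu$ by exploiting the fact that a prefix-free code makes the map $f^{\mathbb{Z}}$ very transparent on cylinder sets. Concretely, I would first establish the key identity
\begin{align*}
  \mu([u]) = \nu([f^*(u)]), \qquad u \in \mathbb{X}^*.
\end{align*}
For one inclusion, if $x^{|u|} = u$ then by the definition (\ref{InfExtension}) the first $|f^*(u)|$ coordinates of $f^{\mathbb{Z}}(x^{\mathbb{Z}})$ are exactly $f(u_1)f(u_2)\ldots f(u_{|u|}) = f^*(u)$. For the reverse inclusion, use the prefix-free property: if the coordinates $y_1 \ldots y_{|f^*(u)|}$ of $f^{\mathbb{Z}}(x^{\mathbb{Z}})$ spell out $f^*(u)$, then left-to-right parsing is unambiguous (prefix-free codes are uniquely decodable), so the first $|u|$ code words must be $f(u_1), \ldots, f(u_{|u|})$, forcing $x^{|u|} = u$.

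Once the identity $\mu([u]) = \nu([f^*(u)])$ is in hand, the conclusion is immediate by applying it to both $[u]$ and $[uv]$ and invoking the $(K,c)$-energy of $\nu$:
\begin{align*}
  \mu([uv]) = \nu([f^*(u)\,f^*(v)]) \le K c^{|f^*(v)|} \nu([f^*(u)]) = K c^{|f^*(v)|} \mu([u]),
\end{align*}
which is precisely (\ref{gFE}).

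There is no real obstacle here; the only point that requires care is making sure the set-theoretic equality between cylinders on the $\mathbb{X}$- and $\mathbb{Y}$-sides really is an equality (not just one inclusion), which is exactly where the prefix-free hypothesis is used. Injectivity of $f^*$ alone would give $\mu([u]) \le \nu([f^*(u)])$; the prefix-free property is what also rules out the possibility that $Y^{|f^*(u)|} = f^*(u)$ arises from some other encoding of a different $\mathbb{X}$-string ``straddling'' position $|f^*(u)|$.
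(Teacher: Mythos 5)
Your proof is correct and follows essentially the same route as the paper's: both hinge on the identity $\mu([uv])=\nu([f^*(u)f^*(v)])$, i.e.\ that for a prefix-free $f$ the preimage under $f^{\mathbb{Z}}$ of the cylinder $[f^*(u)f^*(v)]$ is exactly the cylinder $[uv]$, after which the $(K,c)$-energy bound for $\nu$ transports directly into (\ref{gFE}). You merely spell out the unique-decodability argument behind that set equality, which the paper states without elaboration.
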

\begin{proof}
  Let $\nu([zw])\le Kc^{\abs{w}} \nu([z])$. The extension $f^*$ is an
  injection for a~prefix-free $f$. Moreover, if both $z$ and $w$
  belong to $f^*(\mathbb{X}^*)$, then
  $$\okra{f^{\mathbb{Z}}}^{-1}([zw])=[\okra{f^*}^{-1}(z)\okra{f^*}^{-1}(w)].$$ 
  In particular, for $z=f^*(u)$ and $w=f^*(v)$, we obtain
\begin{align*}
  \mu([uv])
  =\nu([zw])
  &
  \le Kc^{\abs{w}} \nu([z])
  = Kc^{\abs{f^*(v)}} \mu([u])
  .
\end{align*}
\end{proof}

The converse of Proposition \ref{theoFENugFEMu} is valid under additional
restrictions.
%
%
%
Denote the difference of strings $w,z\in\mathbb{Y}^*$ as
\begin{align*}
  w\stringminus z:=
  \begin{cases}
    s & \text{if } w=zs, s\in\mathbb{Y}^+,
    \\
    \lambda & \text{if } z=ws, s\in\mathbb{Y}^*,
    \\
    w & \text{else}.
  \end{cases}
\end{align*}
We choose this definition to have $p^{\abs{w\stringminus z}}\le
p^{\abs{w}-\abs{z}}$ for $p\in(0,1)$.  The set of $z$'s that fall
under the first two cases is denoted as
\begin{align*}
  \dzi{w}:=
  \klam{z\in\mathbb{Y}^*: \exists_{s\in\mathbb{Y}^*}: w=zs \lor z=ws}.
\end{align*}
Moreover, for a~set $\mathcal{L}\subset\mathbb{Y}^*$, we define the
remainder $w_{\mathcal{L}}$ of a~string $w\in\mathbb{Y}^*$ as the
shortest element of the set $\klam{w\stringminus s:s\in \mathcal{L}^*,
  \abs{s}\le \abs{w}}$. The completion set of the string $w$ with
respect to the set $\mathcal{L}$ is defined as
\begin{align*}
  \mathcal{L}_w:=
    \klam{s\in\mathbb{Y}^*: 
      w_{\mathcal{L}}s \in\mathcal{L}}.
\end{align*}
This set is prefix-free for a~prefix-free $\mathcal{L}$.

\begin{proposition}
  \label{theogFEMuFENu}
  Suppose that $f:\mathbb{X}\rightarrow\mathbb{Y}^*$ is prefix-free
  and a~measure $\mu$ on
  $(\mathbb{X}^{\mathbb{Z}},\mathcal{X}^{\mathbb{Z}})$ has
  $(K,c,f)$-energy. For $\mathcal{L}=f(\mathbb{X})$, put
  \begin{align*}
    M_f(p)&:=\sup_{w\in\mathbb{Y}^*:\mathcal{L}_w\not=\emptyset} \sum_{s\in \mathcal{L}_w}
    p^{\abs{s}} ,
    \\
    N_{f,\mu}(p)&:=
    \sup_{w\in \mathbb{Y}^*:\mathcal{L}_w\not=\emptyset} 
    \frac{
      \sum_{s\in\mathcal{L}_w} p^{-\abs{s}} \mu([\okra{f^*}^{-1}(ws)])
    }{
      \sum_{s\in\mathcal{L}_w} \mu([\okra{f^*}^{-1}(ws)])
    }
    .
  \end{align*}
  If $M_f(c)<\infty$ and $N_{f,\mu}(c_2)<\infty$ for a~certain $c_2\in
  [c,1)$, then the measure $\nu=\mu\circ \okra{f^{\mathbb{Z}}}^{-1}$
  on $(\mathbb{Y}^{\mathbb{Z}},\mathcal{Y}^{\mathbb{Z}})$ has $(\tilde
  K, c_2)$-energy, where $\tilde K=N_{f,\mu}(c_2) M_f(c) K$.
\end{proposition}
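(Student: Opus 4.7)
The plan is to bound $\nu([wz])$ for arbitrary $w, z \in \mathbb{Y}^*$ using the parse-based identity
\begin{align*}
\nu([w']) = \sum_{s \in \mathcal{L}_{w'}} \mu([(f^*)^{-1}(w's)]) ,
\end{align*}
valid for every $w' \in \mathbb{Y}^*$. This identity holds because prefix-freeness of $f$ makes the parse of fully-parseable strings into codewords unique, so $(f^{\mathbb{Z}})^{-1}([w'])$ decomposes disjointly into the cylinders indexed by the minimal completions $s \in \mathcal{L}_{w'}$ that make $w's$ lie in $\mathcal{L}^* = f^*(\mathbb{X}^*)$.

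For each $s' \in \mathcal{L}_{wz}$, introduce the boundary-crossing completion $s_w = s_w(s') \in \mathcal{L}_w$ defined by requiring $ws_w$ to be the shortest prefix of $wzs'$ that contains $w$ and lies in $\mathcal{L}^*$; writing $wzs' = ws_w \cdot v$ with $v \in \mathcal{L}^*$ gives $\abs{v} = \abs{z} + \abs{s'} - \abs{s_w}$. The $(K,c,f)$-energy hypothesis then yields $\mu([(f^*)^{-1}(wzs')]) \le K c^{\abs{v}} \mu([(f^*)^{-1}(ws_w)])$. Split $\mathcal{L}_{wz}$ into Case~A, where $s_w$ is a prefix of $z$, and Case~B, where $z$ is a proper prefix of $s_w$; prefix-freeness forces Case~A to contain a single value $s_w^A$ (with $s'$ ranging over $\mathcal{L}_{z'}$, $z' := z \stringminus s_w^A$), while in Case~B the equality $s_w = zs'$ holds and $\abs{v}=0$. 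The inequality $\sum_{s' \in \mathcal{L}_{z'}} c^{\abs{s'}} \le M_f(c)$ bounds the Case~A sum by $K M_f(c) c^{\abs{z}-\abs{s_w^A}} \mu([(f^*)^{-1}(ws_w^A)])$, while Case~B contributes the trivial sum $\sum_{s_w} \mu([(f^*)^{-1}(ws_w)])$ over its $s_w$'s.

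To merge the two cases under a common factor $c_2^{\abs{z}}$, exploit $c \le c_2 < 1$ and the sign of $\abs{z}-\abs{s_w}$: in Case~A, $c^{\abs{z}-\abs{s_w^A}} \le c_2^{\abs{z}-\abs{s_w^A}} = c_2^{\abs{z}} c_2^{-\abs{s_w^A}}$, since the exponent is nonnegative; in Case~B, $1 \le c_2^{\abs{z}-\abs{s_w}} = c_2^{\abs{z}} c_2^{-\abs{s_w}}$, since the exponent is negative. After this rescaling each contributing $s_w \in \mathcal{L}_w$ is majorized by a constant depending only on $K$ and $M_f(c)$ times $c_2^{\abs{z}} c_2^{-\abs{s_w}} \mu([(f^*)^{-1}(ws_w)])$; extending the sum over all $s_w \in \mathcal{L}_w$ and invoking the defining inequality
\begin{align*}
\sum_{s_w \in \mathcal{L}_w} c_2^{-\abs{s_w}} \mu([(f^*)^{-1}(ws_w)]) \le N_{f,\mu}(c_2) \nu([w])
\end{align*}
of $N_{f,\mu}(c_2)$ delivers $\nu([wz]) \le \tilde K c_2^{\abs{z}} \nu([w])$, i.e., the claimed $(\tilde K, c_2)$-energy of $\nu$. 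The main obstacle is the case split together with the sign change in $\abs{z}-\abs{s_w}$; once both cases are rewritten with the common factor $c_2^{\abs{z}-\abs{s_w}}$, the defining bounds of $M_f$ and $N_{f,\mu}$ close the argument.
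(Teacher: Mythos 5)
Your argument is correct and follows essentially the same route as the paper: both decompose the cylinder over the completions in $\mathcal{L}_{(\cdot)}$, split according to whether the codeword boundary falls inside or beyond the appended block (which governs the sign of $\abs{z}-\abs{s_w}$ and hence the direction of the comparison between powers of $c$ and $c_2$), and close with the defining bounds of $M_f$ and $N_{f,\mu}$. The only difference is organizational — the paper bootstraps through the intermediate cases $z,w\in\mathcal{L}^*$ and $z\in\mathcal{L}^*$ before treating arbitrary $z,w$, whereas you group the terms of the full decomposition directly — and this does not change the substance of the proof.
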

\begin{proof}
  If $z=f^*(u)$ and $w=f^*(v)$ for some $u$ and $v$, then 
\begin{align*}
  \nu([zw])
  = \mu([uv])
  &
  \le Kc^{\abs{f^*(v)}} \mu([u])
  = Kc^{\abs{w}} \nu([z])
  .
\end{align*}
Notice that $\nu([z])=\sum_{s\in \mathcal{L}_z} \nu([zs])$ for any
$z\in\mathbb{Y}^*$.  Assume now that $z\in \mathcal{L}^*$ and let $w$
be arbitrary. By $\mathcal{L}_{zw}=\mathcal{L}_w$ we obtain
\begin{align*}
  \nu([zw])
  &
  =\sum_{s\in\mathcal{L}_w} \nu([zws])
  \le \sum_{s\in\mathcal{L}_w} Kc^{\abs{ws}} \nu([z])
  \le M_f(c) Kc^{\abs{w}} \nu([z])
  .
\end{align*}
Eventually, consider arbitrary $z$ and $w$.   We have
\begin{align*}
  \nu([zw]) 
  &=
  \sum_{s\in\mathcal{L}_z\cap \dzi{w}} \nu([zs(w\stringminus s)]) 
  \le
  \sum_{s\in\mathcal{L}_z\cap \dzi{w}} M_f(c) K c^{\abs{w\stringminus s}} \nu([zs])
  \\
  &\le 
  \sum_{s\in\mathcal{L}_z} M_f(c) K c_2^{\abs{w\stringminus s}} \nu([zs]) 
  \le 
  M_f(c) K c_2^{\abs{w}} \sum_{s\in\mathcal{L}_z} c_2^{-\abs{s}} \nu([zs])
  \\
  &\le \tilde K c_2^{\abs{w}} \nu([z]) 
  .
\end{align*}
\end{proof}

\begin{corollary}
  \label{theogFEMuFENuFin}
  If $f:\mathbb{X}\rightarrow\mathbb{Y}^*$ is finite prefix-free and
  a~measure $\mu$ on
  $(\mathbb{X}^{\mathbb{Z}},\mathcal{X}^{\mathbb{Z}})$ has finite
  energy, then the measure $\nu=\mu\circ \okra{f^{\mathbb{Z}}}^{-1}$ on
  $(\mathbb{Y}^{\mathbb{Z}},\mathcal{Y}^{\mathbb{Z}})$ has finite
  energy.
\end{corollary}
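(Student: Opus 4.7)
The plan is to apply Proposition \ref{theogFEMuFENu} by converting the $(K_0,c_0)$-energy of $\mu$ into $(K,c,f)$-energy and then showing that the two supremum quantities $M_f(c)$ and $N_{f,\mu}(c_2)$ are both finite thanks to the finiteness of the code $f$. Write $L := \max_{x\in\mathbb{X}} \abs{f(x)} < \infty$ and $\mathcal{L} := f(\mathbb{X})$.

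First I would promote the standard finite-energy bound to a $(K,c,f)$-energy bound. Since $f$ is prefix-free, $\abs{f^*(v)} \le L\abs{v}$, so setting $c := c_0^{1/L}$ gives $c^{\abs{f^*(v)}} \ge c^{L\abs{v}} = c_0^{\abs{v}}$, whence
\[
  \mu([uv]) \le K_0 c_0^{\abs{v}} \mu([u]) \le K_0 c^{\abs{f^*(v)}} \mu([u]),
\]
i.e., $\mu$ has $(K_0,c,f)$-energy with $c<1$.

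Next I would check the two structural suprema. For $M_f(c)$, observe that because $\mathcal{L}$ is finite and prefix-free, the remainder $w_{\mathcal{L}}$ must be a strict prefix of some codeword (or empty), so there are only finitely many distinct values of $w_{\mathcal{L}}$, and for each of them $\mathcal{L}_w = \{s : w_{\mathcal{L}}s\in\mathcal{L}\}$ is a finite set of strings of length at most $L$. Thus $M_f(c) \le \max_w \sum_{s\in\mathcal{L}_w} c^{\abs{s}} < \infty$. For $N_{f,\mu}(c_2)$, take any $c_2\in[c,1)$ and notice that every $s\in\mathcal{L}_w$ satisfies $\abs{s}\le L$, so $c_2^{-\abs{s}} \le c_2^{-L}$ uniformly; hence the ratio defining $N_{f,\mu}(c_2)$ is bounded by the constant $c_2^{-L}$, independently of $w$ and of $\mu$.

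Finally, I would invoke Proposition \ref{theogFEMuFENu} with the data $(K_0,c,f)$, $c_2 = c$, and the constants just bounded: it yields $(\tilde K, c)$-energy for $\nu = \mu\circ(f^{\mathbb{Z}})^{-1}$ with $\tilde K := N_{f,\mu}(c) M_f(c) K_0 < \infty$, which is exactly finite energy. The only step requiring any care is the length-conversion in the first paragraph, which is where both the prefix-freeness of $f$ (to ensure $c>\abs{\mathbb{Y}}^{-1}$ is not forced to fail) and the finiteness of $f$ (to keep $L$ finite so that $c_0^{1/L} < 1$) are simultaneously used; apart from this the argument is purely bookkeeping.
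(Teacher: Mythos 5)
Your proof is correct and takes essentially the same route as the paper's: promote the $(K_0,c_0)$-energy of $\mu$ to $(K_0,c_0^{1/L},f)$-energy using the maximal codeword length $L$, note that finiteness of $f(\mathbb{X})$ makes $M_f$ and $N_{f,\mu}$ finite, and invoke Proposition \ref{theogFEMuFENu}. The only difference is cosmetic: you make the bounds $M_f(c)\le\abs{\mathcal{L}}$ and $N_{f,\mu}(c)\le c^{-L}$ explicit, which the paper leaves implicit.
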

\begin{proof}
  Assume that $\mu$ has $(K,q)$-energy. We have
  $\abs{\mathcal{L}_w}\le \abs{\mathcal{L}}<\infty$ and
  $\sup_{z\in\mathcal{L}_w}\abs{z}\le
  \sup_{z\in\mathcal{L}}\abs{z}<\infty$. Hence we obtain inequalities
  $M_f(p)<\infty$, $N_{f,\mu}(p)<\infty$, and (\ref{gFE}) for $p<1$
  and $c\in [\max_{z\in\mathcal{L}} q^{1/\abs{z}},1)$. In consequence,
  the claim follows by Proposition \ref{theogFEMuFENu}.
\end{proof}

Below we present a~more specific example with an infinite image
$f(\mathbb{X})$.
\begin{corollary}
  \label{theogFEMuFENuInf}
  Let $f:\mathbb{X}=\mathbb{N}\mapsto
  \mathbb{Y}^+=\klam{0,1,2}^+$ be given as
  \begin{align*}
    f(k)=b(k)w(k)2
    ,
  \end{align*}
  where $1b(k)\in\klam{0,1}^+$ is the binary representation of
  a~natural number $k$, and $w(k)\in\klam{0,1}^*$ is a~string of fixed
  length, $\abs{w(k)}=A$.  Let also $\mu$ be a~measure on
  $(\mathbb{X}^{\mathbb{Z}},\mathcal{X}^{\mathbb{Z}})$ that satisfies
  $\mu([vu])=\mu([v])\mu([u])$ and
  \begin{align*}
    \mu([k])= \frac{k^{-\alpha}}{\zeta(\alpha)}, 
    \quad k\in\mathbb{N},
  \end{align*}
  for some $\alpha>1$.  If $\zeta(\alpha)> 2^{A+1}$, then $\mu$
  has $(1, c, f)$-energy for $c\in
  [\max\klam{2^{-\alpha},(\zeta(\alpha))^{-1/(A+1)}},2^{-1})$, whereas
  $\nu=\mu\circ \okra{f^{\mathbb{Z}}}^{-1}$ has $(\tilde K,
  c_2)$-energy for $\tilde K=N_{f,\mu}(c_2) M_f(c)$ and
  $c_2\in(\max\klam{c,2^{1-\alpha}},1)$.
\end{corollary}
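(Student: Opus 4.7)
The plan is to reduce the corollary to verifying the three hypotheses of Proposition~\ref{theogFEMuFENu}: that $\mu$ has $(1,c,f)$-energy, that $M_f(c) < \infty$, and that $N_{f,\mu}(c_2) < \infty$ for a suitable $c_2$. For the first, the i.i.d.\ structure $\mu([uv]) = \mu([u])\mu([v])$ and the additivity of $\abs{f^*(\cdot)}$ reduce $(1,c,f)$-energy to the single-letter bound $\mu([k]) \le c^{\abs{f(k)}}$ for every $k \in \mathbb{N}$. Using $\abs{f(k)} = \abs{b(k)} + A + 1$ and $k \ge 2^{\abs{b(k)}}$, this follows from $(2^\alpha c)^{\abs{b(k)}}\zeta(\alpha) c^{A+1} \ge 1$. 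For $c \ge 2^{-\alpha}$ the first factor is $\ge 1$, and the residual inequality $\zeta(\alpha) c^{A+1} \ge 1$ amounts to $c \ge \zeta(\alpha)^{-1/(A+1)}$; the hypothesis $\zeta(\alpha) > 2^{A+1}$ then guarantees $\zeta(\alpha)^{-1/(A+1)} < 1/2$, so the prescribed interval for $c$ is nonempty.

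For $M_f(c)$, I observe that a nonempty $\mathcal{L}_w$ can only arise when $r := w_\mathcal{L}$ is a binary prefix (not ending in $2$) of some codeword, and then $\sum_{s \in \mathcal{L}_w} c^{\abs{s}} = \sum_{k:\, r \preceq f(k)} c^{\abs{f(k)}-\abs{r}}$. I would split by $j := \abs{b(k)}$: when $j \ge \abs{r}$, $r$ sits inside $b(k)$ and there are exactly $2^{j-\abs{r}}$ admissible $k$ at level $j$; when $j < \abs{r}$, the prefix $r$ determines $b(k)$ and then constrains $w(k)$, so at most one $k$ per level contributes. The resulting bound
\begin{align*}
  \sum_{s \in \mathcal{L}_w} c^{\abs{s}} \le \sum_{m=1}^{A} c^m + c^{A+1} \sum_{m \ge 0} (2c)^m
\end{align*}
is uniform in $r$ and finite iff $c < 1/2$. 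The same case split applies to $N_{f,\mu}(c_2)$: after cancelling the factor attached to the $\mathcal{L}^*$-prefix of $w$, the defining ratio collapses to
\begin{align*}
  \frac{\sum_{k:\, r \preceq f(k)} c_2^{-(\abs{f(k)}-\abs{r})}\, k^{-\alpha}}{\sum_{k:\, r \preceq f(k)} k^{-\alpha}},
\end{align*}
and the $j \ge \abs{r}$ contribution, estimated using $k \asymp 2^j$, becomes a geometric series in $2^{1-\alpha}/c_2$ in the numerator matched in order of magnitude by the denominator, converging precisely when $c_2 > 2^{1-\alpha}$; the $j < \abs{r}$ contribution adds at most an $O(c_2^{-A})$ factor.

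Combining the three bounds with $K = 1$, Proposition~\ref{theogFEMuFENu} yields the $(\tilde K, c_2)$-energy of $\nu = \mu \circ \okra{f^{\mathbb{Z}}}^{-1}$ with $\tilde K = N_{f,\mu}(c_2) M_f(c)$ for every $c_2$ in the stated range. The main obstacle is the careful enumeration of codewords with a prescribed binary prefix, in particular the borderline regime where $r$ straddles the $b(k)/w(k)$ juncture so that only $O(1)$ codewords per length contribute; once this combinatorics is pinned down, the remaining estimates are routine geometric-series computations.
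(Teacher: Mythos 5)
Your proposal is correct and follows essentially the same route as the paper: establish $(1,c,f)$-energy from the single-letter bound $\mu([k])\le c^{\abs{f(k)}}$ via the product structure, count the completions in $\mathcal{L}_w$ by length (at most one per length up to $A+1$, then $2^{l-(A+1)}$ thereafter) to bound $M_f(c)$ and $N_{f,\mu}(c_2)$ by geometric series converging for $c<1/2$ and $c_2>2^{1-\alpha}$ respectively, and invoke Proposition~\ref{theogFEMuFENu}. The paper's proof is just a more condensed version of the same enumeration.
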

\begin{proof}
  We have $\abs{f(k)}=\floor{\log_2 k}+1+\abs{w(k)}$ and
  $k^{-\alpha}=(2^{-\alpha})^{\log_2 k}$.
  Thus,
  \begin{align*}
     (\zeta(\alpha))^{-\abs{u}} (2^{-\alpha})^{\abs{f^*(u)}-A\abs{u}} 
    \le
    \mu([u])
    &
    \le (\zeta(\alpha))^{-\abs{u}} (2^{-\alpha})^{\abs{f^*(u)}-(A+1)\abs{u}}
    \le c^{\abs{f^*(u)}}
    .
  \end{align*}
  In particular, (\ref{gFE}) follows for $K=1$. Consider a~string
  $w\in \mathbb{Y}^*$ and let $a_l$ be the number of strings of length
  $l$ in the set $\mathcal{L}_w$. We can see that $a_l\le 1$ for $l\le
  A+1$, whereas $a_l= 2^{l-(A+1)}$ for $l> A+1$ if $\mathcal{L}_w$ is
  not empty. Hence, $M_f(c)
  \le \sum_{l=0}^\infty \max\klam{1,2^{l-(A+1)}} c^{l}<\infty$ and
  \begin{align*}
    N_{f,\mu}(c_2)
    \le
    \frac{
      \sum_{l=0}^\infty \max\klam{1,2^{l-(A+1)}} c_2^{-l} (2^{-\alpha})^{l-1}
    }{
      \sum_{l=A+1}^\infty 2^{l-(A+1)} (2^{-\alpha})^{l}
    }    
    <\infty
    .
  \end{align*}
  So the claim holds by Proposition \ref{theogFEMuFENu}.
\end{proof}

By means of the following two simple statements, the above result can
be extended to certain nonergodic measures, including the distribution
of process (\ref{UDPX}) and its stationary variable-length coding.
\begin{proposition}
  \label{theoFENuInt}
  Consider a~measure $P$ on $(\Omega,\mathcal{J})$ and a~probability
  kernel $\tau$ from $(\Omega,\mathcal{J})$ to
  $(\mathbb{Y}^{\mathbb{Z}},\mathcal{Y}^{\mathbb{Z}})$ (i.e.,
  $\tau(\cdot,\omega)$ is a~measure on
  $(\mathbb{Y}^{\mathbb{Z}},\mathcal{Y}^{\mathbb{Z}})$ for $P$-almost
  all $\omega\in\Omega$, and the function $\tau(A,\cdot)$ is measurable
  $\mathcal{J}$ for each $A\in\mathcal{Y}^{\mathbb{Z}}$). If
  $\tau(\cdot,\omega)$ has $(K,c)$-energy for $P$-almost all
  $\omega\in\Omega$, then so does the measure $\int
  \tau(\cdot,\omega)dP(\omega)$.
\end{proposition}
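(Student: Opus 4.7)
The plan is to obtain the finite-energy inequality for the mixture measure by integrating the pointwise inequality satisfied by the kernel. Fix arbitrary strings $u,v\in\mathbb{Y}^*$. By the hypothesis, there is a set $\Omega_{u,v}\in\mathcal{J}$ with $P(\Omega_{u,v})=1$ such that $\tau([uv],\omega)\le Kc^{\abs{v}}\tau([u],\omega)$ for every $\omega\in\Omega_{u,v}$. Since $\mathbb{Y}^*$ is countable, the intersection $\Omega_0:=\bigcap_{u,v\in\mathbb{Y}^*}\Omega_{u,v}$ still has full $P$-measure, so a single null-set exception suffices uniformly in $u,v$.

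Next, denote the mixture by $\mu(A):=\int \tau(A,\omega)\,dP(\omega)$, which is well defined as a measure on $(\mathbb{Y}^{\mathbb{Z}},\mathcal{Y}^{\mathbb{Z}})$ by the kernel assumption and the standard existence of mixtures (measurability of $\tau(A,\cdot)$ and countable additivity via the monotone convergence theorem). For each pair $(u,v)$, the functions $\omega\mapsto\tau([uv],\omega)$ and $\omega\mapsto\tau([u],\omega)$ are $\mathcal{J}$-measurable, so I can integrate both sides of the inequality on $\Omega_0$ and use monotonicity and positive homogeneity of the Lebesgue integral:
\begin{align*}
\mu([uv])=\int \tau([uv],\omega)\,dP(\omega)\le Kc^{\abs{v}}\int \tau([u],\omega)\,dP(\omega)=Kc^{\abs{v}}\mu([u]).
\end{align*}
Since $u,v$ were arbitrary, this is exactly condition (\ref{FE}) with the same constants $(K,c)$, so $\mu$ has $(K,c)$-energy.

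There is essentially no hard step: the argument is a one-line integration after one takes care of the routine point that a countable intersection of full-measure sets remains of full measure, so the pointwise bound holds uniformly in $(u,v)$ off a single $P$-null set. The only mild subtlety worth flagging is the well-definedness and countable additivity of the mixture $\mu$, which follows from the kernel property and monotone convergence and is therefore not really an obstacle.
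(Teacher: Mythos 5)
Your proof is correct and takes essentially the same approach as the paper: integrate the almost-sure pointwise inequality $\tau([uv],\omega)\le Kc^{\abs{v}}\tau([u],\omega)$ against $P$ and pull the constant out. The paper's proof is exactly this one-line integration; your extra remark that the exceptional null sets can be collected into a single one by countability of $\mathbb{Y}^*$ is a sound (and worthwhile) piece of bookkeeping that the paper leaves implicit.
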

\begin{proof}
\begin{align*}
  \textstyle
  \int \tau([zw],\omega)dP(\omega)
  &
  \textstyle
  \le 
  \int Kc^{\abs{w}} \tau([w],\omega)dP(\omega) 
  \le 
  Kc^{\abs{w}} \int \tau([w],\omega)dP(\omega)
  .
\end{align*}  
\end{proof}

\begin{proposition}
  \label{theoFENuBar}
  If an AMS measure $\nu$ on
  $(\mathbb{Y}^{\mathbb{Z}},\mathcal{Y}^{\mathbb{Z}})$ has
  $(K,c)$-energy, then so does the measure $\bar\nu$.
\end{proposition}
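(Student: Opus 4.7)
The plan is to show that the $(K,c)$-energy inequality is preserved under each shift $T^{-i}$ and hence under Ces\`aro averaging.

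Fix cylinders $[u]$ and $[uv]$. Since $\nu$ is AMS, both limits $\bar\nu([u])$ and $\bar\nu([uv])$ exist and equal the corresponding Ces\`aro averages of $\nu\circ T^{-i}$. The strategy is therefore to establish, for every $i\ge 0$,
\begin{align*}
  \nu(T^{-i}[uv])\le K c^{\abs{v}}\,\nu(T^{-i}[u]),
\end{align*}
and then average over $i=0,1,\dots,n-1$ and pass to the limit $n\to\infty$.

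The key observation is the decomposition of a shifted cylinder into unshifted ones: unwinding the definitions gives $T^{-i}[u]=\bigsqcup_{w\in\mathbb{Y}^i}[wu]$ and likewise $T^{-i}[uv]=\bigsqcup_{w\in\mathbb{Y}^i}[wuv]$, a countable disjoint union. For each individual $w\in\mathbb{Y}^i$ I apply the $(K,c)$-energy hypothesis to the pair $(wu,v)$ to obtain $\nu([wuv])\le K c^{\abs{v}}\nu([wu])$. Summing the inequality over $w$ (which is legitimate by countable additivity, and whose right-hand side remains bounded by $Kc^{\abs{v}}$) yields the shifted inequality displayed above.

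With that in hand, dividing by $n$ and summing over $i$ gives
\begin{align*}
  \frac{1}{n}\sum_{i=0}^{n-1}\nu(T^{-i}[uv])\le K c^{\abs{v}}\cdot
  \frac{1}{n}\sum_{i=0}^{n-1}\nu(T^{-i}[u]),
\end{align*}
and letting $n\to\infty$ in both Ces\`aro limits (which exist by AMS) yields $\bar\nu([uv])\le K c^{\abs{v}}\bar\nu([u])$, i.e.\ $\bar\nu$ has $(K,c)$-energy. No real obstacle is expected: the only thing to double-check is that the disjoint-union/sum decomposition behaves correctly when $\mathbb{Y}$ is countably infinite, which is immediate from $\sigma$-additivity and the fact that $\nu$ is a probability measure.
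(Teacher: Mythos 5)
Your proof is correct and follows essentially the same route as the paper's: decompose the shifted cylinder $T^{-i}[uv]$ into the disjoint union $\bigsqcup_{w\in\mathbb{Y}^i}[wuv]$, apply the $(K,c)$-energy inequality termwise, sum, and pass to the Ces\`aro limit. No gaps.
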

\begin{proof}
\begin{align*}
  \bar\mu([zw])
  &
  = \textstyle
  \lim_n n^{-1}\sum_{i=0}^{n-1}\sum_{s\in\mathbb{Y}^i}\mu([szw])
  \\
  &
  \le \textstyle
  Kc^{\abs{w}} 
  \lim_n n^{-1}\sum_{i=0}^{n-1}\sum_{s\in\mathbb{Y}^i}\mu([sz])
  \le 
  Kc^{\abs{w}} \bar\mu([z])
  .
\end{align*}  
\end{proof}

\section{Block entropies of stationary means}
\label{secEntropy}

For a~stationary measure $\mu$, block entropy $H_\mu(n)=H_\mu(0;n)$
defined in (\ref{BlockEntropy}) is a~nonnegative, growing, and concave
function of $n$, see \cite{CrutchfieldFeldman01}. Hence the limit
\begin{align}
  \label{hBarMu}
   h_\mu:=\lim_{n\rightarrow\infty}\frac{H_\mu(n)}{n}
   ,
\end{align}
known as the entropy rate, exists in that case.  Whereas block entropy
behaves less regularly in a~general AMS case, we can bound the block
entropy of the stationary mean in the following way.
\begin{proposition}
  \label{theoHBarMuHMu}
  For an AMS measure $\mu$,
  \begin{align}
   \label{HBarMuHMu}
   H_{\bar\mu}(m)\ge\limsup_{n\rightarrow\infty} 
    \frac{1}{n}\sum_{i=0}^{n-1}H_\mu(i;m)
    .
  \end{align}
\end{proposition}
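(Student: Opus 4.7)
The approach is to introduce the finite-$n$ averaged measures $\bar\mu_n := n^{-1}\sum_{i=0}^{n-1}\mu\circ T^{-i}$ and exploit the concavity of $\phi(x):=-x\log x$ on $[0,1]$. By the AMS hypothesis (\ref{BarMu}) we have cylinder-wise convergence $\bar\mu_n([u])\to\bar\mu([u])$ for every $u\in\mathbb{X}^m$.

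For each fixed $u\in\mathbb{X}^m$ and each $n\ge 1$, Jensen's inequality applied to the uniform probability mixture yields
\begin{align*}
\phi(\bar\mu_n([u]))\;\ge\;\frac{1}{n}\sum_{i=0}^{n-1}\phi(\mu(T^{-i}[u])).
\end{align*}
Summing over $u\in\mathbb{X}^m$ (Tonelli permits exchanging the finite sum over $i$ with the sum over $u$) produces the key finite-$n$ estimate
\begin{align*}
\widetilde H_n(m)\;:=\;\sum_{u\in\mathbb{X}^m}\phi(\bar\mu_n([u]))\;\ge\;\frac{1}{n}\sum_{i=0}^{n-1}H_\mu(i;m).
\end{align*}
Taking $\limsup_n$ of both sides reduces the proposition to verifying the upper-semicontinuity statement $\limsup_n\widetilde H_n(m)\le H_{\bar\mu}(m)$.

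When $\mathbb{X}$ is finite, this final step is immediate: $\widetilde H_n(m)$ is a continuous function of the finitely many probabilities $\{\bar\mu_n([u])\}_{u\in\mathbb{X}^m}$, so $\widetilde H_n(m)\to H_{\bar\mu}(m)$ by continuity of $\phi$ together with the cylinder-wise convergence above, and the proposition follows. The main obstacle arises for countably infinite $\mathbb{X}$, because block entropy is only lower semicontinuous under setwise convergence in general. To handle it, I would first reduce to the case $H_{\bar\mu}(m)<\infty$ (otherwise the claim is trivial), then truncate $\mathbb{X}^m$ to a~large finite subset $\mathbb{X}_0$ on which continuity delivers the bound, and control the tail $\mathbb{X}^m\setminus\mathbb{X}_0$ using the uniform bound $\phi\le e^{-1}$ together with Scheffé-type total-variation convergence $\sum_u|\bar\mu_n([u])-\bar\mu([u])|\to 0$ (which follows from setwise convergence of probability measures on a~countable space). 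This truncation/reverse-Fatou step is the delicate part of the argument.
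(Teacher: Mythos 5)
Your Jensen step is exactly the paper's (very terse) argument, and you are right that it settles the case of a finite alphabet; the paper itself does not acknowledge that anything more is needed. The trouble is the final step you flag as ``delicate'': it is not merely delicate, it fails. What you need is the upper-semicontinuity bound $\limsup_n\sum_u\phi(\bar\mu_n([u]))\le\sum_u\phi(\bar\mu([u]))$, and, as you yourself note, entropy on a countably infinite alphabet is only \emph{lower} semicontinuous under setwise (indeed even total-variation) convergence. Scheff\'e-type convergence controls the tail \emph{mass} $\sum_{u\notin\mathbb{X}_0}\bar\mu_n([u])$, but the tail \emph{entropy} $\sum_{u\notin\mathbb{X}_0}\phi(\bar\mu_n([u]))$ is not controlled by the tail mass: a mass $\delta$ spread uniformly over $e^{K/\delta}$ cylinders contributes at least $K$ to the entropy while contributing only $\delta$ to the total variation, and the pointwise bound $\phi\le e^{-1}$ is useless over an infinite index set. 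So the truncation/reverse-Fatou step cannot be completed from the hypotheses available.

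In fact no repair is possible in general, because for countably infinite $\mathbb{X}$ the statement itself fails (the paper's one-line proof silently suffers from the same gap). Take $m=1$, $\mathbb{X}=\klam{0}\cup\bigcup_{j\ge 1}A_j$ with $\abs{A_j}=N_j$, $\log N_j=4^j$, and a marginal $\pi$ with $\pi(A_j)\propto 8^{-j}$ spread uniformly on $A_j$, so that $H(\pi)<\infty$. Put $p_i=(1-2^{-i})\pi+2^{-i}u_i$ for $i\ge 1$ ($u_i$ uniform on $A_i$) and $p_i=\pi$ for $i\le 0$, and let $\mu=\bigotimes_i p_i$, $\tau=\bigotimes_i\pi$. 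Since $\sum_i 2^{-i}<\infty$, Kakutani's dichotomy gives $\mu\sim\tau$, so $\mu$ is AMS by Lemma \ref{theoDomin} with $\bar\mu=\tau$ and $H_{\bar\mu}(1)=H(\pi)<\infty$; yet concavity of entropy gives $H_\mu(i;1)=H(p_{i+1})\ge 2^{-(i+1)}\log N_{i+1}=2^{i+1}$, so the right-hand side of (\ref{HBarMuHMu}) is infinite. The proposition (and both your argument and the paper's) is sound once $\mathbb{X}$ is finite, or under an extra hypothesis yielding uniform integrability of $u\mapsto-\log\bar\mu_n([u])$ along the Ces\`aro averages; you should impose such a restriction rather than pursue the truncation argument.
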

\begin{proof}
  The claim
  \begin{align*}
    - \sum_{u\in\mathbb{X}^m} 
    \bar\mu([u]) \log \bar\mu([u])
    &\ge
    \limsup_{n\rightarrow\infty}  
    \kwad{
      -  \frac{1}{n} \sum_{i=0}^{n-1} 
      \sum_{u\in\mathbb{X}^m} 
      \mu(T^{-i}[u]) \log \mu(T^{-i}[u])
    }
  \end{align*}
  follows by the Jensen inequality for the function $p\mapsto -p\log
  p$.
\end{proof}
\begin{proposition}
  \label{theohBarMuhMu}
  Let $\mu$ be AMS with $h_{\bar\mu}<\infty$ and $H_\mu(n)<\infty$ for
  all $n$. Then
  \begin{align}
    \label{hBarMuhMu}
    h_{\bar\mu} \le \liminf_{n\rightarrow\infty} \frac{H_\mu(n)}{n}
    .
  \end{align}
\end{proposition}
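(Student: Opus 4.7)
The plan is to reduce the inequality to the Shannon--McMillan--Breiman theorem (SMB) for the stationary mean $\bar\mu$, exploiting the absolute continuity $\mu\ll\bar\mu$ guaranteed by Lemma~\ref{theoDomin}. Let $f_n=\sred_{\bar\mu}\kwad{d\mu/d\bar\mu\mid\sigma(X_1,\dots,X_n)}$ be the martingale of densities of $\mu$ against $\bar\mu$ on the first $n$ coordinates, so that $\mu([u])=f_n(u)\bar\mu([u])$ for $u\in\mathbb{X}^n$. Expanding $H_\mu(n)=\sred_\mu\kwad{-\log\mu([X_1^n])}$ and substituting $-\log\mu([X_1^n])=-\log f_n(X_1^n)-\log\bar\mu([X_1^n])$ yields the clean decomposition
\[
H_\mu(n)=\sred_\mu\kwad{-\log\bar\mu([X_1^n])}-D(\mu_n\|\bar\mu_n),
\]
where $\mu_n,\bar\mu_n$ denote the $n$-block marginals and $D$ is the Kullback--Leibler divergence.

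For the cross-entropy term I would invoke SMB under $\bar\mu$: because $\bar\mu$ is stationary with $h_{\bar\mu}<\infty$, the surprisal $-n^{-1}\log\bar\mu([X_1^n])$ converges $\bar\mu$-a.s.\ to a $T$-invariant limit $h_{\bar\mu}(\cdot)$ of $\bar\mu$-expectation $h_{\bar\mu}$. The absolute continuity $\mu\ll\bar\mu$ transfers the convergence to a $\mu$-a.s.\ statement, and equation~(\ref{EqInvar}) implies that any $T$-invariant integrable function has equal $\mu$- and $\bar\mu$-means. Since $-n^{-1}\log\bar\mu([X_1^n])$ is nonnegative, Fatou's lemma therefore yields
\[
h_{\bar\mu}=\sred_\mu\kwad{h_{\bar\mu}(\cdot)}\le\liminf_{n\to\infty}\frac{1}{n}\sred_\mu\kwad{-\log\bar\mu([X_1^n])}.
\]

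It remains to show that the divergence term is sublinear, $D(\mu_n\|\bar\mu_n)/n\to 0$. The conditional Jensen inequality applied to the convex function $x\mapsto x\log x$ makes $D(\mu_n\|\bar\mu_n)=\sred_{\bar\mu}\kwad{f_n\log f_n}$ nondecreasing in $n$ and bounded above by the global divergence $D(\mu\|\bar\mu)$; in the generic case $D(\mu\|\bar\mu)<\infty$, boundedness immediately gives the vanishing of the ratio. Combining with the SMB bound via $\liminf_n(a_n-b_n)\ge\liminf_n a_n-\limsup_n b_n$ applied to the decomposition of $H_\mu(n)/n$ delivers $h_{\bar\mu}\le\liminf_n H_\mu(n)/n$. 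The delicate point---which I expect to be the main obstacle---is handling the case $D(\mu\|\bar\mu)=\infty$; there one must leverage the hypothesis $H_\mu(n)<\infty$, which via the decomposition forces $D(\mu_n\|\bar\mu_n)\le\sred_\mu\kwad{-\log\bar\mu([X_1^n])}$, together with the AMS structure to establish $D(\mu_n\|\bar\mu_n)=o(n)$.
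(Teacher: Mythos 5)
Your first step (SMB for $\bar\mu$, transfer to $\mu$ via $\mu\ll\bar\mu$, equality of $\mu$- and $\bar\mu$-expectations of the invariant limit, then Fatou) is sound and matches the first half of the paper's argument. The gap is the second half: your decomposition $H_\mu(n)=\sred_\mu\kwad{-\log\bar\mu([X_1^n])}-D(\mu_n\|\bar\mu_n)$ forces you to prove the \emph{expectation-level} statement $D(\mu_n\|\bar\mu_n)=o(n)$, and you have no argument for it outside the case $D(\mu\|\bar\mu)<\infty$, which is not among the hypotheses. Monotonicity of relative entropy only tells you $D(\mu_n\|\bar\mu_n)\uparrow D(\mu\|\bar\mu)$, which may be $+\infty$; the bound $D(\mu_n\|\bar\mu_n)\le\sred_\mu\kwad{-\log\bar\mu([X_1^n])}$ you propose for that case is useless, since the right-hand side generally grows linearly. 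Nor does the almost-sure fact $n^{-1}\log\okra{\mu([X_1^n])/\bar\mu([X_1^n])}\to 0$ (martingale convergence on $\klam{0<d\mu/d\bar\mu<\infty}$) upgrade to $n^{-1}D(\mu_n\|\bar\mu_n)\to 0$ without uniform integrability, which you cannot supply. Indeed it is conceivable that both $n^{-1}\sred_\mu\kwad{-\log\bar\mu([X_1^n])}$ and $n^{-1}D(\mu_n\|\bar\mu_n)$ have strictly positive limits with only their difference behaving well, so a term-by-term treatment may be unsalvageable.

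The paper avoids this entirely by keeping the argument pointwise until the very last step. It invokes Barron's Theorem 3, which for an AMS $\mu$ with $H_\mu(n)<\infty$ gives the almost-sure identity $\lim_n n^{-1}\log\bar\mu([x^n])=\lim_n n^{-1}\log\mu([x^n])$, and then applies Fatou's lemma \emph{to the nonnegative integrand} $-n^{-1}\log\mu([x^n])$ under $\mu$:
\begin{align*}
  h_{\bar\mu}
  =\int\lim_{n}\okra{-\tfrac{1}{n}\log\mu([x^n])}d\mu({x}^{\mathbb{Z}})
  \le\liminf_{n}\tfrac{1}{n}\sred_\mu\kwad{-\log\mu([X_1^n])}
  =\liminf_{n}\tfrac{H_\mu(n)}{n}.
\end{align*}
No control of the divergence term in expectation is ever needed: the only $L^1$ information used is SMB for the stationary $\bar\mu$, and the only information about $\mu$ versus $\bar\mu$ is the almost-sure coincidence of sample entropies. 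To repair your proof, replace the expectation-level decomposition by this pointwise substitution and move the Fatou step to the $-\log\mu$ side.
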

\begin{proof}
  By the generalized Shannon-McMillan-Breiman theorem \cite[Theorem
  2]{Barron85} and (\ref{EqInvar}), the $L^1(\bar\mu)$ convergence
  \begin{align}
    \label{BarMuAEP}
    h_{\bar\mu}
    =
    - \int \lim_{n\rightarrow\infty} \frac{\log \bar\mu([x^{n}])}{n}
    d\bar\mu({x}^{\mathbb{Z}})
    =
    - \int \lim_{n\rightarrow\infty} \frac{\log \bar\mu([x^{n}])}{n} 
    d\mu({x}^{\mathbb{Z}})
  \end{align}
  holds for the stationary measure $\bar\mu$ if
  $h_{\bar\mu}<\infty$. On the other hand, by \cite[Theorem
  3]{Barron85}, an AMS measure $\mu$ with $H_\mu(n)<\infty$ satisfies
  \begin{align}
    \label{MuAEP}
    \lim_{n\rightarrow\infty} \frac{\log \bar\mu([x^{n}])}{n}
    =
    \lim_{n\rightarrow\infty} \frac{\log \mu([x^{n}])}{n}
  \end{align}
  for $\mu$- and $\bar\mu$-almost all ${x}^{\mathbb{Z}}$. Hence the
  claim follows by the Fatou lemma.
%
\end{proof}

\begin{example}
  \label{exChampernowne}
  Using Proposition \ref{theohBarMuhMu}, we can show a~simple example
  of a~measure $\mu$ such that limits (\ref{BarMu}) exist for all
  cylinder sets but $\mu$ is not AMS. Consider the Champernowne
  sequence $b^{\mathbb{N}}=12345678910111213...$ (i.e., the
  concatenation of decimal representations of natural numbers) and put
  $\mu([b^k])=1$ for $k\in\mathbb{N}$. We have
  $\bar\mu([u])=10^{-|u|}$ since $b^{\mathbb{N}}$ is normal. If
  $\bar\mu$ is extended from these values to a~measure on
  $(\mathbb{X}^{\mathbb{Z}},\mathcal{X}^{\mathbb{Z}})$, then
  $H_{\bar\mu}(m)=m\log 10$ and $h_{\bar\mu}=\log 10$ but
  $H_\mu(i;m)=0$. Hence $\mu$ cannot be AMS.
\end{example}

Block entropies of two stationary means linked through variable-length
coding can be related as well.  The link for the entropy rate is very
simple if the expansion rate is constant and the coding function is
uniquely decodable.
\begin{proposition}[\mbox{cf.\ \cite[Theorem 1]{TimoBlackmoreHanlen2007}}]
  \label{theohBarMuhBarNu}
  Let $\mu$ be AMS on
  $(\mathbb{X}^{\mathbb{Z}},\mathcal{X}^{\mathbb{Z}})$ with
  $h_{\bar\mu}<\infty$ and $H_\mu(n)<\infty$ for all $n$ and suppose
  that the expansion rate satisfies $\bar l(\cdot)=L\in (0,\infty)$
  $\mu$-almost everywhere for a~prefix-free
  $f:\mathbb{X}\rightarrow\mathbb{Y}^*$. Then we have
  \begin{align}
    \label{hBarMuhBarNu}
    h_{\bar\nu}=L^{-1}h_{\bar\mu}
  \end{align}
  for the measure $\nu=\mu\circ \okra{f^{\mathbb{Z}}}^{-1}$ if
  $h_{\bar\nu}<\infty$.
  \\
  Remark: \emph{We have $h_{\bar\mu}<\infty$ and $H_\mu(n)<\infty$ if
    $\mu$ is stationary and $H_\mu(1)<\infty$, whereas
    $h_{\bar\nu}<\infty$ if the alphabet $\mathbb{Y}$ is finite.
    Formula (\ref{hBarMuhBarNu}) is a~special case of \cite[Theorem
    1]{TimoBlackmoreHanlen2007}, but their proof is partly flawed. It
    uses the version of the Shannon-McMillan-Breiman theorem
    \cite[Corollary 4]{GrayKieffer80}, which is true only for finite
    $\mathbb{X}$ and $\mathbb{Y}$. A~correct proof for any
    $\mathbb{X}$ and $\mathbb{Y}$, invoking the already mentioned
    Theorems 2 and 3 from \cite{Barron85}, is given below. As noticed
    in \cite[Theorem 1]{TimoBlackmoreHanlen2007}, $\lim_n n^{-1}\log
    \mu([x^{n}])$ converges almost surely to the entropy rate of the
    $x^{\mathbb{Z}}$-typical ergodic component of the measure $\mu$
    also when the expansion rate is not constant, see the ergodic
    decomposition theorems in \cite[Chapter 9 until Theorem
    9.12]{Kallenberg97}.}
\end{proposition}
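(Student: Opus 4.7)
The plan rests on a prefix-free identity
\begin{align*}
\mu([x^n]) = \nu([f^*(x^n)])
\end{align*}
valid for every $x^n\in\mathbb{X}^n$: the inclusion $\{X_1^n=x^n\}\subseteq\{Y_1^{\abs{f^*(x^n)}}=f^*(x^n)\}$ is trivial on the event $Y=f^{\mathbb{Z}}(X)$, and prefix-freeness of $f(\mathbb{X})$ allows left-to-right parsing to uniquely recover $X_1,X_2,\ldots$ from any finite codeword-aligned prefix of $Y_1^\infty$, yielding the reverse inclusion. Taking logarithms and dividing by $n$ gives
\begin{align*}
\frac{\log\mu([x^n])}{n} = \frac{\abs{f^*(x^n)}}{n}\cdot\frac{\log\nu([f^*(x^n)])}{\abs{f^*(x^n)}}.
\end{align*}
By the ergodic theorem applied to $l_1$ under the AMS $\mu$ together with the hypothesis $\bar l(\cdot)=L$, I have $\abs{f^*(x^n)}/n\to L$ both $\mu$- and $\bar\mu$-almost surely, hence also $\abs{f^*(x^n)}\to\infty$.

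Next I would invoke Theorems 2 and 3 of \cite{Barron85} twice, in the same spirit as the proof of Proposition \ref{theohBarMuhMu}. Applied to the AMS $\mu$ with $h_{\bar\mu}<\infty$ and $H_\mu(n)<\infty$, these theorems produce a shift-invariant limit $g_{\bar\mu}$ such that $n^{-1}\log\mu([x^n])\to g_{\bar\mu}(x^{\mathbb{Z}})$ $\mu$-a.s., with $-\int g_{\bar\mu}\,d\bar\mu=h_{\bar\mu}$; by the invariance property (\ref{EqInvar}) the same integral equals $-\int g_{\bar\mu}\,d\mu$. Applied to the AMS $\nu$ (which is AMS by Proposition \ref{theoBarNu}) with $h_{\bar\nu}<\infty$, they produce analogously $g_{\bar\nu}$ with $m^{-1}\log\nu([y^m])\to g_{\bar\nu}(y^{\mathbb{Z}})$ $\nu$-a.s.\ and $-\int g_{\bar\nu}\,d\nu=h_{\bar\nu}$. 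Combining the three almost sure convergences with the displayed product identity along the subsequence $m=\abs{f^*(x^n)}\to\infty$ yields
\begin{align*}
g_{\bar\mu}(x^{\mathbb{Z}}) = L\cdot g_{\bar\nu}\okra{f^{\mathbb{Z}}(x^{\mathbb{Z}})}
\quad\mu\text{-a.s.}
\end{align*}
Integrating against $\mu$ and using $\int g_{\bar\nu}\circ f^{\mathbb{Z}}\,d\mu=\int g_{\bar\nu}\,d\nu$ gives the desired identity $h_{\bar\mu}=L\cdot h_{\bar\nu}$, i.e., (\ref{hBarMuhBarNu}).

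The main obstacle is the auxiliary requirement $H_\nu(n)<\infty$ needed for the second application of Barron's Theorem 3. When $\mathbb{Y}$ is finite (the case highlighted in the Remark), the trivial bound $H_\nu(n)\le n\log\abs{\mathbb{Y}}$ dispenses with this issue; for a general $\mathbb{Y}$ one would have to derive $H_\nu(n)<\infty$ from $H_\mu(n)<\infty$ and the prefix-free structure, exploiting that each $y\in\mathbb{Y}^n$ arises from at most finitely many $x^k$ with $k\le n$ under $f^*$, so that a grouping-plus-Jensen argument bounds $H_\nu(n)$ in terms of the $H_\mu(k)$ for $k\le n$. A secondary subtlety is interchanging the a.s.\ limit with integration; this is absorbed by the $L^1(\bar\mu)$ and $L^1(\bar\nu)$ convergence supplied by the $L^1$ version of the generalized Shannon--McMillan--Breiman theorem and by the invariance (\ref{EqInvar}) that lets one replace $\bar\mu,\bar\nu$ by $\mu,\nu$ on the $T$-invariant integrands $g_{\bar\mu},g_{\bar\nu}$.
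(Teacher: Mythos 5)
Your proposal is essentially the paper's own proof: the same identity $\mu([x^{n}])=\nu([f^*(x^{n})])$ from injectivity of $f^*$, the same double application of Barron's Theorems 2 and 3 combined with $\abs{f^*(x^n)}/n\to \bar l(\cdot)=L$, and the same change of variables under the integral. The one point you flag as an obstacle, $H_\nu(n)<\infty$ for infinite $\mathbb{Y}$, needs no grouping-plus-Jensen argument: prefix-freeness forces $\abs{f^*(x^n)}\ge n$, so $Y^n$ is a deterministic function of $X^n$ and hence $H_\nu(n)\le H_\mu(n)<\infty$, which is exactly how the paper dispatches it.
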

\begin{proof}
  The measure $\nu=\mu\circ \okra{f^{\mathbb{Z}}}^{-1}$ is AMS by Proposition
  \ref{theoBarNu}, whereas the extension $f^*$ is an injection for
  a~prefix-free $f$. Hence $\mu([x^{n}])=\nu([f^*(x^{n})])$
  and $\abs{f^*(x)}\ge\abs{x}$. As a~result, $H_\nu(n)\le H_\mu(n)$,
  whereas (\ref{BarMuAEP}), (\ref{MuAEP}), and their analogues for
  $\nu$ imply
 \begin{align*}
   h_{\bar\nu}
   &=
   - \int 
   \kwad{\lim_{n\rightarrow\infty} n^{-1}\log \nu([y^{n}])}
   d\nu({y}^{\mathbb{Z}})
   \\
   &=
   - \int 
   \kwad{\lim_{n\rightarrow\infty} \abs{f^*(x^{n})}^{-1}\log \mu([x^{n}])}
   d\mu({x}^{\mathbb{Z}})
   \\
   &=
   - \int
   \kwad{l({x}^{\mathbb{Z}})}^{-1}
   \kwad{\lim_{n\rightarrow\infty} n^{-1}\log \mu([x^{n}])}
   d\mu({x}^{\mathbb{Z}})
   =
   L^{-1}h_{\bar\mu}
   .
  \end{align*}
\end{proof}

In the following, we wish to obtain some bounds for $H_{\bar\nu}(n)$
in terms of $H_{\bar\mu}(n)$ for finite $n$. We shall observe that
formula (\ref{QuasiMean}) can be interpreted in terms of random
variables if $\abs{f(x)}>0$ for all $x\in\mathbb{X}$. To simplify the
notation, we shall assume here that
$\bar\mu=\mu:=P((X_i)_{i\in\mathbb{Z}}\in\cdot)$ is the measure of
a~stationary process $(X_i)_{i\in\mathbb{Z}}$. Then $\rho=\bar P((\bar
Y_i)_{i\in\mathbb{Z}}\in\cdot)$ is the stationary distribution of
\begin{align}
  \label{DefBarY}
  (\bar Y_i)_{i\in\mathbb{Z}}=T^{N} f^{\mathbb{Z}}((\bar X_i)_{i\in\mathbb{Z}})
  ,
\end{align}
where the random shift $N$ and the nonstationary process $(\bar
X_i)_{i\in\mathbb{Z}}$ are conditionally independent given $\bar X_1$,
their distribution being
\begin{align}
  \label{DefBarX}
  \bar P(\bar X_{k}^{l}=x_{k}^{l})
  &= 
  P(X_{k}^{l}=x_{k}^{l})\cdot \frac{\abs{f(x_1)}}{L} 
  ,
  &
  k&\le 1\le l,
  \\
  \label{DefN}
  \bar P(N=n|\bar X_{1}=x_{1})
  &=
  \frac{
    \boole{0\le n\le \abs{f(x_1)}-1}
  }{
    \abs{f(x_1)}
  }
  ,
  &
  n&\in\mathbb{N}\cup\klam{0}
  .
\end{align}
Suppose that $\bar l(\cdot)=L$ holds $\mu$-almost surely. As shown in
Proposition \ref{theoBarNuRho}, this guarantees that $\rho=\bar\nu$ for the AMS
measure $\nu:=P((Y_i)_{i\in\mathbb{Z}}\in\cdot)$ of the expanded
process (\ref{DefY}).  Thus we have 
\begin{align}
  H_{\bar\nu}(n)=H_{\bar P}(\bar Y_k^{k+n-1})
  \text{ and }
  H_{\bar\mu}(n)=H_P(X_k^{k+n-1})
  ,
\end{align}
where $H_P(U):=\sred_P \kwad{-\log P(U=\cdot)}$ is the entropy of
a~discrete variable $U$.

Denote the conditional entropy $H_P(U|V):=H_P(U,V)-H_P(V)$ and covariance
$\cov_P(U,V):=\sred_P (UV)-\sred_P U\sred_P V$. Entropies of blocks
drawn from the above introduced processes can be linked easily when
blocks of random length are allowed.
\begin{proposition}
  \label{theoHBarNu}
  Suppose that a~process $(X_i)_{i\in\mathbb{Z}}$ is stationary and
  $L=\sred_P\abs{f(X_i)}<\infty$ for a~prefix-free
  $f:\mathbb{X}\rightarrow \mathbb{Y}^*$. Consider then processes
  $(Y_i)_{i\in\mathbb{Z}}$, $(\bar X_i)_{i\in\mathbb{Z}}$, and $(\bar
  Y_i)_{i\in\mathbb{Z}}$ that satisfy (\ref{DefY}), (\ref{DefBarY}),
  (\ref{DefBarX}), and (\ref{DefN}). Put also $M_n:=\sum_{i=1}^n
  \abs{f(X_i)}$, $\bar M_n:=\sum_{i=1}^n \abs{f(\bar X_i)}$, and
  $\eta:= \sred_P \kwad{\frac{\abs{f(X_1)}}{L} \log
    \frac{\abs{f(X_1)}}{L}}\ge 0$.  Then we have
  \begin{enumerate}
  \item 
    $H_P(Y^{M_n})=H_P(X^{n})$,
  \item $H_{\bar P}(\bar X_{k}^{l}) \ge H_P(X_{k}^{l}) - \eta$ if and
    only if $\cov_P(\abs{f(X_1)},-\log P(X_{k}^{l}=\cdot))\ge 0$ for
    $k\le 1\le l$,
  \item 
    $H_{\bar P}(N|\bar X_{k}^{l}) = \log L + \eta$, $k\le 1\le l$,  whereas
  \item 
    $H_{\bar P}(\bar Y^{\bar M_n-\bar M_1},N) \le H_{\bar P}(\bar X^{n}, N)$  
    and  $H_{\bar P}(\bar X_{2}^{n},N) \le H_{\bar P}(\bar Y^{\bar M_{n}}, N)$.
  \end{enumerate}
\end{proposition}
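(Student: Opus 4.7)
The plan is to prove the four items separately, since each rests on a different structural ingredient: (a) the length-biased identity $\bar P(\bar X_k^l = x_k^l) = P(X_k^l = x_k^l)\,\abs{f(x_1)}/L$ for $k\le 1\le l$; (b) the conditional uniformity of $N$ given $\bar X_1=x_1$ on $\klam{0,\ldots,\abs{f(x_1)}-1}$, together with $N\ind(\bar X_i)_{i\ne 1}\mid \bar X_1$; and (c) the prefix-free property of $f$, which permits unique forward parsing once a codeword boundary is located. Item (i) is immediate: prefix-freeness makes $f^\ast$ an injection on $\mathbb{X}^\ast$ and $Y^{M_n}=f^\ast(X^n)$, so $X^n\leftrightarrow Y^{M_n}$ is a bijection. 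For (ii) I substitute (a) into $H_{\bar P}(\bar X_k^l)=\sred_{\bar P}\kwad{-\log \bar P(\bar X_k^l=\cdot)}$ and split the logarithm; the length-biasing term evaluates to $-\eta$, while the remaining piece $L^{-1}\sred_P\kwad{\abs{f(X_1)}\cdot(-\log P(X_k^l=\cdot))}$ decomposes, via $\sred_P\abs{f(X_1)}=L$, into $H_P(X_k^l)+L^{-1}\cov_P(\abs{f(X_1)},-\log P(X_k^l=\cdot))$, from which the iff drops out. For (iii) the independence in (b) gives $H_{\bar P}(N\mid\bar X_k^l)=H_{\bar P}(N\mid\bar X_1)$, uniformity gives $H_{\bar P}(N\mid\bar X_1=x_1)=\log\abs{f(x_1)}$, and averaging under $\bar P$ yields $\sred_P\kwad{(\abs{f(X_1)}/L)\log\abs{f(X_1)}}=\log L+\eta$.

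For (iv.1) a data-processing bound suffices: $\bar Y^{\bar M_n-\bar M_1}$ reads positions $N+1,\ldots,N+\bar M_n-\bar M_1$ of $f^{\mathbb{Z}}(\bar X)$, and because $N\le\abs{f(\bar X_1)}-1$ the top index is $\le\bar M_n-1$, so every accessed symbol lies inside $f(\bar X_1)\cdots f(\bar X_n)$, making $(\bar Y^{\bar M_n-\bar M_1},N)$ a deterministic function of $(\bar X^n,N)$. For (iv.2), fixing $(\bar X_1,N)$, the prefix-free parse of $\bar Y^{\bar M_n}$ starting from position $\abs{f(\bar X_1)}-N+1$ uniquely recovers $\bar X_2,\ldots,\bar X_n$ and the first $N$ symbols of $f(\bar X_{n+1})$, which yields the bijection
\[
(\bar Y^{\bar M_n},N,\bar X_1)\;\longleftrightarrow\;(\bar X_1,\bar X_2^n,[f(\bar X_{n+1})]_1^N,N).
\]
Equating the corresponding joint entropies and applying the chain rule on each side, the target inequality reduces to
\[
H_{\bar P}(\bar X_1\mid\bar Y^{\bar M_n},N)\;\le\;H_{\bar P}(\bar X_1,[f(\bar X_{n+1})]_1^N\mid\bar X_2^n,N).
\]

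The main obstacle is precisely this last inequality. Because $\bar X_1$ need not be a measurable function of $(\bar Y^{\bar M_n},N)$ for a prefix-free code---distinct initial codewords can be compatible with the same observed suffix together with a valid prefix-free parse of the middle---a direct data-processing argument is unavailable, and the extra information has to be routed through $[f(\bar X_{n+1})]_1^N$ via the bijection. I would close the proof by using that $\bar X_2^n$ is a function of $(\bar Y^{\bar M_n},N,\bar X_1)$, which supplies the identity $H_{\bar P}(\bar X_1\mid\bar Y^{\bar M_n},N)=H_{\bar P}(\bar X_2^n\mid\bar Y^{\bar M_n},N)+H_{\bar P}(\bar X_1\mid\bar X_2^n,\bar Y^{\bar M_n},N)$, and then comparing each summand against the chain-rule expansion of the right-hand side.
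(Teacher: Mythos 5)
Items (i)--(iii) and the first inequality of (iv) are correct and follow essentially the paper's own route: (i) is injectivity of $f^*$ for prefix-free $f$; (ii) is the computation $H_{\bar P}(\bar X_{k}^{l})+\eta=\sred_P\kwad{-\tfrac{\abs{f(X_1)}}{L}\log P(X_{k}^{l}=\cdot)}=H_P(X_{k}^{l})+L^{-1}\cov_P\okra{\abs{f(X_1)},-\log P(X_{k}^{l}=\cdot)}$, which gives the ``if and only if''; (iii) is the conditional uniformity of $N$ given $\bar X_1$ averaged under the length-biased law; and the first half of (iv) is data processing, since $(\bar Y^{\bar M_n-\bar M_1},N)$ is a function of $(\bar X^{n},N)$.

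The unresolved point is the second inequality of (iv), and you have located the difficulty correctly but not closed it. The paper's proof of this step is the single claim that $\bar X_{2}^{n}$ is a function of $(\bar Y^{\bar M_{n}},N)$, followed by data processing. Your suspicion that this parsing claim can fail for a general prefix-free $f$ is in fact justified: take $f(a)=00$, $f(b)=01$, $f(c)=11$, $f(d)=100$, $f(e)=101$ (prefix-free, even complete) and $N=1$; then $(\bar X_1,\bar X_2,\bar X_3)=(a,a,e)$ and $(\bar X_1,\bar X_2,\bar X_3)=(d,b,b)$ both yield $\bar M_3=7$ and $\bar Y^{7}=000101y$ with $y$ the first symbol of $f(\bar X_4)$, yet $\bar X_2^3$ differs. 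So the paper's one-liner needs a supplementary hypothesis, and your replacement --- the bijection $(\bar Y^{\bar M_n},N,\bar X_1)\leftrightarrow(\bar X_1,\bar X_2^n,p,N)$ with $p$ the first $N$ output symbols after position $\bar M_n$, reduced via the chain rule to $H_{\bar P}(\bar X_1\mid\bar Y^{\bar M_n},N)\le H_{\bar P}(\bar X_1,p\mid\bar X_2^n,N)$ --- is left unproved; conditioning-reduces-entropy bounds the two sides in incompatible directions, and I do not see how to finish it in general. What does hold, and suffices for every use of the proposition in the paper, is the parsing claim under the extra assumption that $\abs{f(\bar X_1)}$ is recoverable from $(\bar Y^{\bar M_n},N)$: then the boundary between the residue of $f(\bar X_1)$ and $f^*(\bar X_2^n)$ inside $\bar Y^{\bar M_n}$ is known and prefix-freeness gives a unique forward parse of $\bar X_2^n$. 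This covers the fixed-length codes of Corollary \ref{theoHBarNuFixed} (where $\abs{f(x)}=L$ is constant) and comma-separated codes such as (\ref{ConjCode}) (the first occurrence of the separator locates the boundary). You should either add such a hypothesis or supply the missing entropy inequality; as it stands, part (iv.2) of your argument is a genuine gap.
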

\begin{proof} In (i) and (iv), we use that $f^*$ is an
  injection for a~prefix-free $f$.
  \begin{enumerate}
  \item The claim is true since $Y^{M_n}=f^*(X^{n})$ and
    $X^{n}=\okra{f^*}^{-1}(Y^{M_n})$.
  \item Whenever $\cov_P(\abs{f(X_1)},-\log P(X_{k}^{l}=\cdot))\ge 0$,
    we observe
    \begin{align*}
      H_{\bar P}(\bar X_{k}^{l})
      +
      \eta
      &=
      \textstyle
      \sred_P \kwad{-\frac{\abs{f(X_1)}}{L}\log P(X_{k}^{l}=\cdot)}
      \\
      &\ge 
      \textstyle
      \sred_P \kwad{\frac{\abs{f(X_1)}}{L}}
      \sred_P \kwad{-\log P(X_{k}^{l}=\cdot)}
      =H_P(X_{k}^{l})
      .
    \end{align*}
  \item By (\ref{DefN}).
  \item By (\ref{DefBarY}), the string $\bar Y^{\bar M_n-\bar M_1}$ is
    a~function of $\bar X^{n}$ and $N$, whereas string $\bar
    X_{2}^{n}$ is a~function of $\bar Y^{\bar M_{n}}$ and $N$. Hence
    the claimed inequalities follow.
  \end{enumerate}
\end{proof}

\begin{corollary}
  \label{theoHBarNuFixed}
  Let $\mu$ be AMS on
  $(\mathbb{X}^{\mathbb{Z}},\mathcal{X}^{\mathbb{Z}})$ and
  $f:\mathbb{X}\rightarrow\mathbb{Y}^L$ be a~prefix-free fixed-length
  coding. Then we have
  \begin{align}
    \abs{H_{\bar\nu}(nL)-H_{\bar\mu}(n)}
    \le H_{\bar\mu}(2) + \log L
  \end{align}
  for the measure $\nu=\mu\circ \okra{f^{\mathbb{Z}}}^{-1}$.  
\end{corollary}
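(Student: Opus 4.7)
The plan is to reduce to Proposition \ref{theoHBarNu}(iv) after normalising $\mu$ to a stationary measure. Corollary \ref{theoBarBarII} applied with $\tau=\bar\mu$ gives $\overline{\mu\circ\okra{f^{\mathbb{Z}}}^{-1}}=\overline{\bar\mu\circ\okra{f^{\mathbb{Z}}}^{-1}}$, so both $H_{\bar\mu}(n)$ and $H_{\bar\nu}(nL)$ depend only on the stationary mean $\bar\mu$ and not on $\mu$ itself; therefore I may assume from the outset that $\mu=\bar\mu$ is stationary. Under this assumption, $\abs{f(x)}\equiv L$ gives $\bar l(\cdot)=L$ almost everywhere, and Proposition \ref{theoBarNuRho} identifies $\bar\nu$ with the stationary distribution $\rho$ of the process $(\bar Y_i)_{i\in\mathbb{Z}}$ constructed in (\ref{DefBarY})--(\ref{DefN}).

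Specialising the setup to a constant codeword length brings several simplifications: the weight $\abs{f(X_1)}/L$ in (\ref{DefBarX}) equals $1$, so $(\bar X_i)$ has the same joint distribution as $(X_i)$; the quantity $\eta$ in Proposition \ref{theoHBarNu}(iii) vanishes; and by (\ref{DefN}), $N$ is uniform on $\klam{0,\dots,L-1\}$ and independent of $(\bar X_i)$. Consequently $\bar M_n=nL$ is deterministic, $H_{\bar P}(N\mid\bar X_k^l)=\log L$ for every $k\le 1\le l$, and on the coded side one has the trivial sandwich $H_{\bar P}(\bar Y^{m})\le H_{\bar P}(\bar Y^{m},N)\le H_{\bar P}(\bar Y^{m})+\log L$, since $N$ takes at most $L$ values.

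Plugging $\bar M_n-\bar M_1=(n-1)L$ and $\bar M_n=nL$ into the two inequalities of Proposition \ref{theoHBarNu}(iv) I obtain
\begin{align*}
  H_{\bar\nu}((n-1)L)
  &\le H_{\bar P}(\bar Y^{(n-1)L},N)
  \le H_{\bar P}(\bar X^{n},N)
  = H_{\bar\mu}(n)+\log L,
  \\
  H_{\bar\mu}(n-1)+\log L
  &= H_{\bar P}(\bar X_2^{n},N)
  \le H_{\bar P}(\bar Y^{nL},N)
  \le H_{\bar\nu}(nL)+\log L.
\end{align*}
Shifting $n\mapsto n+1$ in the first chain and rearranging the second gives $H_{\bar\nu}(nL)\le H_{\bar\mu}(n+1)+\log L$ and $H_{\bar\mu}(n-1)\le H_{\bar\nu}(nL)$. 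Subadditivity of block entropy for the stationary measure $\bar\mu$ yields $H_{\bar\mu}(n+1)-H_{\bar\mu}(n)\le H_{\bar\mu}(1)$ and $H_{\bar\mu}(n)-H_{\bar\mu}(n-1)\le H_{\bar\mu}(1)$, and since $H_{\bar\mu}(1)\le H_{\bar\mu}(2)$, the two one-sided estimates combine into the desired $\abs{H_{\bar\nu}(nL)-H_{\bar\mu}(n)}\le H_{\bar\mu}(2)+\log L$.

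The only non-routine step is the first one: recognising that the AMS case reduces to the stationary one without loss and that the constant expansion rate forces $\bar\nu=\rho$, so that the machinery of Proposition \ref{theoHBarNu} becomes applicable. Handling $H_{\bar P}(N\mid\bar Y^m)$ is sidestepped by the crude two-sided bound on $H_{\bar P}(\bar Y^m,N)$; everything after that is bookkeeping with entropy inequalities.
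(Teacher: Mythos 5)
Your proof is correct and follows essentially the same route as the paper: reduce to the stationary case via Corollary \ref{theoBarBarII}, invoke the random-shift construction and Proposition \ref{theoHBarNu}(iv) with $\bar M_n=nL$, and finish with entropy bookkeeping. The only (harmless) difference is in that bookkeeping: you exploit the independence of $N$ from $(\bar X_i)$ to write $H_{\bar P}(\bar X^n,N)=H_{\bar\mu}(n)+\log L$ and then shift indices using subadditivity of $H_{\bar\mu}$, whereas the paper splits off $H_{\bar P}(\bar X^2,N)$ directly; both yield the stated bound.
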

\begin{proof}
  In view of Corollary \ref{theoBarBarII}, we may assume without loss
  of generality that $\mu$ is stationary. Let us repeat the
  construction of processes that precedes Proposition
  \ref{theoHBarNu}. Observe that the processes $\bar X_{k}^{l}$ and
  $X_{k}^{l}$ share the same distribution and $\bar M_n=nL$. Thus
  Proposition \ref{theoHBarNu}(iv) yields $H_{\bar P}(\bar Y^{(n-1)L})
  \le H_{\bar P}(\bar X^{n}, N) \le H_P(X^{n-1})+H_{\bar P}(\bar
  X^2,N)$ and $H_P(X^{n-1})=H_{\bar P}(\bar X_{2}^{n}) \le H_{\bar
    P}(\bar Y^{Ln}, N)\le H_{\bar P}(\bar Y^{L(n-1)})+ H_{\bar P}(\bar
  Y^{L},N)\le H_{\bar P}(\bar Y^{L(n-1)})+ H_{\bar P}(\bar X^2,N)$. To
  complete the proof, notice that $H_{\bar P}(\bar
  X^2,N)=H_P(X^2)+\log L$.
\end{proof}

\section{Encoding of the process $X_i=(K_i,Z_{K_i})$}
\label{secLinguistic}

We have not managed to produce an analogue of Corollary
\ref{theoHBarNuFixed} for the processes discussed in Proposition
\ref{theoConj}. But, as shown in \cite{Debowski08c}, then we have 
\begin{align}
  H_{P}(X^n)&
  \ge h_{\mu} n+\kwad{\log 2 -\eta(\delta)} \cdot \abs{U_\delta(n)}
  ,
  \\
  H_{\bar P}(\bar Y^m)&\ge 
  h_{\bar\nu} m+
  \kwad{\log 2 -\eta({\bar\delta})} \cdot \abs{\bar U_{\bar\delta}(m)}
  ,
\end{align}
where $\nu:=\mu\circ \okra{f^{\mathbb{Z}}}^{-1}$ and $\eta(p) :=
-p\log p-(1-p)\log (1-p)$. Whereas $h_{\bar\nu}=L^{-1}h_{\mu}$ by
Proposition \ref{theohBarMuhBarNu}, point (d) of the proof below
demonstrates that $\bar U_{\bar\delta}(m) \supset U_{\delta}(n)$ for
$\delta>{\bar\delta}/a$,
$n=\floor{(\delta-{\bar\delta}/a)L^{-1}(m-C_a)}$,
$a\in({\bar\delta},1)$, and a~certain constant $C_a$.


\begin{proof*}{the Proposition \ref{theoConj}}
  \begin{itemize}
  \item[(a)-(b)] Process $(\abs{f(X_i)})_{i\in\mathbb{Z}}$ is
    ergodic. Thus the expansion rate equals its expectation almost
    surely:
    \begin{align*}
      \bar l((X_i)_{i\in\mathbb{Z}})
      =
      L= \sred_P \abs{f(X_i)} 
      =
      \sum_{k=1}^\infty (\floor{\log_2 k}+2)
      \frac{k^{-\alpha}}{\zeta(\alpha)}  
      \in (0,\infty)
      .
    \end{align*}
    Hence the stationary mean $\overline{\mu\circ
      \okra{f^{\mathbb{Z}}}^{-1}}$ exists by Proposition
    \ref{theoBarNu} and constitutes a~measure over a~finite alphabet.
    
  \item[(c)] Consider the process
    $(Y_i)_{i\in\mathbb{Z}}=f^{\mathbb{Z}}((X_i)_{i\in\mathbb{Z}})$
    and the probability kernel $\tau(\cdot,\omega)=
    P((Y_i)_{i\in\mathbb{Z}}\in\cdot||(Z_k)_{k\in\mathbb{N}})(\omega)$.
    For $\zeta(\alpha)>4$ and $P$-almost all $\omega$,
    $\tau(\cdot,\omega)$ takes form of the measure $\nu$ considered in
    Corollary \ref{theogFEMuFENuInf}.  Hence the distribution of the
    process $(Y_i)_{i\in\mathbb{Z}}$ has finite energy by Proposition
    \ref{theoFENuInt} and, consequently, $(\bar Y_i)_{i\in\mathbb{Z}}$
    has finite energy by Proposition \ref{theoFENuBar}.

  \item[(d)] Define $\bar Z_k:=\bar s_k((\bar Y_i)_{i\in\mathbb{Z}})$.
    The functions $\bar s_k$ are shift-invariant.  Notice that
    $Z_k=\bar s_k((Y_i)_{i\in\mathbb{Z}})$ almost surely on the space
    $(\Omega,\mathcal{J},P)$. Hence, by Eq. (\ref{EqInvar}) applied to
    the AMS measure $\mu\circ \okra{f^{\mathbb{Z}}}^{-1}$, the process
    $(\bar Z_k)_{k\in\mathbb{N}}$ also consists of independent
    equidistributed binary variables measurable against the
    shift-invariant $\sigma$-field of $(\bar Y_i)_{i\in\mathbb{Z}}$.

    Repeat the construction of processes that precedes Proposition
    \ref{theoHBarNu}, putting $M_n:=\sum_{i=1}^n \abs{f(X_i)}$ and
    $\bar M_n:=\sum_{i=1}^n \abs{f(\bar X_i)}$. Recalling that
    $\sred_P M_1=L$, fix such a~$C_a>0$ that
    \begin{align*}
      \sred_P \kwad{M_1 \boole{M_1\le C_a}}\ge a L 
    \end{align*}
    for some $a\in({\bar\delta},1)$.  Observe that  $\bar s_k(\bar
    Y^m)=z$ if $s_k(\bar
    X_2^n)=z\in\klam{0,1}$ and $\bar M_n\le m$. Hence,
    \begin{align*}
      \bar P\okra{\bar s_{k}\okra{\bar Y^m}=\bar Z_k} 
      \ge \bar P\okra{s_{k}\okra{\bar X_2^n}=\bar Z_k, 
        \bar M_1\le C_a, \bar M_n-\bar M_1\le m-C_a}
      .
    \end{align*}
    The event on the right-hand side is measurable $(\bar
    X_i)_{i\in\mathbb{Z}}$ since $\bar Z_k=s_k((\bar
    X_i)_{i\in\mathbb{Z}})$. On the other hand,
    $s_k((X_i)_{i\in\mathbb{Z}})=Z_k$. Thus by (\ref{DefBarX}) and
    further by the independence of the variable $M_1$ from
    $(X_2^n,Z_k)$ we obtain
    \begin{align*}
      &\bar P\okra{s_{k}\okra{\bar X_2^n}=\bar Z_k, 
        \bar M_1\le C_a, \bar M_n-\bar M_1\le m-C_a}
      \\
      &\qquad =
      L^{-1}\sred_P \kwad{ M_1
        \boole{s_{k}\okra{X_2^n}=Z_k, M_1\le C_a, M_n-M_1\le m-C_a} }
      \\
      &\qquad =
      L^{-1}\sred_P \kwad{ M_1 \boole{M_1\le C_a} }
      \sred_P \kwad{ \boole{s_{k}\okra{X_2^n}=Z_k, M_n-M_1\le m-C_a} }
      .
    \end{align*}
    But $(X_i)_{i\in\mathbb{Z}}$ is stationary, so the last expression
    yields simply
    \begin{align*}
      \bar P\okra{\bar s_{k}\okra{\bar Y^m}=\bar Z_k} \ge 
      a
      P\okra{s_{k}\okra{X^{n}}=Z_k, M_{n}\le m-C_a} 
      .
    \end{align*}

    Now, by $P(A\cap B)\ge P(A)-P(B^c)$ and by the Markov
    inequality,
    \begin{align*}
      &P\okra{s_{k}\okra{X^{n}}=Z_k, M_{n}\le m-C_a} 
      \\
      &\qquad \ge
      P\okra{s_{k}\okra{X^{n}}=Z_k}-P\okra{M_{n}> m-C_a}
      \\
      &\qquad \ge
      P\okra{s_{k}\okra{X^{n}}=Z_k}-\frac{Ln}{m-C_a}
      .
    \end{align*}
    Taking $\delta>{\bar\delta}/a$ and
    $n=\floor{(\delta-{\bar\delta}/a)L^{-1}(m-C_a)}$, we obtain
    \begin{align*}
      k\in \bar U_{\bar\delta}(m) 
      &\impliedby 
       a
       \okra{P\okra{s_{k}\okra{X^{n}}=Z_k}
         -(\delta-{\bar\delta}/a)}\ge {\bar\delta}
      \\
      &\iff
      P\okra{s_{k}\okra{X^{n}}=Z_k}\ge \delta
      \iff k\in U_{\delta}(n), 
    \end{align*}
    so (\ref{UDPPowerLawY}) follows for $\beta=\alpha^{-1}$
    from (\ref{UDPPowerLawX}).
  \end{itemize}
\end{proof*}

\section*{Acknowledgements}

The author thanks Peter Harremo\"es, Peter Gr\"unwald, Jan Mielniczuk,
and an anonymous referee for remarks that helped to improve the
quality of this paper. The manuscript was completed during the
author's leave from the Institute of Computer Science, Polish Academy
of Sciences, whereas the research was supported by the grant no.\
1/P03A/045/28 of the Polish Ministry of Scientific Research and
Information Technology and
under the PASCAL II Network of Excellence, IST-2002-506778.

\bibliographystyle{abbrvnat}

\bibliography{0-journals-abbrv,0-publishers-abbrv,ai,ql,mine,tcs,books,nlp}

\end{document}